\documentclass[12pt]{amsart}

\usepackage{tcolorbox}
\usepackage{xcolor}

\usepackage{pdfsync}
\usepackage{geometry}
\geometry{letterpaper} 
\topmargin -0.5in
\oddsidemargin 0in
\evensidemargin 0in
\textheight 9in
\textwidth 6.5in
\headheight 0.25in
\headsep 0.25in

\usepackage{lscape}
\usepackage{graphicx}
\usepackage{epstopdf}    
\DeclareGraphicsRule{.tif}{png}{.png}{`convert #1 `dirname #1`/`basename #1 .tif`.png}

\usepackage[all]{xy}
\usepackage{amsmath,amsthm,amssymb,enumerate}
\usepackage{latexsym}
\usepackage{amscd}
\usepackage{amssymb}

%
%

%

\numberwithin{equation}{section}

\theoremstyle{plain}  
\newtheorem{thm}[equation]{Theorem}
\newtheorem{prop}[equation]{Proposition}

\newtheorem{lemma}[equation]{Lemma}

\theoremstyle{definition}  
\newtheorem{defn}[equation]{Definition}

\newtheorem{remark}[equation]{Remark}

\newtheorem{fact}[equation]{Fact}

\newtheorem{summary}[equation]{Summary}
\newtheorem{convention}[equation]{Conventions}
\newtheorem{relation}[equation]{Relation}


\newcommand{\ep}{\epsilon}

\newcommand{\ra}{\rightarrow}

\newcommand{\lra}{\longrightarrow}


\newcommand{\PP}{\mathbb P}

\newcommand{\pntr}{\hat{p}}
\newcommand{\Pntr}{\hat{P}}

\newcommand{\Z}{\mathbb Z}
\newcommand{\C}{\mathbb C}

\newcommand{\cp}{\C \PP^\infty}
\newcommand{\ee}{ER(2)}
\newcommand{\ww}{w_{I,\ep}}


\newcommand{\Zp}{\Z_{(2)}}
\newcommand{\Zq}{\Z/(2)}


\newcommand{\chat}{\hat{c}}
\newcommand{\uhat}{\hat{u}}
\newcommand{\vhat}{\hat{v}}

 \newcommand{\voe}{v_2^{o/e}}

 \newcommand{\Smash} {\wedge}

 \newcommand{\iso} {\cong}

\newcommand{\A}{{\bf property A}}

\setlength\parindent{0pt}
\setlength\parskip{10pt}

\thanks{
The first author was supported in part by the NSF through grant DMS 1307875.
}

\thanks{
The third author thanks Dimitri Ara and the 
Institute of Mathematics of Marseille at the
University of Aix-Marseille for their hospitality 
during the writing of this paper.
}

\subjclass[2010]{55N20,55N91,55P20,55T25}

\begin{document}

\title{The $ER(2)$-cohomology of ${\sf X}^n \cp$ and $BU(n)$}

\author{Nitu Kitchloo}
\address{Department of Mathematics, Johns Hopkins University, 
Baltimore, Maryland, USA}
\email{nitu@math.jhu.edu}
\author{Vitaly Lorman}
\address{Department of Mathematics, University of Rochester, 
Rochester, New York, USA}
\email{vlorman@ur.rochester.edu}
\author{W. Stephen Wilson}
\address{Department of Mathematics, Johns Hopkins University, 
Baltimore, Maryland, USA}
\email{wsw@math.jhu.edu}

\begin{abstract}
We continue the development of the computability of the second real
Johnson-Wilson theory.
As
$\ee$ is not complex orientable, this gives some difficulty even
with basic spaces.  
In this paper we compute the second real Johnson-Wilson theory
for products of infinite complex projective spaces and for the
classifying spaces for the unitary groups.
\end{abstract}

\date{\today}

\maketitle

\section{Introduction}

The $p=2$ Johnson-Wilson theory, \cite[Remark 5.13]{JW2}, $E(n)$,
has coefficients
\[
E(n)^* \iso \Zp[v_1,v_2,\ldots,v_n^{\pm 1}]
\]
with the degree of $v_k$ equal to $-2(2^k-1)$.
There is a $\Zq$ action on $E(n)$ coming from complex conjugation.
The real Johnson-Wilson theory, $ER(n)$, is the homotopy fixed
points of $E(n)$.  This was initially studied by Hu and Kriz
in \cite{HK}.  Since then the theories have been studied intensively
and applied to the problem of non-immersions of real projective space.
(\cite{NituER2,NituP,NituP2,Nitufib,Kitch-Wil-split,Kitch-Wil-BO,
Kitch-Lor-Wil-ERn,Kitch-Lor-Wil-CPn,Kitch-Lor-Wil-Mult,Vitaly,Romie})

The first theory, $ER(1)$,
is just $KO_{(2)}$, and
it was a decades long process of computing the details of
the $KO$-(co)homology of $\cp$, finally ending in \cite{Yama3}.
The second theory, $ER(2)$, is, by \cite{HM},
closely related to $TMF_0(3)$ (the same after a suitable completion).  
The second author computed all $ER(n)^*(\cp)$ in complete detail, \cite{Vitaly}.
This is
already
much more than has been done with $TMF_0(3)$.

The fibre of the inclusion, $ER(n) \lra E(n)$ is $\Sigma^{2(2^n-1)^2-1}ER(n)$
from \cite{Nitufib}.  This gives a Bockstein spectral sequence
from $E(n)^*(X)$ to $ER(n)^*(X)$.
In this paper we are concerned with $ER(2)$, so we have the map $x:\Sigma^{17}ER(2) \ra
ER(2)$.  This map has $2x = 0 = x^7$.  
The resulting Bockstein spectral sequence just measures $x^i$-torsion.
We use the untruncated version. 
That just means that $d_1$ detects all of the $x^1$-torsion generators and $E_2$ is
what is left after you throw them all away.  In our cases, we only have $d_1$,
$d_3$, and $d_7$, so $E_2 = E_3$.  When we compute $d_3$, it gives us all the
$x^3$-torsion, but then we throw it all away to get our $E_4 = E_5 = E_6  = E_7$.
Our $d_7$ gives the $x^7$-torsion and leaves us with $E_8 = 0$.

Our goal here is to give a computation of this
Bockstein spectral sequence 
for $X = 
{\sf X}^n \cp$ and $ BU(n)$, computing $ER(2)^*(-)$ from $E(2)^*(-)$.
The computation is accomplished by going
through an   auxiliary spectral sequence to compute
$d_1$.  Once that is done, $d_3$ and $d_7$ follow.

Our actual computations are carried out with $\Smash^n \cp$ and
$MU(n)$ because the product and $BU(n)$ can be recovered
from the stable splittings, (e.g. $BU(n) = MU(n)\vee BU(n-1)$, 
\cite{MitPrid}).

There is a special element,
$\vhat_2 \in ER(2)^{48}$ that maps to $v_2^{-8} \in E(2)^{48}$.  It is the
periodicity element for $ER(2)$ and it makes our bookkeeping
easier if we do away with it once and for all now by setting $\vhat_2 = 1$,
and, in $E(2)^*$, the corresponding $v_2^{-8} = 1$.
This makes our theories graded over $\Z/(48)$.

There are also elements
$\vhat_1 \in ER(2)^{16}$
that maps to $v_1 v_2^{-3} \in E(2)^{16}$
and
$w \in ER(2)^{-8}$ mapping to $\vhat_1 v_2^4 = v_1 v_2 \in E(2)^{-8}$.

The theory $E(2)^*(-)$ is a complex orientable theory so 
$E(2)^*(\C \PP^\infty) = E(2)^*[[u]]$ where $u$ is of degree 2.
The only adjustment needed here is to define $\uhat = u v_2^3$,
of degree -16.
We write $E(2)^*(\C \PP^\infty) =
E(2)^*[[\uhat]]$.  Since $v_2$ is a unit, this is not a problem.

We also need the complex conjugate of $\uhat$, $c(\uhat)$.  
There is a 
class,
$\pntr \in ER(2)^{-32}(\C \PP^\infty)$,
that maps to
$\uhat\, c(\uhat) \in E(2)^{-32}(\C \PP^\infty)$.
This is a 
modified 
first Pontryagin class.

We can generalize this to $BU(n)$.  Because $E(2)$ is a complex
oriented theory, we have
$$
E(2)^*(BU(n)) \iso E(2)^*[[c_1,\ldots,c_n]].
$$
Again, we need to modify the generalized Conner-Floyd Chern
classes to $\chat_k = v_2^{3k} c_k$, putting them in degree
$-16k$.
We also have elements
$\Pntr_k \in ER(2)^{-32k}(BU(n))$ that
map to $\chat_k \, c(\chat_k)$ in $E(2)^{-32k}(BU(n))$.
These are modified Pontryagin classes, and they are
permanent cycles in our spectral sequence.

Although we compute all of $ER(2)^*(-)$ for $\Smash^n \cp$ and $MU(n)$,
it was not deemed sufficiently seductive to put our description of
the $x^1$-torsion generators in the introduction.  

We let $\uhat_i$ be our $\uhat$ associated with the $i$-th term in
the smash product of the $\cp$.  Similarly, with $\pntr_i$.
The clean results we can state nicely are presented in the next theorems.
Keep in mind that because we use an auxiliary spectral sequence to
compute $d_1$, our results are stated in terms of associated graded
versions of $E_i$.

\begin{thm}
	\label{smash}
	For the Bockstein spectral sequence going from $E(2)^*(X)$ to $ER(2)^*(X)$,
	we have associated graded versions of $E_i$ as follows:
$E_1 = $
$$
E(2)^*(\Smash^n \cp) \iso E(2)^*
[[\uhat_1,\uhat_2,\ldots,\uhat_n]]
\{\uhat_1 \uhat_2 \cdots \uhat_n \}
$$
$$
= 
\Zp[\vhat_1]
[[\uhat_1,\uhat_2,\ldots,\uhat_n]]
\{ v_2^{0-7}
\uhat_1 \uhat_2 \cdots \uhat_n \}
$$
$E_2 = E_3 = $
$$
\Zq[\pntr_n]\{v_2^{0,2,4,6} \pntr_1 \pntr_2 \ldots \pntr_n \}
$$
The $x^3$-torsion generators are represented by 
$$
\Zq[\pntr_n]\{v_2^{0,4} \pntr_1 \pntr_2 \ldots \pntr_n^2 \}
$$
$E_4 = E_5 = E_6 = E_7 = $
$$
\Zq\{v_2^{0,4} \pntr_1 \pntr_2 \ldots \pntr_n \}
$$
The $x^7$-torsion generator is represented by 
$$
\Zq\{ \pntr_1 \pntr_2 \ldots \pntr_n \}
$$
\end{thm}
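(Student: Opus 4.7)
The plan is to bootstrap from the single $\cp$ computation of \cite{Vitaly} by using complex orientability of $E(2)$ to pin down $E_1$ and then exploiting multiplicativity of the Bockstein spectral sequence to propagate all later differentials across the $n$ factors.

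First, for the $E_1$ page, the K\"unneth theorem for $E(2)$ --- valid because each reduced factor $\tilde{E}(2)^*(\cp) = \uhat\, E(2)^*[[\uhat]]$ is free and hence $E(2)^*$-flat --- identifies $\tilde{E}(2)^*(\Smash^n \cp)$ with the tensor power, realized as the principal ideal $(\uhat_1 \uhat_2 \cdots \uhat_n)$ inside $E(2)^*[[\uhat_1,\ldots,\uhat_n]]$. Writing $E(2)^* = \Zp[\vhat_1, v_2^{\pm 1}]$ and imposing the normalization $\vhat_2 = 1$ collapses the coefficients to $\Zp[\vhat_1]$ graded over $\Z/(48)$ with basis $v_2^{0-7}$, matching both displayed forms of $E_1$.

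Next, to obtain $E_2$ I would compute $d_1$ through the auxiliary spectral sequence referenced in the introduction. Multiplicativity makes each $d_r$ a derivation, so its action on the individual $\uhat_i$ and on $v_2$ is dictated by the single-$\cp$ result, and Leibniz extends it across the product $\uhat_1 \cdots \uhat_n$. Passing to the associated graded and discarding the $x^1$-torsion generators so produced, the survivors are built from the Pontryagin classes $\pntr_i$ (which map to $\uhat_i c(\uhat_i)$ and are permanent cycles by construction) paired with the scalars $v_2^{0,2,4,6}$, giving the stated $E_2 = E_3$. The same template then runs at $d_3$ and at $d_7$: the derivation property applied to the known single-$\cp$ values on $\pntr_i$ immediately yields the $x^3$-torsion module $\Zq[\pntr_n]\{v_2^{0,4} \pntr_1 \cdots \pntr_n^2\}$, then $E_4 = \cdots = E_7 = \Zq\{v_2^{0,4} \pntr_1 \cdots \pntr_n\}$, and finally the unique $x^7$-torsion generator $\pntr_1 \cdots \pntr_n$, leaving $E_8 = 0$.

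The main obstacle is the $d_1$ step. One must verify that the auxiliary spectral sequence is itself multiplicative and interacts cleanly with the K\"unneth isomorphism, so that a factor-wise derivation computation really assembles into the global $d_1$ with no higher-filtration corrections appearing on the associated graded. In particular one must check that promoting $\uhat_i \otimes c(\uhat_i)$ to the class $\pntr_i$ does not introduce Leibniz corrections that would change $d_3$ or $d_7$. Once this compatibility is in place, every page past $E_2$ is essentially forced by multiplicativity together with the single-$\cp$ input.
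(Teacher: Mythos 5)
Your argument breaks down at the $E_2$ page, and the shape of the answer already shows why a multiplicativity or K\"unneth argument cannot carry the single-$\cp$ computation across the smash factors. If $E_2$ factored as a tensor power of the $\cp$ case, you would get a polynomial generator on each $\pntr_i$; but the theorem's $E_2 = \Zq[\pntr_n]\{v_2^{0,2,4,6}\pntr_1\cdots\pntr_n\}$ has only $\pntr_n$ as a polynomial generator, with $\pntr_1,\ldots,\pntr_{n-1}$ appearing only to the first power. That collapse is created by cross-factor interactions in $d_1$ that Leibniz alone cannot produce.

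Two things obstruct the scheme. First, $d_1(y)=v_2^{-3}(1-c)(y)$ with $c$ a ring homomorphism is only a twisted derivation: $d_1(ab)=d_1(a)b+c(a)d_1(b)$, so Leibniz picks up a correction $v_2^3\,d_1(a)d_1(b)$. Second, and decisively, the paper's computation of $d_{1,j}$ on $\uhat^\ep$ with $\ep_j=1$ yields $\vhat_1\pntr_j\uhat^{\ep-\Delta_j}$ in the associated graded, and because $\vhat_1$ has already been annihilated there by $d_{1,1}$, this must be re-expressed via the relation $\vhat_1\pntr=\pntr^2$ (relation \ref{oldformula2}) as a term of the form $\pntr_{j-1}\pntr_j\uhat^{\ep-\Delta_j}$ (proposition \ref{propj}). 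This converts a differential formally supported on the $j$-th factor into one coupling the $(j-1)$-st and $j$-th factors, which is exactly the phenomenon that cuts $\Zq[\pntr_1,\ldots,\pntr_n]$ down to $\Zq[\pntr_n]$. A factor-wise derivation calculation misses these cross-terms and overcounts $E_2$; once $E_2$ is wrong, the later $d_3$ and $d_7$ steps---though correctly coefficient-driven in spirit---act on the wrong module. The filtration by $(I,\ep)$ of Definition \ref{order} and the inductive computation of the $d_{1,j}$ are essential and cannot be replaced by the K\"unneth shortcut.
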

\begin{thm}
	\label{mu}
	For the Bockstein spectral sequence going from $E(2)^*(X)$ to $ER(2)^*(X)$,
	we have associated graded versions of $E_i$ as follows:
$E_1 = $
$$
E(2)^*(MU(2n)) \iso E(2)^*
[[\chat_1,\chat_2,\ldots,\chat_{2n}]]
\{\chat_{2n} \}
$$
$$
= 
\Zp[\vhat_1]
[[\chat_1,\chat_2,\ldots,\chat_{2n}]]
\{ v_2^{0-7}
\chat_{2n} \}
$$
$E_2 = E_3 = $
$$
\Zq[\vhat_1][[\Pntr_2,\Pntr_4,\ldots,\Pntr_{2n}]]\{v_2^{0,2,4,6} \Pntr_{2n} \}
$$
The $x^3$-torsion generators are represented by 
$$
\Zq[\vhat_1][[\Pntr_2,\Pntr_4,\ldots,\Pntr_{2n}]]\{\vhat_1 v_2^{0,4} \Pntr_{2n} \} =
$$
$$
\Zq[\vhat_1][[\Pntr_2,\Pntr_4,\ldots,\Pntr_{2n}]]\{\vhat_1  \Pntr_{2n}, w \Pntr_{2n} \}
$$
$E_4 = E_5 = E_6 = E_7 = $
$$
\Zq[\Pntr_2,\Pntr_4,\ldots,\Pntr_{2n}]\{ v_2^{0,4} \Pntr_{2n} \}
$$
The $x^7$-torsion generators are represented by 
$$
\Zq[\Pntr_2,\Pntr_4,\ldots,\Pntr_{2n}]\{  \Pntr_{2n} \}
$$
\end{thm}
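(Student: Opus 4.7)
The plan is to leverage Theorem \ref{smash} via the Thomification of the splitting principle. The maximal torus inclusion $T^{2n} \hookrightarrow U(2n)$ yields a map $\Smash^{2n} \cp \to MU(2n)$ whose induced map on $E(2)$-cohomology sends each $\chat_k$ to the elementary symmetric function $\sigma_k(\uhat_1, \ldots, \uhat_{2n})$, the Thom class factor $\chat_{2n}$ to the product $\uhat_1 \uhat_2 \cdots \uhat_{2n}$, and each Pontryagin class $\Pntr_k$ to a symmetric combination of the $\pntr_i$. This pullback is injective onto the $\Sigma_{2n}$-invariant summand of $E(2)^*(\Smash^{2n} \cp)$, so differentials in the Bockstein spectral sequence for $MU(2n)$ can be computed by restricting those of Theorem \ref{smash} to the invariant part.

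The $E_1$ description is immediate from the complex orientability of $E(2)$ and the Thom isomorphism, together with the identification $E(2)^* \iso \Zp[\vhat_1]$ after setting $\vhat_2 = 1$. For $d_1$, I would run the same auxiliary spectral sequence used for $\Smash^{2n} \cp$ on $MU(2n)$. Multiplicativity reduces $d_1$ to its values on the generators $\chat_k$ and on the Thom factor $\chat_{2n}$; these values can be read off by mapping to $\Smash^{2n} \cp$ and extracting the symmetric parts of the differentials given in Theorem \ref{smash}. The output is the stated $E_2 = E_3$: a completed polynomial ring in the even-indexed Pontryagin classes $\Pntr_{2k}$ over $\Zq[\vhat_1]$, with four module generators $v_2^{2j} \Pntr_{2n}$ for $j = 0, 1, 2, 3$.

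The remaining differentials follow from multiplicativity and the same restriction argument. The differential $d_3$ sends $v_2^{2} \Pntr_{2n}$ and $v_2^{6} \Pntr_{2n}$ to $\vhat_1$-multiples, producing the $x^3$-torsion generators $\vhat_1 \Pntr_{2n}$ and $w \Pntr_{2n}$ (using $w = \vhat_1 v_2^4$) and simultaneously killing the $\vhat_1$ factor in the coefficient ring. Then $d_7$ sends the remaining $v_2 \Pntr_{2n}$ and $v_2^5 \Pntr_{2n}$ to $\Pntr_{2n}$-multiples, leaving the two $v_2^{0,4}$ parities as the surviving module generators over $\Zq[\Pntr_2, \ldots, \Pntr_{2n}]$ and identifying $\Pntr_{2n}$ as the sole $x^7$-torsion generator up to the polynomial ring in the other even Pontryagin classes.

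The main obstacle will be the $d_1$ computation, specifically showing that the odd-indexed Pontryagin classes $\Pntr_{2k+1}$ do not appear in $E_2$, while $\vhat_1$ does survive (in contrast to the $\Smash^{2n} \cp$ case, where $d_1$ kills $\vhat_1$). This requires a careful bookkeeping of which symmetric combinations of the $\pntr_i$ become $d_1$-boundaries once one restricts to the ideal generated by $\chat_{2n}$, and a careful convergence argument, since the coefficient ring is a completed power series ring $\Zq[\vhat_1][[\Pntr_2, \ldots, \Pntr_{2n}]]$ rather than the finitely generated ring appearing for $\Smash^{2n} \cp$.
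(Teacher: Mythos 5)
Your high-level plan coincides with the paper's approach (embed $E(2)^*(MU(2n))$ into $E(2)^*(\Smash^{2n}\cp)$ via the maximal torus, filter by the order on lead terms, and run an auxiliary spectral sequence to compute $d_1$, then finish with $d_3$ and $d_7$ on the coefficients), but there is a genuine gap at the core of the argument. The step where you claim ``multiplicativity reduces $d_1$ to its values on the generators $\chat_k$ and on the Thom factor $\chat_{2n}$; these values can be read off by mapping to $\Smash^{2n}\cp$ and extracting the symmetric parts'' is not a proof, and it is not merely a matter of bookkeeping. The auxiliary filtered spectral sequence computes the \emph{lead term} of $d_1(w_{I,\ep})$ in the associated graded object, and that lead term does \emph{not} typically come from $d_1$ applied to the lead term $\pntr^I\uhat^\ep$; it comes from some non-leading permutation of $\pntr^I\uhat^\ep$ inside the symmetric function. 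For example $d_1(w_{(1,0),(0,1)}) = d_1(\pntr_1\uhat_2 + \uhat_1\pntr_2) = 2\vhat_1\pntr_1\pntr_2 = 0$ modulo the lowest filtration jump, and one must pass to a longer expansion of $d_1(\uhat)$ to detect $w_{(2,1),(0,0)}$. Identifying which permutation wins, and when the $\vhat_1$ term is present versus when one must fall back to the relation $\vhat_1\pntr = \pntr^2$, is precisely the content of the paper's Lemmas on the ``free'' and ``non-free'' parts and the inductive computation of the $E_{1,j}$, which occupy the bulk of the argument. Your proposal names this as ``the main obstacle'' but does not resolve it.

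A second, related issue: the restriction argument you invoke cannot be applied at the level of $E_2$. While the map of Bockstein spectral sequences is indeed a monomorphism on $E_1$, the conclusions of Theorem \ref{smash} do not transport: $\vhat_1$ dies entirely in $E_2(\Smash^{2n}\cp)$, but survives in $E_2(MU(2n))$ (and, separately, the odd $\Pntr_{2k+1}$ must be eliminated). This happens because the $d_1$-boundaries killing $\vhat_1\pntr^I$ in the smash product have preimages that are not symmetric, so they are simply absent from the sub-spectral-sequence for $MU(2n)$. This makes ``restrict Theorem \ref{smash}'' strictly weaker information than what is needed; one must redo the homology computation inside the symmetric invariants, which is where the technical lemmas and the big inductive theorem come in. Your treatment of $d_3$ and $d_7$ once $E_2$ is in hand is correct and matches the paper, since at that stage all remaining classes are built from $v_2$ and permanent cycles $\Pntr_{2k}$ and the differentials are read off the coefficients.
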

\begin{thm}
	\label{muodd}
	For the Bockstein spectral sequence going from $E(2)^*(X)$ to $ER(2)^*(X)$,
	we have associated graded versions of $E_i$ as follows:
$E_1 = $
$$
E(2)^*(MU(2n+1)) \iso E(2)^*
[[\chat_1,\chat_2,\ldots,\chat_{2n+1}]]
\{\chat_{2n+1} \}
$$
$$
= 
\Zp[\vhat_1]
[[\chat_1,\chat_2,\ldots,\chat_{2n+1}]]
\{ v_2^{0-7}
\chat_{2n+1} \}
$$
$E_2 = E_3$, for $0 \le b < n$
$$
\Zq[\Pntr_2,\Pntr_4,\ldots,\Pntr_{2b},\Pntr_{2b+1},\Pntr_{2b+3},\ldots,\Pntr_{2n+1}]
\{v_2^{0,2,4,6} \Pntr_{2b+1} \Pntr_{2n+1} \}
$$
and
$$
\Zq[\Pntr_2,\Pntr_4,\ldots,\Pntr_{2n},\Pntr_{2n+1}]
\{v_2^{0,2,4,6}  \Pntr_{2n+1} \}
$$
The $x^3$-torsion generators are represented by 
$$
\Zq[\Pntr_2,\Pntr_4,\ldots,\Pntr_{2b},\Pntr_{2b+1},\Pntr_{2b+3},\ldots,\Pntr_{2n+1}]
\{v_2^{0,4} \Pntr_{2b+1} \Pntr_{2n+1} \}
\quad 0 \le b \le n
$$
$E_4 = E_5 = E_6 = E_7 = $
$$
\Zq[\Pntr_2,\Pntr_4,\ldots,\Pntr_{2n}]
\{v_2^{0,4}  \Pntr_{2n+1} \}
$$
The $x^7$-torsion generators are represented by 
$$
\Zq[\Pntr_2,\Pntr_4,\ldots,\Pntr_{2n}]
\{  \Pntr_{2n+1} \}
$$
\end{thm}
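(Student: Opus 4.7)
The plan is to adapt the proof of Theorem~\ref{mu} (for the even case $MU(2n)$) to the odd case, treating the extra top Chern class $\chat_{2n+1}$ as the sole source of all the new structure. The $E_1$-page is immediate from the complex orientation of $E(2)$ together with the Thom isomorphism for $MU(2n+1)$: after the renormalization $\chat_k = v_2^{3k} c_k$, one obtains $E(2)^*(MU(2n+1)) \cong E(2)^*[[\chat_1,\ldots,\chat_{2n+1}]]\{\chat_{2n+1}\}$ with $\chat_{2n+1}$ the modified Thom class, which gives both displayed forms of $E_1$.

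The main step is to compute $d_1$ via the auxiliary spectral sequence developed for Theorems~\ref{smash} and~\ref{mu}. In the even case the $d_1$-differentials effectively pair each $\chat_k$ with its complex conjugate $c(\chat_k)$, so that the monomials surviving to $E_2$ are built out of the Pontryagin classes $\Pntr_k$ together with a $\vhat_1$-factor coming from the odd-total-degree sector. In the odd case the extra Chern class $\chat_{2n+1}$ breaks this symmetry: the new $d_1$-differentials involving $\chat_{2n+1}$ wipe out the $\vhat_1$-summand entirely and force the surviving module generators to take the form $\Pntr_{2b+1}\Pntr_{2n+1}$. The stratification by $0 \le b \le n$ records which odd Chern class $\chat_{2b+1}$ is ``linked'' to the Thom class $\chat_{2n+1}$ through $d_1$; when $b = n$ the linkage is with $\chat_{2n+1}$ itself, and the module generator collapses to a single factor $\Pntr_{2n+1}$, matching the second displayed piece of the $E_2 = E_3$ statement.

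Once $E_2 = E_3$ is identified, the computations of $d_3$ and $d_7$ proceed by direct analogy with Theorem~\ref{mu}. The $d_3$-differentials kill the odd-indexed $\Pntr_{2k+1}$ polynomial generators (and detect the $x^3$-torsion summands indexed by $0 \le b \le n$ listed in the statement), leaving the clean $E_4 = E_5 = E_6 = E_7$ page with polynomial algebra on the even-indexed $\Pntr_{2k}$ and module generator $v_2^{0,4}\Pntr_{2n+1}$. Finally $d_7$ kills the $v_2^4$-factor, producing the $x^7$-torsion generated by $\Pntr_{2n+1}$.

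The hard part will be rigorously establishing the stratified decomposition of $E_2 = E_3$ as a direct sum over $b$, with no overlaps or missing summands. A useful tool is the naturality of both the auxiliary and Bockstein spectral sequences with respect to the stable maps $\Sigma^2 MU(2n) \to MU(2n+1)$ induced by $BU(2n) \hookrightarrow BU(2n+1)$; this should identify the $b = n$ stratum as the image of the $MU(2n)$-computation from Theorem~\ref{mu}, and exhibit the $b < n$ strata as the genuinely new contributions arising from multiplication by an odd $\Pntr_{2b+1}$ against the new Thom class.
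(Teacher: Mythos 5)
Your high-level picture — start from Theorem~\ref{bigthm}, pass to Pontryagin classes, then run $d_3$ and $d_7$ — is the right skeleton, but two of your key steps do not match what actually happens and would not survive being made rigorous.

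First, the disappearance of the $\Zq[\vhat_1]$ summand at $E_2 = E_3$ is \emph{not} caused by extra $d_1$-differentials involving $\chat_{2n+1}$. In Theorem~\ref{bigthm}, the $\Zq[\vhat_1]$-free part of $E_{1,n+1}$ carries the condition $i_{2b-1} = i_{2b}$ for $0 < 2b \le n+1$; when $n = 2q+1$ this forces $i_n = i_{n+1}$, and since there is no $i_{n+1}$ the summand is empty for a purely formal reason (a constraint that cannot be met), exactly as in the remark after Theorem~\ref{bigthm}. There is no ``wiping out'' by new differentials linked to the Thom class; that picture mislocates the argument.

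Second, and more seriously, your description of $d_3$ as ``killing the odd-indexed $\Pntr_{2k+1}$ polynomial generators'' skips the entire content of the odd-case $d_3$ computation. Since there is no $\vhat_1$ on the $E_3$ page, the coefficient formula $d_3(v_2^2) = \vhat_1 v_2^{-4}$ cannot just multiply through; one must replace $\vhat_1$ using Relation~\ref{oldformula2}, $\vhat_1 \pntr_k = \pntr_k^2$, and this has to be done at a carefully chosen index so that the result satisfies {\bf property A} and lies in the associated graded object. The paper shows: if $i_{2b} > i_{2b+1}$ one raises $i_{2b+1}$, but if $i_{2b} = i_{2b+1}$ one must instead raise $i_{2b'+1}$ where $b'$ is minimal with $i_{2b'+1} = \cdots = i_{2b+1}$. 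This sends a type-$b$ element to a type-$b'$ element with $b' < b$, producing a nontrivial bijection between strata rather than simply killing a polynomial generator, and the surviving $E_4$-classes are precisely those with $b = q$ and $i_{2q+1} = 1$. Your proposal would not recover this, and hence would not yield the stated $E_4$ or the correct set of $x^3$-torsion generators. Relatedly, the naturality suggestion via $\Sigma^2 MU(2n) \to MU(2n+1)$ is unlikely to help: the $E_2$-page for $MU(2n)$ has a $\Zq[\vhat_1]$ factor while the odd $E_2$ does not, so the supposed identification of the $b=n$ stratum with the even-case computation does not match on the nose and would itself require the very analysis you are trying to avoid.
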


The elements $\vhat_1$, $w$, $\pntr_i$, and $\Pntr_i$ all exist in the appropriate
$ER(2)^*(X)$.
It is worth noting that 
all of the $x^3$-torsion generators are well-defined in
$ER(2)^*(MU(2n))$ (likewise with the $x^7$-torsion generators in all three cases).
Consequently, new elements don't have to
be created and named.  
We often deal only with elements in degrees $16*$.  To see these, just modify
the statements in the theorems to eliminate the $v_2^{2,4,6}$.
In fact, we can handle elements in degrees $8*$ quite easily.  In the case
of the above theorems, just keep the $v_2^{0,4}$ and eliminate the $v_2^{2,6}$.
By definition, the $x^i$-torsion generators inject to $E(2)^*(X)$.

The following is useful for computations and relations.

\begin{thm}
	\label{inj}
	For $X = \Smash^n \cp$ and $MU(n)$, $ER(2)^{8*}(X) \ra E(2)^{8*}(X)$ injects.
\end{thm}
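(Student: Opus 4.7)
The plan is to deduce Theorem \ref{inj} from the degree data of Theorems \ref{smash}, \ref{mu}, and \ref{muodd} via the long exact sequence of the cofiber sequence $\Sigma^{17}ER(2)\xrightarrow{x}ER(2)\to E(2)$. For any space $X$ it takes the form
$$\cdots\to ER(2)^{n+17}(X)\xrightarrow{\cdot x} ER(2)^n(X)\xrightarrow{\rho} E(2)^n(X)\xrightarrow{\delta} ER(2)^{n+18}(X)\to\cdots,$$
so that $\ker(\rho)$ in degree $n$ equals $x\cdot ER(2)^{n+17}(X)$. Setting $n=8k$, the shifted degree satisfies $n+17\equiv 1\pmod 8$, and the theorem reduces to proving that
$$ER(2)^m(X)=0\qquad\text{for every }m\equiv 1\pmod 8.$$

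I would deduce this vanishing from the finite $x$-adic filtration $F^p=x^p\cdot ER(2)^*(X)$: the filtration terminates at $F^7=0$ since $x^7=0$, and its associated graded is packaged by the Bockstein $E_\infty$-data of Theorems \ref{smash}--\ref{muodd}. Every nonzero associated graded class has the form $x^j\alpha$ with $\alpha$ one of the listed $x^r$-torsion generators, $r\in\{1,3,7\}$, and $0\le j\le r-1$; its degree is $\deg(\alpha)-17j\equiv\deg(\alpha)-j\pmod 8$ because $17\equiv 1\pmod 8$.

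A direct inspection of the three theorems shows that every listed $x^3$- and $x^7$-torsion generator has $\deg(\alpha)\equiv 0\pmod 8$, using $\deg(\pntr_i)=-32$, $\deg(\Pntr_i)=-32i$, $\deg(\vhat_1)=16$, $\deg(w)=-8$, together with the fact that the $x^3$-torsion rows allow only $v_2^{0,4}$ of degrees $0,-24$. The $x^1$-torsion generators are represented by classes of $E_1=E(2)^*(X)$, which sits in even degrees because $\Smash^n\cp$, $MU(2n)$, and $MU(2n+1)$ have only even-dimensional cells. The residues $\deg(\alpha)-j\pmod 8$ therefore fall in $\{0,2,4,6\}$ when $r=1$, in $\{0,7,6\}$ when $r=3$, and in $\{0,7,6,5,4,3,2\}$ when $r=7$; their union is $\{0,2,3,4,5,6,7\}$, which misses $1$. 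Hence the associated graded vanishes in degrees $\equiv 1\pmod 8$, and the finiteness and separatedness of the filtration forces $ER(2)^m(X)=0$ for such $m$, giving the desired injectivity.

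The main obstacle is the bookkeeping identification that the ``associated graded'' packaged in Theorems \ref{smash}--\ref{muodd} really is the associated graded of the $x$-adic filtration, so that vanishing of the former implies vanishing of $ER(2)^m(X)$. Once this is granted, the rest is a short mod-$8$ calculation anchored on $17\equiv 1\pmod 8$ and on the observation that all relevant torsion generators sit in even degrees (and in fact in degrees divisible by $8$ when $r\ge 3$).
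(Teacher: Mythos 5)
Your proposal is correct and takes essentially the same route as the paper: the paper's own argument (spread over the remarks following the $d_7$ computations for $\Smash^n\cp$ and $MU(n)$) is exactly the mod-$8$ degree count using $\deg(x)\equiv -1\pmod 8$, that the $x^1$-torsion generators sit in even degrees, and that the $x^3$- and $x^7$-torsion generators sit in degrees $\equiv 0\pmod 8$. You package this slightly more cleanly as the vanishing $ER(2)^m(X)=0$ for $m\equiv 1\pmod 8$ fed into the long exact sequence, but the content is identical.
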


\begin{remark}
	In the kernel of
	$ER(2)^{4*}(\Smash^n \cp) \ra E(2)^{4*}(\Smash^n \cp)$, 
	there is only one element, 
	namely,
	$x^4 \pntr_1 \pntr_2 \ldots \pntr_n$.
	Similarly, in degrees $(8*-6)$ we have only
	$x^6 \pntr_1 \pntr_2 \ldots \pntr_n$.
\end{remark}

We do our general preliminaries in section \ref{preliminaries}.
In section \ref{sketch} we sketch out our approach in both
cases in rather general terms to give some idea of how we go
about our computations.
We define a crucial filtration in section \ref{secfilt}.
Then we spend a few sections doing the computation for $\Smash^n \cp$.
When that is done, we begin preliminaries for $BU(n)$ in
section \ref{startbu}.
We do the main calculation for $MU(n)$ starting in section
\ref{dobu} going to the end of the paper.

\section{Preliminaries}
\label{preliminaries}

There are many ways to describe $ER(2)^*$, but we will stick
mainly with the description given in
\cite[Remark 3.4]{Kitch-Wil-BO}.

We have traditionally given the name $\alpha$ to the element 
$\vhat_1$, but this is gradually being phased out. 
We also have elements $\alpha_i$, $0 < i < 4$,
with degree $-12i$.  We often extend this notation to $\alpha_0 = 2$.  
These elements map to $2v_2^{2i} \in E(2)^*$.   
For the last  non-torsion algebra generator,
we have $w$ of degree -8, which
maps to $\vhat_1 v_2^4 = v_1 v_2 \in E(2)^*$.

Torsion is generated by the element $x \in ER(2)^{-17}$.  It has $2x=0$
and $x^7 = 0$.  
Keep in mind that $ER(2)^*$ is 48 periodic.
We use, for efficient notation, $x^{3-6} = \{x^3, x^4, x^5, x^6 \}$
and $R\{a,b\}$ for the free $R$-module on $a$ and $b$.

\begin{fact}\cite[Proposition 2.1]{NituER2}
\label{ffact}
$ER(2)^*$ is:
\[
 \Zp[\vhat_1] \{ 1, w, \alpha_1, \alpha_2, \alpha_3 \} 
\quad
\text{with}
\quad
2w = \alpha \alpha_2 = \vhat_1 \alpha_2
\]
\[
\Zq[\vhat_1] \{ x^{1-2},  x^{1-2} w \}
\qquad
\Zq \{ x^{3-6} \}.
\]
\end{fact}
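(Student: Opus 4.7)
The statement gives a presentation of the coefficient ring $ER(2)^*$ as a module over itself. My plan is to compute this directly via the homotopy fixed point spectral sequence (HFPSS) associated with the defining equivalence $ER(2) = E(2)^{hC_2}$, where $C_2$ acts on $E(2)$ by complex conjugation. The $E_2$-page is $H^s(C_2; E(2)^t) \Rightarrow ER(2)^{t-s}$, and since complex conjugation on the Lazard-type generators $v_n \in \pi_{2(2^n-1)} MU_\R$ acts by $(-1)^{2^n-1} = -1$, the induced action on $E(2)^* = \Zp[v_1, v_2^{\pm 1}]$ satisfies $v_1 \mapsto -v_1$ and $v_2 \mapsto -v_2$.

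First I would read off $H^0(C_2; E(2)^*)$, consisting of the even-weight monomials in $v_1, v_2$, together with $H^{\geq 1}$ contributions, which are $\F_2$-polynomial on the same generators tensored with powers of the group cohomology generator. Named classes appear naturally at $E_2$: $\vhat_1$ is the invariant $v_1 v_2^{-3}$ of degree $16$; $w$ is the invariant $v_1 v_2$ of degree $-8$; each $\alpha_i$ is $2v_2^{2i}$ in $H^0$; the periodicity element $\vhat_2$ is $v_2^{-8}$, which we set to $1$; and $x$ is the generator of $H^1$ in the appropriate stem. The multiplicative relation $2w = \vhat_1 \alpha_2$ is already visible on $E_2$, since $(v_1 v_2^{-3})(2 v_2^4) = 2 v_1 v_2$ on the nose in $E(2)^*$.

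Next I would determine the differentials. The fiber sequence $\Sigma^{17} ER(2) \xrightarrow{x} ER(2) \to E(2)$ of \cite{Nitufib} forces $2x = 0$ and $x^7 = 0$, so the HFPSS must collapse to a spectral sequence carrying only these truncations; by parity of differentials it suffices to identify $d_1$, $d_3$, and $d_7$, exactly as in the spectral sequence described in the introduction of this paper applied to $X = \mathrm{pt}$. The $d_1$ kills off the $2$-divisible portion of $H^0$ that is not hit by invariants, producing the $\Zq$ in the torsion generators; the $d_3$ eliminates mixed terms that would otherwise contradict the known collapse in the $KO$-style subquotient, mirroring the classical $d_3$ for $KO_{(2)} = ER(1)$; and the $d_7$, which is the genuinely $ER(2)$-specific differential, gives $x^7 = 0$ and cleans up the residual classes so that $x^3$ through $x^6$ lie in the final $\Zq\{x^{3-6}\}$ summand with no further $\vhat_1$ or $w$ multiples.

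Reading off the $E_\infty$-page then yields the four families in the statement: the free $\Zp[\vhat_1]$-module on $\{1, w, \alpha_1, \alpha_2, \alpha_3\}$ from $H^0$, the $\Zq[\vhat_1]$-summands carrying $x$ and $x^2$ (with and without $w$) from $H^1$ and $H^2$, and the narrow $\Zq$-summand carrying $x^3, \ldots, x^6$ from the higher filtration. The hard part is pinning down the higher differentials $d_3$ and $d_7$: once these are identified (by combining the $KO$-analogy at height one with the $v_2$-periodicity forced by $\vhat_2 = 1$), everything else is essentially bookkeeping, since the $E_2$-page is small enough that the module structure over $\Zp[\vhat_1]$ and the relation $2w = \vhat_1 \alpha_2$ propagate mechanically through the pages.
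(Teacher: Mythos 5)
The paper does not prove this Fact --- it cites \cite[Proposition 2.1]{NituER2} --- but the underlying computation is recapped in Remark \ref{coef} via the Bockstein spectral sequence (BSS) attached to the fiber sequence $\Sigma^{17}ER(2) \to ER(2) \to E(2)$. You take a genuinely different route through the homotopy fixed point spectral sequence $H^s(C_2; E(2)^t) \Rightarrow ER(2)^{t-s}$. That is a valid and standard alternative, and your identification of the invariants $\vhat_1 = v_1 v_2^{-3}$, $w = v_1 v_2$, $\alpha_i = 2v_2^{2i}$, together with the on-the-nose check that $2w = \vhat_1 \alpha_2$, is correct.

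Two things, however, need fixing. First, you invoke a $d_1$ in the HFPSS that ``kills off the 2-divisible portion of $H^0$,'' but the HFPSS of a $C_2$-spectrum whose homotopy is concentrated in even degrees has no nontrivial $d_1$ or $d_2$; the first possible differential is $d_3$. What you are describing is either the passage from the cochain complex to group cohomology (where the norm map accounts for the factors of 2, and which is already built into the $E_2$-page) or the $d_1$ of the BSS, which is a different spectral sequence with a different $E_1$-page and a different indexing. Conflating the two is a genuine confusion and not merely a notational slip. Second, and more seriously, the differentials $d_3$ and $d_7$ are the entire content of the calculation, and you do not actually compute them: ``eliminates mixed terms'' and ``cleans up the residual classes'' are placeholders, not arguments. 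The paper's Remark \ref{coef} records the BSS versions $d_3(v_2^2) = \vhat_1 v_2^{-4}$ and $d_7(v_2^4) = 1$ (after setting $\vhat_2 = 1$); the corresponding HFPSS differentials must be pinned down before the $E_\infty$-page, and hence the stated presentation of $ER(2)^*$, can be read off. The $KO$-analogy at height one only produces a $d_3$ and gives no information about $d_7$, so it cannot close this gap on its own.
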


\begin{remark}
There is some arbitrariness about this description in terms
of $\vhat_1 = \alpha$.  Because $\alpha^2 = w^2$, we could just as
easily have used $w$ to describe $ER(2)^*$.
\end{remark}

\begin{remark}
A major theme in this paper will be to look at elements in degrees $16*$ (and sometimes even $8*$).
We have 
$ER(2)^{16*} = \Zp [ \vhat_1]$.
In addition, the $x^1$-torsion generators in degree $16*$ are given by
$\Zp [ \vhat_1]\{2\}$, the $x^3$-torsion generators, $\Zq[\vhat_1]\{\vhat_1\}$, and
the only $x^7$-torsion generator is $\Zq$.
\end{remark}

The fibration $\Sigma^{17}ER(2) \lra  ER(2) \lra E(2)$
gives rise to an 
exact couple and
a convergent 
Bockstein Spectral Sequence that begins with $E(2)^*(X)$  
and where there can only be differentials $d_1$ through $d_7$.

We have used two versions of this spectral sequence in
the past.  
Here we use the  untruncated
version that converges to zero.  

We give a simplified 
summary of the 
Bockstein Spectral Sequence 
(BSS) for computing $ER(2)^*(X)$ from $E(2)^*(X)$.

\begin{thm}[{\cite{NituP}[Theorem 4.2]}] \ 
\begin{enumerate}
\item
The exact couple gives a spectral sequence, $E_r$,
of $ER(2)^*$ modules,
starting with
$$
E_1 \simeq E(2)^*(X) \quad \text{and ending with} \quad 
E_{8} = 0.
$$
\item
$
d_1(y) = v_2^{-3}(1-c)(y)$
where
$c(v_i)= -v_i
$
and $c$ comes from complex conjugation.
\item
The degree of $d_r$ is $17r +1$.
\item
The targets of the 
$d_r$ represent the $x^r$-torsion generators of $ER(2)^*(X)$. 
\end{enumerate}
\end{thm}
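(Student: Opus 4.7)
The plan is to derive everything from the cofiber sequence $\Sigma^{17}ER(2) \xrightarrow{x} ER(2) \to E(2)$ of \cite{Nitufib}, using standard Bockstein spectral sequence formalism. Smashing with $X_+$ (or applying $[X,-]_*$) produces the long exact sequence
\[
\cdots \to ER(2)^{*-17}(X) \xrightarrow{x} ER(2)^*(X) \to E(2)^*(X) \xrightarrow{\delta} ER(2)^{*-16}(X) \to \cdots,
\]
which I would package as an exact couple with $D_1 = ER(2)^*(X)$ (with $x$ as the ``$i$'' map of bidegree $(+1,-17)$) and $E_1 = E(2)^*(X)$. Unrolling this couple in the usual way produces the spectral sequence of part (1); the bidegree of $d_r$ is the composition of the connecting map $\delta$ (degree $+1$) with $r{-}1$ applications of an $x^{-1}$ shift (each of degree $+17$) and a final lift, giving total degree $17r+1$, which is part (3).

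For the vanishing at $E_8$ (and hence the collapse stated in (1)), I would use the fact recalled in Fact~\ref{ffact} that $x^7 = 0$ in $ER(2)^*$. Since $x$ acts as the $i$-map in the exact couple, the $r$-th derived couple involves $\ker(x^r)/\im(x^r)$; once $r \ge 8$, the relation $x^7=0$ forces the image to absorb the kernel and the $E$-term to vanish. This also gives (4) essentially for free: the target of $d_r$ is, by construction of the derived couple, the quotient of $D_r$ detecting classes killed by $x^r$ but not by $x^{r-1}$, i.e. the $x^r$-torsion generators in $ER(2)^*(X)$.

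The main content is the formula for $d_1$ in part (2). Here I would use that $ER(2)$ is the $C_2$-homotopy fixed points of $E(2)$, so the cofiber sequence above arises from the norm/transfer sequence for the $C_2$-action, and the connecting map $\delta\colon E(2)^*(X) \to ER(2)^{*-16}(X)$ factors through $1-c$ where $c$ is complex conjugation (because the fiber of $ER(2) \to E(2)$ is, up to the appropriate suspension, the homotopy orbit $(E(2))_{hC_2}$, whose cohomology is governed by $1-c$). Mapping the result back to $E(2)^*(X)$ via the forgetful map $ER(2)\to E(2)$ and accounting for the $-17$ shift by multiplying by the unit $v_2^{-3}$ (which is the only degree $-16$ unit in $E(2)^*$ compatible with the fiber sequence, using $\vhat_2=1$) yields $d_1(y) = v_2^{-3}(1-c)(y)$. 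The hard step is this identification of $\delta$ with $(1-c)$ up to the unit $v_2^{-3}$; this rests on \cite{Nitufib}'s description of the fiber and the compatibility of the $C_2$-fixed-point structure with complex conjugation on the coefficients, which is where I would invest the most care.
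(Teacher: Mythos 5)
This theorem is quoted from \cite{NituP} and not reproved in this paper, so there is no internal argument to compare against. Your framework is the right one: setting up the exact couple from the cofiber sequence $\Sigma^{17}ER(2)\xrightarrow{x}ER(2)\to E(2)$, noting that $x^7=0$ forces $D_8=\im(x^7)=0$ and hence $E_8=0$, and identifying the targets of $d_r$ with the $x^r$-torsion generators via the standard exact-couple machinery. However, your degree bookkeeping contains errors. The connecting map is $\delta\colon E(2)^*(X)\to ER(2)^{*+18}(X)$, of degree $+18$, not $-16$, and multiplication by $x\in ER(2)^{-17}$ lowers cohomological degree by $17$ (so the induced map runs $ER(2)^{*+17}(X)\to ER(2)^*(X)$), which means the $x$-arrow in your displayed long exact sequence points the wrong way. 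With the correct degrees one gets $\deg(d_r)=18+17(r-1)+0=17r+1$ directly; there is no ``final lift,'' and your verbal count ($\delta$ of degree $+1$, plus $r-1$ lifts, plus an extra lift) arrives at the right number by coincidence rather than by a correct derivation.

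The genuine gap is part (2), which you rightly flag as the hard step. The fiber of $ER(2)\to E(2)$ is $\Sigma^{17}ER(2)$; identifying it with a shift of the homotopy orbit $E(2)_{hC_2}$ rests on the $C_2$-Tate vanishing from \cite{HK}, but that identification by itself does not produce the formula $d_1=v_2^{-3}(1-c)$. One still has to chase the norm cofiber sequence and pin down the precise unit $v_2^{-3}$ (which reflects the choice of periodicity generator and the specific degree $17$ appearing in the fibration), or else verify the formula directly on coefficients and then propagate by the $E(2)$-module structure. The phrase ``the cohomology of the homotopy orbit is governed by $1-c$'' is too coarse to carry either of these steps. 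As an outline the proposal is reasonable and correctly locates the difficulty, but the central computation establishing the $d_1$ formula is left open.
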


\begin{defn}
Let $K_i$ be the kernel of $x^i$ on $ER(2)^*(X)$ and let
$M_i$ be the image of $K_i$ in $ER(2)^*(X)/(xER(2)^*(X)) \subset E(2)^*(X)$.
We call $M_r/M_{r-1} \simeq$ image $d_r$ the {\bf $x^r$-torsion generators.}
\end{defn}

\begin{remark}
All of our BSSs in this paper have only even degree elements, so we
always have $d_2 = d_4 = d_6 = 0$.
In fact, $d_5$ never shows up here although we have seen it with other even degree spaces.
\end{remark}

\begin{remark}[The BSS on the coefficients.]
\label{coef}
For our purposes, it is important to know how this works
for the cohomology of a point (\cite[Theorem 3.1]{Kitch-Wil-BO}).  
The differential $d_1$ is on $E(2)^* = \Zp[\vhat_1,v_2^{\pm 1}] $,
which can now be rewritten as $\Zp[\vhat_1]\{v_2^{0-7}\}$.
The differential, $d_1$,
commutes with $\vhat_1$ and $v_2^2$ so 
all that matters here is 
$d_1(v_2) = 2 v_2^{-2}$.

The $E_2$ term becomes $\Zq [\vhat_1]\{v_2^{0,2,4,6}\}$.
We have $d_3$ commutes with
$\vhat_1$ and $v_2^4$,  and $d_3(v_2^2) = \vhat_1 v_2^{-4}$.

This leaves us with only $\Zq\{v_2^{0,4}\}$.
We have $d_7$ commutes with
$v_2^8 = \vhat_2^{-1} = 1$ 
and $d_7(v_2^4) = \vhat_2 v_2^{-8}= \vhat_2^2 = v_2^{-16} = 1$,
so $E_8 = 0$.
\end{remark}

Using this approach to
$ER(2)^*$ 
we see
that the $x^1$-torsion is generated by $\Zp [ \vhat_1 ] \{ 2v_2^{0,2,4,6} \}$,
the $x^3$-torsion by $\Zq[\vhat_1]\{\vhat_1 v_2^{0,4}\}$, 
and the $x^7$-torsion
by $\Zq $.  The previous description of $ER(2)^*$ is easy
to relate to this now.  The $x$-torsion is given by
$\Zp[\vhat_1]$ on the $\alpha_i$, $0 \le i < 4$.   The 
$x^3$-torsion is generated over 
$\Zq[\vhat_1]$ 
on $\vhat_1= \alpha$ and $w$.
Finally, the $x^7$-torsion is given by $\Zq$.

The complex conjugate of the BSS comes from $E(2)$, but Lorman
shows in
\cite[Lemma 4.1]{Vitaly}
that the complex conjugate of $\uhat \in E(2)^{-16}(\cp)$, $c(\uhat)$,
can be calculated using the formal group law for $E(2)$ from
$\hat{F}(\uhat,c(\uhat)) = 0$.

\begin{remark}
The standard formal group law for $E(2)$ is $F(x,y)$ with the degrees
of $x$ and $y$ equal to two.  The element $F(x,y)$ also has degree two.
Let $\hat{x} = v_2^3 x$ and $\hat{y} = v_2^3 y$.  Replace $v_i$ in
$F$ with $\vhat_i$.  This gives us $\hat{F}(\hat{x},\hat{y}) =
v_2^3 F(x,y)$ of degree $-16$.  
\end{remark}

We need some basic easily computed formulas, which we just quote
here.  We use Araki's generators.
These are all modulo $x^i y^j$, $i+j > 4$ or $\uhat^5$.
\[
\begin{split}
\hat{F}(\hat{x},\hat{y})&  = \hat{x}+\hat{y} + \vhat_1 \hat{x}\hat{y} + 
\vhat_1^2 (\hat{x}^2 \hat{y} + \hat{x} \hat{y}^2) \\
&
+ 
(\frac{
\text{\tiny{6}}
}{\text{\tiny{7}}} \vhat_1^3 + 
\frac{
\text{\tiny{2}}
}{
\text{\tiny{7}}
} \vhat_2)
(\hat{x}^3 \hat{y}+
\hat{x} \hat{y}^3) +
(\frac{
\text{\tiny{16}}
}{
\text{\tiny{7}}
} \vhat_1^3 + \frac{
\text{\tiny{3}}
}{
\text{\tiny{7}}
}\vhat_2) \hat{x}^2 \hat{y}^2 \\
c(\uhat) & = 
- \uhat + \vhat_1 \uhat^2 
- \vhat_1^2 \uhat^3 + 
(\frac{
\text{\tiny{10}}
}{
\text{\tiny{7}}
} \vhat_1^3 + \frac{
\text{\tiny{1}}
}{
\text{\tiny{7}}
}\vhat_2) \uhat^4 
\end{split}
\]
We collect the basics we need:

\begin{lemma} 
\label{lemma}
\[
\begin{array}{ccccl}
	c(\uhat)& = & -\uhat + \vhat_1 \uhat^2 & \hspace{.05in} & \mod (\uhat^3) \\
	c(\uhat)& = & \uhat + \vhat_1 \uhat^2 + \vhat_1^2 \uhat^3
+\vhat_2 \uhat^4 
& \hspace{.05in}  &
\mod (2, \uhat^5) \\
\pntr  =  \uhat c(\uhat) &= &- \uhat^2 
& \hspace{.05in}  &
\mod (\uhat^3) \\
\end{array}
\]
where 
$\pntr \in ER(2)^{-32}(\C \PP^\infty)$ 
maps to
$\uhat\, c(\uhat) \in E(2)^{-32}(\C \PP^\infty)$
and
is 
a modified 
first
Pontryagin class.
\end{lemma}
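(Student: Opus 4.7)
The plan is to derive all three formulas from the single identity $\hat{F}(\uhat, c(\uhat)) = 0$, using the explicit truncation of the formal group law displayed just above the lemma and solving for $c(\uhat)$ coefficient by coefficient. Writing $c(\uhat) = a_1 \uhat + a_2 \uhat^2 + a_3 \uhat^3 + a_4 \uhat^4 + O(\uhat^5)$, each successive coefficient $a_k$ is determined by the lower-order data together with the FGL coefficients, so the proof is purely a matter of organizing the recursion.

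For the first congruence I would work modulo $\uhat^3$. In this range $\hat{F}(\hat{x},\hat{y}) \equiv \hat{x} + \hat{y} + \vhat_1 \hat{x} \hat{y}$, so the equation reduces to $\uhat + a_1 \uhat + (a_2 + \vhat_1 a_1)\uhat^2 \equiv 0$, forcing $a_1 = -1$ and $a_2 = \vhat_1$. The third claim, $\pntr = \uhat\, c(\uhat) \equiv -\uhat^2 \pmod{\uhat^3}$, then follows immediately by multiplying this expansion by $\uhat$.

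For the mod $(2,\uhat^5)$ statement I would first simplify $\hat{F}$ itself by reducing each rational coefficient in $\Zp$ modulo $2$. Since $7$ is a unit in $\Zp$ with $1/7 \equiv 1 \pmod 2$, the fractions $\tfrac{6}{7}, \tfrac{2}{7}, \tfrac{16}{7}$ all vanish mod $2$ while $\tfrac{3}{7} \equiv 1$. So mod $2$ the formal group law truncates to
\[
\hat{F}(\hat{x},\hat{y}) \equiv \hat{x} + \hat{y} + \vhat_1 \hat{x}\hat{y} + \vhat_1^2(\hat{x}^2\hat{y} + \hat{x}\hat{y}^2) + \vhat_2 \hat{x}^2\hat{y}^2 \pmod{(2,\text{order } 5)}.
\]
Starting from $c(\uhat) \equiv \uhat + \vhat_1 \uhat^2 + b_3 \uhat^3 + b_4 \uhat^4$ (the sign on $\uhat$ is irrelevant mod $2$), I would substitute into $\hat{F}(\uhat, c(\uhat)) \equiv 0$, expand $c(\uhat)^2 \equiv \uhat^2 + \vhat_1^2 \uhat^4$, and collect by powers of $\uhat$. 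The coefficient of $\uhat^3$ yields $b_3 = \vhat_1^2$, and the coefficient of $\uhat^4$ gives $b_4 = \vhat_1 b_3 + \vhat_1^3 + \vhat_2 \equiv \vhat_2 \pmod{2}$, which is the asserted formula.

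The only real obstacle is bookkeeping: one must keep track of every contribution to the $\uhat^4$ coefficient (including $\vhat_1^2 \uhat c(\uhat)^2$ and $\vhat_2 \uhat^2 c(\uhat)^2$), and one must verify that higher-order FGL terms not shown in the displayed truncation contribute only $O(\uhat^5)$ and so can be safely ignored. Once that is confirmed, the lemma reduces to a short symbolic computation, and no deeper input is needed beyond the existence of $\pntr$ as a lift of $\uhat\, c(\uhat)$, which is given.
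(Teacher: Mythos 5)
Your proposal is correct and is essentially the paper's approach: the paper displays the explicit expansion $c(\uhat) = -\uhat + \vhat_1 \uhat^2 - \vhat_1^2 \uhat^3 + (\tfrac{10}{7}\vhat_1^3 + \tfrac{1}{7}\vhat_2)\uhat^4$ just before the lemma and then proves the lemma with the single line ``This all follows from the preceding formulas,'' whereas you re-derive that expansion coefficient-by-coefficient from $\hat{F}(\uhat, c(\uhat)) = 0$ and then read off the three congruences. Your mod-$2$ reductions of the fractional coefficients ($6/7, 2/7, 16/7 \equiv 0$, $3/7 \equiv 1$) and the resulting $b_3 = \vhat_1^2$, $b_4 = \vhat_2$ are verified correct, so the only difference is that you supply the recursion the paper takes as already computed.
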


\begin{proof}
	This all follows from the preceding formulas.
\end{proof}

Recall that 
$$
d_1(y) = v_2^{-3}(1-c)(y).
$$
We rewrite some of our basic facts from lemma \ref{lemma} in our present
terminology keeping in mind that in $E(2)^*(-)$, $\vhat_2 = 1 = v_2^{-8}$ and 
$\pntr = \uhat \, c(\uhat)$.

\begin{lemma}\label{lemma2}
\[
\begin{array}{cclcl}
c(\uhat) & = & -\uhat - \vhat_1 \pntr & \hspace{.05in} & \mod (\pntr \uhat ) \\ 
c(\vhat_1) & =  &\vhat_1 & & \\
c(v_2) & = & -v_2 & & \\
d_1(\uhat) &= & 2 v_2^{-3} \uhat & \hspace{.05in} & \mod (\pntr) \\
d_1(v_2 \uhat) &= & 0 & \hspace{.05in} & \mod (\pntr) \\
d_1(\uhat) &=  & v_2^{-3} \vhat_1 \pntr & \hspace{.05in} & \mod (2, \pntr \uhat) \\
d_1(\uhat) &=  & v_2^{-3}( \vhat_1 \pntr + \vhat_1^3 \pntr^2 + \pntr^2) & 
\hspace{.05in} & \mod (2, \pntr^2 \uhat) \\
d_1(v_2) &= & 2 v_2^{-2} & \hspace{.05in} &  \\
d_1(v_2 \pntr) &= & 2 v_2^{-2} \pntr & \hspace{.05in} & \mod ( \pntr \uhat) \\
\end{array}
\]
\end{lemma}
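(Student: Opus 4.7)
The plan is to derive each identity directly from Lemma~\ref{lemma}, the definition $d_1(y) = v_2^{-3}(1-c)(y)$, and the multiplicativity of complex conjugation. A preliminary observation I would use throughout: since $\pntr = -\uhat^2 \bmod (\uhat^3)$ and the correction factor is a unit in the completed ring $E(2)^*[[\uhat]]$, the ideals $(\pntr^k,\uhat^j)$ and $(\uhat^{2k+j})$ coincide. This dictionary between ``$\uhat$-order'' and ``$\pntr$-order'' legitimises every translation between modular statements in the lemma.

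I would first dispatch the routine items. The relation $c(\vhat_1) = \vhat_1$ holds because $\vhat_1 = v_1 v_2^{-3}$ and complex conjugation introduces an even number of sign flips; $c(v_2) = -v_2$ is the defining action. Substituting $\vhat_1 \uhat^2 \equiv -\vhat_1 \pntr \bmod (\uhat^3)$ into the first line of Lemma~\ref{lemma} gives the mod-$(\pntr\uhat)$ formula for $c(\uhat)$. The identities $d_1(v_2) = 2v_2^{-2}$ and $d_1(v_2\pntr) = 2v_2^{-2}\pntr$ fall out immediately once one notes that $c(\pntr) = c(\uhat)\cdot c(c(\uhat)) = c(\uhat) \cdot \uhat = \pntr$, so $\pntr$ is $c$-invariant.

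The three $d_1(\uhat)$ statements are the substance. For the mod-$\pntr$ version, I would apply $(1-c)$ using the newly established formula for $c(\uhat)$ and reduce to $2\uhat$. For the two mod-$2$ versions, I would start from
\[
(1-c)(\uhat) \;\equiv\; \vhat_1 \uhat^2 + \vhat_1^2 \uhat^3 + \vhat_2 \uhat^4 \pmod{(2, \uhat^5)},
\]
which follows directly from the second line of Lemma~\ref{lemma}. Modulo $(2,\pntr\uhat) = (2,\uhat^3)$ only $\vhat_1 \uhat^2$ survives, and $\uhat^2 \equiv \pntr \bmod 2$ converts this to $\vhat_1 \pntr$. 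Modulo $(2,\pntr^2\uhat) = (2,\uhat^5)$ I would invert the relation $\pntr = \uhat^2 + \vhat_1 \uhat^3 + \vhat_1^2 \uhat^4 \bmod (2,\uhat^5)$ iteratively to obtain $\uhat^4 \equiv \pntr^2$, $\uhat^3 \equiv \uhat\pntr + \vhat_1\pntr^2$, and $\uhat^2 \equiv \pntr + \vhat_1\uhat\pntr$, substitute these into the display, watch the $2\vhat_1^2\uhat\pntr$ cross term vanish, and set $\vhat_2 = 1$ to land on $\vhat_1\pntr + \vhat_1^3\pntr^2 + \pntr^2$. For $d_1(v_2\uhat)$, multiplicativity gives $(1-c)(v_2\uhat) = v_2\bigl(\uhat + c(\uhat)\bigr) \equiv v_2\vhat_1\uhat^2 \equiv 0 \pmod{\pntr}$.

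The main obstacle is the careful mod-$(2,\pntr^2\uhat)$ bookkeeping: one must iterate the $\uhat$-to-$\pntr$ substitution consistently, keep track of which $\uhat$-monomials survive after each round, and verify that all $2$-divisible cross terms cancel cleanly. Beyond that, every step amounts to routine substitution inside the completed formal power series ring.
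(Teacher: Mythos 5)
Your proposal is correct and takes essentially the same route as the paper: express $c(\uhat)$ and $\pntr$ as power series in $\uhat$ from Lemma~\ref{lemma}, then translate $\uhat$-powers back into $\pntr$-powers with careful mod-$2$ bookkeeping. The paper substitutes $\uhat^2 = \pntr + \vhat_1\uhat^3 + \vhat_1^2\uhat^4$ once into the $\vhat_1\uhat^2$ term and cleans up, whereas you iteratively invert to get $\uhat^2$, $\uhat^3$, $\uhat^4$ all in terms of $\pntr$ before substituting; these are equivalent computations arriving at the same cancellation.
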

\begin{proof}
There is one minor new thing here, the formula for $d_1(\uhat)$ mod $(2,\pntr^2 \uhat)$.
We do have $d_1(\uhat) = v_2^{-3}(\vhat_1 \uhat^2 + \vhat_1^2 \uhat^3 + \uhat^4)$ and
$\pntr = \uhat \, c(\uhat) = \uhat (\uhat + \vhat_1 \uhat^2 +\vhat_1^2 \uhat^3 )
= \uhat^2 + \vhat_1 \uhat^3 + \vhat_1^2 \uhat^4$.
Replace the $\uhat^2$ with $\pntr + \vhat_1 \uhat^3 + \vhat_1^2 \uhat^4$ to get
$d_1(\uhat) = v_2^{-3}(\vhat_1 \pntr + \vhat_1^2 \uhat^3 + \vhat_1^3 \uhat^4 
+ \vhat_1^2 \uhat^3 + \uhat^4)$.  Two the terms cancel out and, modulo higher terms,
$\uhat^4 = \pntr^2$.
\end{proof}

\section{A sketch of the approach}
\label{sketch}

The Bockstein spectral sequence for a general space $X$,  $E(2)^*(X)$ to $ER(2)^*(X)$,
concludes with $E_8 = 0$.  In the two cases of interest to us,
namely, $\Smash^n \cp$ and $MU(n)$, the spectral sequence is even
degree.  In fact, the only differentials are $d_1$, $d_3$, and $d_7$.
The last two are quite easy to do once $d_1$ has been computed. 
Although $d_1$ is complicated, we have an explicit algebraic formula for it.
We require a spectral sequence to compute $d_1$ though.

In this section, we sketch how we approach the computation of $d_1$
in our two cases.  We do this here without inserting the necessary 
technical details in hopes of clarifying our computations. 
When it comes time to actually do the computations, we can adjust what
we present here to be rigorous, and, in the process, add the gruesome 
technical details.

Our general description begins with an $E_1$ similar to the following:
$$
R\{ v_2^{0-7} \uhat^\ep\} \; \text{with} \; \uhat^\ep = 
\uhat_1^{\ep_1} 
\uhat_2^{\ep_2} 
\ldots
\uhat_n^{\ep_n} 
\quad \ep_k \le 1.
$$
Our $R$ has no torsion and $d_1$ commutes with $R$ and $v_2^2$. 

\begin{defn}
	\label{wt}
Define $W_j$ to be the set of $\uhat^\ep$ with $i_k = 0$ for $k < j$ and
$i_j = 1$.
We also include $W_{n+1}$ with all $i_k = 0$.
\end{defn}

So, we are dealing with something of the form
$$
R\{
v_2^{0-7} W_1,
v_2^{0-7} W_2,
\ldots,
v_2^{0-7} W_{n},
v_2^{0-7} W_{n+1}
\}
$$

We want to describe how our computations will go.
We put a filtration on 
$R\{ v_2^{0-7} \uhat^\ep\}$ and use it to produce a spectral sequence
for computing $d_1$.  
More on this later, but for now we do not need the details.

First, we have a differential $d_{1,0}$.  
We need a new definition:
$$
s(\ep) = \sum \ep_k.
$$

Our $d_{1,0}$
kills off lots of elements and 2. (Mod higher filtrations.)
$$
d_{1,0}(\uhat^\epsilon) 
= v_2^{-3} (\uhat^\epsilon - c(\uhat^\epsilon) )
$$
$$
= v_2^{-3} (\uhat^\epsilon - \prod_{\ep_k=1} c(\uhat_k) )
= v_2^{-3} (\uhat^\epsilon - (-1)^{s(\ep)} \uhat^\ep)
$$
So,
$$
\begin{array}{lcl}
d_{1,0}( \uhat^\epsilon) = 2 v_2^{-3} \uhat^\epsilon
&
\quad & s(\epsilon) \text{ odd}\\
d_{1,0}( \uhat^\epsilon) = 0 &
\quad &  s(\epsilon) \text{ even}
\end{array}
$$
With the $v_2$ in front, knowing $c(v_2) = - v_2$, we get
$$
\begin{array}{lcl}
d_{1,0}(v_2 \uhat^\epsilon) = 2 v_2^{-2} \uhat^\epsilon
&
\quad & s(\epsilon) \text{ even} \\
d_{1,0}(v_2 \uhat^\epsilon) = 0
& \quad & s(\epsilon) \text{ odd}
\end{array}
$$
To describe this answer though, we need some new notation:
$$
\begin{array}{lcl}
\voe  =  v_2 & \quad & s(\epsilon) \text{ odd} \\
\voe  =  1 & \quad & s(\epsilon) \text{ even} \\
\end{array}
$$

The end result of this $d_{1,0}$ is 
$$
E_{1,1} = 
R/(2)\{\voe v_2^{0,2,4,6} \uhat^\ep\}
$$
$$
=
R/(2)\{
\voe v_2^{0,2,4,6} W_1,
\voe v_2^{0,2,4,6} W_2,
\ldots,
\voe v_2^{0,2,4,6} W_{n},
v_2^{0,2,4,6} W_{n+1}
\}
$$
Note the lack of $\voe$ for $W_{n+1}$ because $s(\ep) = 0$ when all $\ep_k = 0$.

To compute the rest of $d_1$ we will inductively compute it on the $W_j$, calling
this $d_{1,j}$.  As it turns out, each $d_{1,j}$ is injective, so when
we have computed $d_{1,j}$ for all $ 0 < j \le n$, all we have left is a quotient
of
$$
R/(2)\{
v_2^{0,2,4,6} W_{n+1}
\}
$$
and this will be our associated graded object for $E_2$ of the Bockstein spectral sequence.

After $d_{1,0}$, we are always working mod (2) and in our cases we always have, roughly
speaking,
$c(\uhat_j) = \uhat_j + h_j$, with $h_j \in R/(2)$ in our as yet undefined
associated graded object from our as yet undefined filtration. 

We restrict our $d_1$, called $d_{1,1}$, on $E_{1,1}$ to $W_1$, i.e. elements with
$\ep_1 = 1$.
$$
d_{1}(\voe \uhat^\epsilon) 
=\voe v_2^{-3} (\uhat^\epsilon + c(\uhat^\epsilon) ) =
$$
$$
\voe v_2^{-3} (\uhat^\epsilon + \prod_{\ep_j=1} c(\uhat_j) )
=\voe v_2^{-3} (\uhat^\epsilon + \prod_{\ep_j=1} (\uhat_j+h_j ))
$$
The $\uhat^\ep$ cancel out and we must have at least one $h_j$, and when we
have it, we do not have the $\uhat_j$, so 
the contribution from $\uhat_j$ raises our (unnamed) filtration.
We show, in our cases, the the contribution from $j > 1$ raises the filtration
more than for $j = 1$ so we only need to look at the action on $\uhat_1$.

This becomes
$$
=\voe v_2^{-3} (h_1 \uhat^{\ep - \Delta_1})
$$
where $\Delta_j$ has a 1 in the $j$-th coordinate and zeros elsewhere. 
When $\ep_1 = 1$, we show that this is injective so our $E_{1,2}$
will be a quotient of
$$
R/(2)\{
\voe v_2^{0,2,4,6} W_2,
\ldots,
\voe v_2^{0,2,4,6} W_{n},
v_2^{0,2,4,6} W_{n+1}
\}
$$

This process continues with each $d_{1,j}$ injective giving us $E_{1,j+1}$ a
quotient of
$$
R/(2)\{
\voe v_2^{0,2,4,6} W_{j+1},
\ldots,
\voe v_2^{0,2,4,6} W_{n},
v_2^{0,2,4,6} W_{n+1}
\}
$$

\section{The  Filtration}
\label{secfilt}

By complex orientability, we have
$$
E(2)^*(\Smash^n \cp) \iso E(2)^*[[\uhat_1,\uhat_2,\ldots,\uhat_n]]
\{\uhat_1 \uhat_2 \cdots \uhat_n \}
$$
The class $\uhat \, c(\uhat)$ coming from $\pntr$ is a permanent cycle.
Let $\pntr_i$ be the class associated with the $i$-th copy of $\cp$
in our smash product.

Because $\pntr = - \uhat^2$ mod higher powers, we can replace
our description of $E(2)^*(\Smash^n \cp)$.  We need some notation
first.
\[
\begin{array}{lcl}
I = (i_1, i_2, \ldots, i_n)  & s(I) = \sum i_k & i_k \ge 0 \\
\epsilon = (\epsilon_1, \epsilon_2, \ldots, \epsilon_n)  
&
 s(\epsilon) = \sum \epsilon_k
& \ep_k = 0 \text{ or } 1
\end{array}
\]
Define
$$
\pntr^I \uhat^\epsilon =
\pntr_1^{i_1} \uhat_1^{\epsilon_1}
\pntr_2^{i_2} \uhat_2^{\epsilon_2}
\cdots
\pntr_n^{i_n} \uhat_n^{\epsilon_n}
$$
We have 
\[
E(2)^*(\Smash^n \cp) \iso E(2)^*\{ \pntr^I \uhat^\epsilon \}
\iso
\Zp[\vhat_1]\{ v_2^{0-7}\pntr^I \uhat^\epsilon \} 
\qquad i_k + \epsilon_k > 0
\]
We need to put our filtration on this.

\begin{defn}
	\label{order}
We put an order on the pairs $(I,\epsilon)$ as follows.
If the length of $(I',\ep')$, $\ell(I',\ep') = 2s(I')+s(\epsilon') > 2 s(I) + s(\epsilon)$
then 
$(I',\epsilon') > (I,\epsilon)$.
If $2s(I')+s(\epsilon') = 2 s(I) + s(\epsilon)$, 
$2{i'}_j + {\epsilon'}_j = 
2{i}_j + \epsilon_j$ 
for $k < j \le n$, and
$2{i'}_k + {\epsilon'}_k >
2{i}_k + \epsilon_k$ 
then 
$(I',\epsilon') > (I,\epsilon)$.

The $(I,\epsilon)$ now form an ordered set and we can use
them to give a filtration on $E(2)^*(\Smash^n \cp)$ 
as follows:
$$
F(I,\epsilon) = 
\Zp[\vhat_1]\{ v_2^{0-7}
\pntr^{I'} \uhat^{\epsilon' }
\}
\qquad  (I',\epsilon') > (I,\epsilon)
$$
\end{defn}

The associated graded object still looks the same:
$$
E_{1,0}(I,\epsilon)=E(2)^*(\Smash^n \cp) \iso
\Zp[\vhat_1]\{ v_2^{0-7}
\pntr^I \uhat^\epsilon 
\}
\qquad i_k + \epsilon_k > 0
$$
Note that in degrees $16*$, this is just $\Zp[\vhat_1]
\{\pntr^I \uhat^\epsilon \} $, 
$i_k + \epsilon_k > 0$.

In general, we will suppress the $(I,\ep)$ notation associated with
this filtration.  We will use it, but the associated graded
object will be implicit, not explicit.  A certain amount of clutter
is avoided without loss, we hope, of clarity.

\section{Computing $d_{1,0}$ for $\Smash^n \cp$}
\label{d10}

The setup of our computation in section \ref{sketch} now applies.
The zeroth differential is computed there giving us:

\begin{prop}
	After computing $d_{1,0}$ for $\Smash^n \cp$, we get
$$
E_{1,1} \iso 
\Zq[\vhat_1 ]\{\voe v_2^{0,2,4,6} \pntr^I \uhat^\epsilon \}
$$
with $i_k + \epsilon_k > 0$.
The $x^1$-torsion generators  detected by $d_{1,0}$ are represented by:
$$
\Zp[\vhat_1]\{2 \voe  v_2^{0,2,4,6} \pntr^I \uhat^\epsilon \}
=
\Zp[\vhat_1]\{ \voe  \alpha_i \pntr^I \uhat^\epsilon \}
$$
\end{prop}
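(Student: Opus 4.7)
The plan is to execute rigorously the informal $d_{1,0}$ computation sketched in Section \ref{sketch}, working modulo the filtration of Definition \ref{order}. First, I would verify that on the associated graded the complex conjugation acts by $c(\uhat_k) \equiv -\uhat_k$ and $c(\pntr_k) \equiv \pntr_k$. The second identity is automatic because $\pntr_k = \uhat_k\, c(\uhat_k)$ is built to be $c$-invariant. For the first, Lemma \ref{lemma2} gives $c(\uhat_k) = -\uhat_k - \vhat_1 \pntr_k$ modulo $\pntr_k \uhat_k$; both correction terms $\vhat_1 \pntr_k$ and $\pntr_k \uhat_k$ strictly increase the length $\ell(I,\ep) = 2s(I) + s(\ep)$, so they vanish on the associated graded at level $(I,\ep)$.

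Next, I would multiply over all indices $k$ with $\ep_k = 1$ and combine with $c(v_2) = -v_2$ to deduce
\[
c(v_2^j \pntr^I \uhat^\ep) \equiv (-1)^{j+s(\ep)} v_2^j \pntr^I \uhat^\ep
\]
on the associated graded. Applying $d_1 = v_2^{-3}(1-c)$ then yields
\[
d_{1,0}(v_2^j \pntr^I \uhat^\ep) = \begin{cases} 2 v_2^{j-3} \pntr^I \uhat^\ep, & j+s(\ep) \text{ odd,} \\ 0, & j+s(\ep) \text{ even.} \end{cases}
\]
Basis elements with $j+s(\ep)$ odd fail to be cycles and so die, while each target $2 v_2^{j-3} \pntr^I \uhat^\ep$ (for which $(j-3)+s(\ep)$ is even) is exactly in the image of $d_{1,0}$. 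Using the $v_2^8 = 1$ periodicity, the survivors modulo $2$ are precisely those indexed by $\voe v_2^{0,2,4,6} \pntr^I \uhat^\ep$, which produces the claimed description of $E_{1,1}$.

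For the $x^1$-torsion generators detected by $d_{1,0}$, I would simply read off the image of $d_{1,0}$, obtaining $\Zp[\vhat_1]\{2 \voe v_2^{0,2,4,6} \pntr^I \uhat^\ep\}$. The equality with $\Zp[\vhat_1]\{\voe \alpha_i \pntr^I \uhat^\ep\}$ then follows from the identification $\alpha_i = 2 v_2^{2i}$ recorded in Fact \ref{ffact}.

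The only real friction is bookkeeping: confirming that every correction term appearing in the expansion of $c(\uhat_k)$, and subsequently in the product expansion of $c(\pntr^I \uhat^\ep)$, strictly raises the length $\ell(I,\ep)$, so that it vanishes on the associated graded at level $(I,\ep)$. This is a direct check against the primary/secondary ordering in Definition \ref{order}, but it must be done carefully in order to justify the clean $d_{1,0}$ formula and, in turn, the stated form of $E_{1,1}$.
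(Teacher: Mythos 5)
Your proposal is correct and follows essentially the same route as the paper, which simply invokes the sketch of $d_{1,0}$ from Section \ref{sketch}; you make that sketch rigorous by explicitly checking that the correction terms in $c(\uhat_k)$ raise the length $\ell(I,\ep)$ and hence vanish on the associated graded, then running the $j + s(\ep)$ parity argument and reading off $E_{1,1}$ and the image of $d_{1,0}$.
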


\section{Computing $d_{1,1}$ for $\Smash^n \cp$}
\label{d11}

After $d_{1,0}$, we are working mod $(2)$.
Following section \ref{sketch}, we start our computation of $d_1$ on elements
with $\ep_1 = 1$.
The main formula we need now is:  $c(\uhat) = \uhat + \vhat_1 \pntr$ modulo
$(2, \pntr \uhat)$, where we are now invoking the filtration and
looking only at the representative in the associated graded object.
We call the map restricted to the $\uhat^\ep$ with $\ep_1 = 1$, $d_{1,1}$.
Our description of what happens is exactly what we developed in section \ref{sketch}.

Our $d_1$ commutes with $v_2^2$, $\pntr_i$ and $\voe$.
\[
\begin{split}
d_{1,1}(\voe \uhat^\epsilon) 
&= v_2^{-3}\voe (\uhat^\epsilon + c(\uhat^\epsilon) ) \\
& = v_2^{-3}\voe (\uhat^\epsilon + \prod_{\ep_k = 1} (\uhat_k + \vhat_1 \pntr_k)\uhat^{\ep - \Delta_k})\\
\end{split}
\]
The $\uhat^\ep$ cancels out and we always have some $\vhat_1 \pntr_k$.  If we use more than one of these,
the length of $(I,\ep)$ goes up higher than if we just use one, and so we can ignore those terms
in the spectral sequence.  In the case of any single $k > 1$, the filtration is already higher than
for $k = 1$. Since we are only looking at terms with $\ep_1 = 1$, modulo higher filtrations, 
this is just:
\[
=  v_2^{-3}\voe    \vhat_1 \pntr_1 \uhat^{\ep - \Delta_1}
\]

\begin{prop}
	After computing $d_{1,1}$ for $\Smash^n \cp$, we get
$$
E_{1,2} \iso 
\Zq\{\voe v_2^{0,2,4,6} \pntr^I \uhat^\epsilon \}
\quad \ep_1 = 0
$$
with $i_k + \epsilon_k > 0$.
The $x^1$-torsion generators detected by $d_{1,1}$ are represented by:
$$
\Zq[\vhat_1 ]\{ \voe v_2^{0,2,4,6}  \vhat_1 \pntr^I \uhat^\epsilon \}
\quad \ep_1 = 0
$$
\end{prop}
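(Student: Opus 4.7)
The plan is to read off the kernel and cokernel of $d_{1,1}$ directly from the formula
$$d_{1,1}(\voe \uhat^\ep) \;=\; v_2^{-3}\voe \vhat_1 \pntr_1 \uhat^{\ep - \Delta_1}$$
derived in the paragraph preceding the statement, extended $\Zq[\vhat_1]$-linearly using the fact that $d_1$ commutes with $\vhat_1$, $v_2^2$, each $\pntr_j$, and $\voe$. First I would split $E_{1,1}$ as $A \oplus B$, where $A$ is the free $\Zq[\vhat_1]$-submodule on the basis $\{\voe v_2^{0,2,4,6}\pntr^I \uhat^\ep : \ep_1 = 1\}$ and $B$ is the free $\Zq[\vhat_1]$-submodule on $\{\voe v_2^{0,2,4,6}\pntr^I \uhat^\ep : \ep_1 = 0\}$. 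Since by convention $d_{1,1}$ is the restriction of $d_1$ to $W_1 = \{\ep_1 = 1\}$, it is zero on $B$, and the entire computation reduces to analyzing $d_{1,1}|_A : A \to B$.

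Next I would observe that the shift $(I, \ep) \mapsto (I + \Delta_1, \ep - \Delta_1)$ is a bijection between the indexing sets of the two bases: the standing condition $i_k + \ep_k > 0$ forces $i_1 \geq 1$ whenever $\ep_1 = 0$, so every basis element of $B$ is hit. Modulo a short mod-$4$ exponent check using $v_2^8 = 1$ and the parity flip $\voe \leftrightarrow \voe'$ induced by $\ep \mapsto \ep - \Delta_1$, multiplication by $v_2^{-3}$ carries the source slice $v_2^{0,2,4,6}$ bijectively onto itself, so under this identification $d_{1,1}$ is, up to a unit, multiplication by $\vhat_1$ from $A$ into $B$. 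Because $\vhat_1$ is a non-zero-divisor on $\Zq[\vhat_1]$, the map is injective with image exactly the submodule $\vhat_1 B$. Therefore no element of $A$ survives in $E_{1,2}$; the image
$$\vhat_1 B \;=\; \Zq[\vhat_1]\{\voe v_2^{0,2,4,6}\vhat_1 \pntr^I \uhat^\ep : \ep_1 = 0\}$$
records the asserted $x^1$-torsion generators; and the cokernel is
$$E_{1,2} \;\iso\; B/\vhat_1 B \;\iso\; \Zq\{\voe v_2^{0,2,4,6}\pntr^I \uhat^\ep\}, \quad \ep_1 = 0,$$
as claimed.

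The main obstacle is justifying that, on the associated graded with respect to Definition \ref{order}, $d_{1,1}(\voe \uhat^\ep)$ really is represented by the single term $v_2^{-3}\voe \vhat_1 \pntr_1 \uhat^{\ep - \Delta_1}$ rather than by a sum of competing terms. This is the content of the derivation preceding the proposition: expanding $\uhat^\ep + c(\uhat^\ep) \equiv \sum_{\ep_k = 1}\vhat_1 \pntr_k \uhat^{\ep - \Delta_k} + (\text{higher})$ modulo $2$, any product using two or more $\vhat_1\pntr_k$ factors strictly increases the length $2s(I) + s(\ep)$, while a single $\vhat_1\pntr_k$ with $k > 1$ preserves length but strictly raises the rightmost differing coordinate in the lex-from-right ordering of Definition \ref{order}. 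Both types of correction therefore lie in strictly higher filtration than $\vhat_1 \pntr_1 \uhat^{\ep - \Delta_1}$, so the quoted formula is exact on the associated graded and the structural argument above goes through.
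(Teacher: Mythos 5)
Your proof is correct and takes essentially the same approach as the paper: apply the formula $d_{1,1}(\voe\,\uhat^\ep)=v_2^{-3}\voe\,\vhat_1\pntr_1\uhat^{\ep-\Delta_1}$ derived just before the proposition, and read off $E_{1,2}$ as the cokernel. The paper leaves the bookkeeping implicit, while you spell out the $A\oplus B$ decomposition, the shift bijection $(I,\ep)\mapsto(I+\Delta_1,\ep-\Delta_1)$, and the $v_2$-exponent check; your injectivity argument (``$\vhat_1$ is a non-zero-divisor on $\Zq[\vhat_1]$'') is exactly the implicit mechanism in the paper. One small wording point: a single $\vhat_1\pntr_k$ with $k>1$ does not ``preserve length'' relative to the original $(I,\ep)$ --- it increases length by $1$, the same as the $k=1$ term; the reason it can be discarded is that, at equal length, the lex-from-the-right ordering of Definition~\ref{order} puts it strictly higher than $\vhat_1\pntr_1\uhat^{\ep-\Delta_1}$. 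Your filtration analysis is otherwise right, and this is just a phrasing slip, not a gap.
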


Note that because we are in the smash product, $\ep_1 = 0$
implies that $i_1 > 0$.

As mentioned in section \ref{sketch}, $d_{1,1}$ is injective on terms with
$\ep_1 = 1$, so all remaining terms have $\ep_1 = 0$.

It might be premature to discuss such things, but the above
is consistent with the results for $ER(2)^*(\cp)$ from
\cite[Theorems 3.1 and 4.1]{Kitch-Lor-Wil-CPn}, i.e. the $n=1$ case,
even if, at first glance, they don't look the same.

\section{Computing $d_{1,j}$ for $\Smash^n \cp$}

By induction, when we start our work with $d_{1,j}$, 
we find that all we have left
are $\uhat^\ep$ with $i_1 = i_2 = \cdots = i_{j-1} = 0$.

We are still working mod 2.
The main new formula used in this section is equation (4.2) from \cite{Kitch-Lor-Wil-CPn}:
\begin{equation}
	\label{oldformula}
0 = \vhat_1 \uhat^2 + \vhat_2 \uhat^4 = \vhat_1 \pntr + \pntr^2
\qquad \mod (\pntr^2 \uhat)
\end{equation}
This formula requires a context.  It doesn't just apply anywhere.  We always
start out with $\vhat_1$ being non-zero and this formula can't be applied.  
This formula comes about because the image of $d_1$ is zero in $E_2$. 
Consequently, the formula only applies when $d_1$ (in the spectral sequence
for $d_1$ that is) comes along and ``kills'' the $\vhat_1$, which isn't really
dead in the sense that the formula above can tell us what it really is.
As it turns out, we inadvertently reprove the formula in \ref{oldformula2},
so if this explanation isn't satisfactory, you can find the details there.

\begin{prop}
	\label{propj}
	After computing $d_{1,j}$ for $\Smash^n \cp$, we get
$$
E_{1,j+1} \iso 
\Zq\{\voe v_2^{0,2,4,6} \pntr^I \uhat^\epsilon \}
$$
for $1 < j \le n$. We have $i_k + \epsilon_k > 0$. 
$$
\ep_k = 0\  \text{ for } k \le j,
\quad i_k = 1\  \text{ for } k < j 
$$
The $x^1$-torsion generators detected by $d_{1,j}$ are represented by:
$$
\Zq\{\voe v_2^{0,2,4,6} \pntr^I \uhat^\epsilon \}
\quad \ep_k = 0\  \text{ for } k \le j
$$
$$
i_k = 1\  \text{ for } k < j -1,
\quad i_{j-1} > 1  
$$
We have 
$E_{1,n+1} = E_{1,\infty}$, which is our associated graded object for 
the BSS
$E_2$ for computing $ER(2)^*(\Smash^n \cp)$ from $E(2)^*(\Smash^n \cp)$, is
$$
\qquad \Zq\{v_2^{0,2,4,6} \pntr^I  \}
\qquad i_k = 1 \  \text{ for } k < n 
$$
or
$$
\qquad \Zq\{v_2^{0,2,4,6} \pntr_1 \pntr_2 \ldots \pntr_{n-1}\pntr_n^{i_n}  \}.
$$
\end{prop}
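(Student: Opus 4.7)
The plan is to prove Proposition \ref{propj} by induction on $j$, with the base case $j = 1$ already settled in Section \ref{d11}. Inductively, assume $E_{1,j}$ is generated by $\voe v_2^{0,2,4,6} \pntr^I \uhat^\ep$ with $\ep_k = 0$ for $k < j$, $i_k = 1$ for $k < j-1$, and $i_k + \ep_k > 0$ (so in particular $i_{j-1} \ge 1$). The map $d_{1,j}$ is, by construction, the restriction of $d_1$ to the subspace with $\ep_j = 1$, i.e.\ those generators still carrying a $\uhat_j$ factor. The strategy has three parts: compute the leading term of $d_{1,j}$ in the associated graded, absorb the resulting $\vhat_1$ via (\ref{oldformula}), and verify injectivity so as to read off $E_{1,j+1}$ as the cokernel.

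For the leading term, since we work modulo $2$ after $d_{1,0}$, and $c(\pntr_k) = \pntr_k$, $c(v_2) \equiv v_2$, $c(\vhat_1) = \vhat_1$, the computation reduces to $(1+c)(\uhat^\ep)$. Expanding via Lemma \ref{lemma2},
\[
(1+c)(\uhat^\ep) = \sum_{\ep_k = 1} \vhat_1 \pntr_k \uhat^{\ep - \Delta_k} + (\text{terms with two or more } \pntr\text{-insertions}).
\]
The multi-insertion terms have strictly greater length $2s(I) + s(\ep)$ and lie in higher filtration by Definition \ref{order}; among the single-insertion terms, the reverse-lexicographic comparison identifies the smallest $k$ with $\ep_k = 1$, namely $k = j$, as the unique lowest-filtration representative. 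Hence modulo higher filtration,
\[
d_{1,j}\bigl(\voe v_2^{0,2,4,6} \pntr_1 \cdots \pntr_{j-2} \pntr_{j-1}^{i_{j-1}} \uhat_j M\bigr) = v_2^{-3} \voe v_2^{0,2,4,6} \vhat_1 \pntr_j \pntr_1 \cdots \pntr_{j-2} \pntr_{j-1}^{i_{j-1}} M,
\]
where $M$ is a monomial in $\pntr_k, \uhat_k$ for $k > j$.

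Now invoke (\ref{oldformula}), $\vhat_1 \pntr \equiv \pntr^2$ modulo $(2, \pntr^2 \uhat)$, to absorb the stray $\vhat_1$ into the $\pntr_{j-1}$ factor, which is present because of the smash-product constraint. Among the allowable substitutions $\vhat_1 \pntr_k \mapsto \pntr_k^2$ for any $k$ in the monomial, attaching the extra $\pntr$ at $k = j-1$ yields the unique lowest-filtration associated-graded representative: at common total length, reverse-lex comparison penalizes increases at larger indices. The image is therefore
\[
v_2^{-3} \voe v_2^{0,2,4,6} \pntr_1 \cdots \pntr_{j-2} \pntr_{j-1}^{i_{j-1}+1} \pntr_j M,
\]
with $i_{j-1} + 1 \ge 2$ and $i_j \ge 1$ in the target, matching the claimed $x^1$-torsion generators. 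Injectivity of $d_{1,j}$ is transparent: the assignment $(i_{j-1}, \uhat_j M) \mapsto (i_{j-1}+1, \pntr_j M)$ is invertible on the parametrizing data. Consequently $E_{1,j+1}$ is the complement of the image among $\ep_j = 0$ generators, namely elements with $i_{j-1} = 1$ together with all inherited constraints, matching the proposition. Iterating from $j = 2$ through $j = n$ yields $E_{1,n+1} = E_{1,\infty}$ in the stated form.

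The principal obstacle is disciplined filtration bookkeeping: every term dropped, including the multi-insertions in the $(1+c)$ expansion, the higher-order tails of $c(\uhat_k)$, and the error term $\pntr^2 \uhat$ in the substitution $\vhat_1 \pntr \equiv \pntr^2$, must lie in strictly higher filtration under Definition \ref{order}. The reverse-lexicographic ordering is precisely what selects $k = j$ as the lowest-filtration single insertion and, symmetrically, what selects $\pntr_{j-1}$ as the correct $\vhat_1$-partner rather than $\pntr_j$ or any larger-index $\pntr_k$. A subtler point is that (\ref{oldformula}) is not an identity in $E_1$ itself; as emphasized in the remarks preceding the formula, it holds only after the auxiliary spectral sequence has killed the $\vhat_1$, and the argument above is exactly the place where that phenomenon manifests.
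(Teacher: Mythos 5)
Your proof follows the same strategy as the paper's: use the filtration to read off the leading term of $d_1$ as $v_2^{-3}\vhat_1\voe\,\pntr^{I+\Delta_j}\uhat^{\ep-\Delta_j}$, then absorb the stray $\vhat_1$ via (\ref{oldformula}), and finally observe injectivity to identify the cokernel. The structure, the use of Lemma \ref{lemma2}, and the role of (\ref{oldformula}) all match the paper. However, there is a genuine gap at the decisive step: your explanation of \emph{why} the extra $\pntr$ must be attached at index $j-1$ rather than some $k<j-1$ does not work. You write that ``at common total length, reverse-lex comparison penalizes increases at larger indices'' and conclude that $k=j-1$ gives the unique lowest-filtration representative. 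But that very observation cuts the other way: under Definition \ref{order} the comparison starts from the top index $n$ and moves downward, so adding $\Delta_k$ at a \emph{smaller} $k$ yields a \emph{lower}-filtration element. By your filtration argument alone, $k=1$ would be preferable to $k=j-1$, which is not the answer you (correctly) want.

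The missing ingredient, which is exactly what the paper's proof makes explicit, is that the inductive description of $E_{1,j}$ forces $i_k=1$ for all $k<j-1$, so $\pntr_k^2$ is \emph{zero} in the associated graded $E_{1,j}$ whenever $k<j-1$. Thus the substitution $\vhat_1\pntr_k\mapsto\pntr_k^2$ produces nothing in $E_{1,j}$ for those $k$; the smallest index where $\pntr_k^2$ survives is $k=j-1$, and that is why the differential lands on $w_{I+\Delta_j+\Delta_{j-1},\ep-\Delta_j}$. Your phrase ``allowable substitutions'' may have been intended to encode this, but as written the passage reads as though every $k$ is in play, and then your filtration estimate argues in the wrong direction. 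Make the vanishing of $\pntr_k^2$ for $k<j-1$ explicit and the argument is complete and matches the paper's proof.
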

Note that 
if all $\ep_k = 0$, $s(\ep)$ is even.

\begin{proof}
We have already computed $E_{1,2}$, so our induction is started. 
Assume we have computed $E_{1,j'+1}$ and $d_{1,j'}$ for $j' < j$.
We need to compute $d_{1,j}$ on $E_{1,j}$ to get $E_{1,j+1}$.
We compute $d_{1,j}$ only on those $\uhat^\ep$ with $i_j = 1$.

We use our filtration to get $d_1(\uhat)=v_2^{-3} \vhat_1 \pntr$ from lemma \ref{lemma2}.
As in the case of $j=1$,
when $\ep_j = 1$, $d_1$ applied to a $\uhat_k$, with $k > j$, increases the filtration
more than $d_1$ applied to $\uhat_j$ does.
This gives:
$$
d_{1,j} (\voe v_2^{0,2,4,6} \pntr^I \uhat^\ep) 
= 
v_2^{-3} \vhat_1 \voe v_2^{0,2,4,6} \pntr^{I+\Delta_j} \uhat^{\ep-\Delta_j}
$$
Unfortunately, $\vhat_1$ doesn't show up in in the associated
graded object for $E_{1,j}$ so we need to
find an equivalent element that represents this.
Note that $i_k = 1$ for $k < j-1$, and $i_{j-1} > 0$. 
We use formula 
\ref{oldformula},  
$\vhat_1 \pntr = \pntr^2$.  The lowest $i$ with $\pntr_i^2 \neq 0$ in $E_{1,j}$
is $i = j-1$, so, this term is, mod higher filtrations, represented by:
$$
v_2^{-3} \voe v_2^{0,2,4,6} \pntr^{I+\Delta_j + \Delta_{j-1}} \uhat^{\ep-\Delta_j}.
$$
The result follows.
\end{proof}

\begin{remark}
At this stage, we are done with $d_1$ for degree reasons and have computed
$E_2$ for theorem \ref{smash}.
\end{remark}

\section{Summary of the $x^1$-torsion generators for $ER(2)^*(\Smash^n \cp)$}

We just collect from the previous sections:

\begin{thm}
Representatives for the $x^1$-torsion generators in our associated graded object for
$ER(2)^*(\Smash^n \cp)$ are given by:
$$
\Zp[\vhat_1]\{ \voe  \alpha_i \pntr^I \uhat^\epsilon \}
\quad
0 \le i < 4
$$
$$
\Zq[\vhat_1 ]\{ \voe v_2^{0,2,4,6}  \vhat_1 \pntr^I \uhat^\epsilon \}
\quad \ep_1 = 0
$$
For $1 < j \le n$,
$$
\Zq\{\voe v_2^{0,2,4,6} \pntr^I \uhat^\epsilon \}
\quad \ep_k = 0\  \text{ for } k \le j
$$
$$
i_k = 1\  \text{ for } k < j -1,
\quad i_{j-1} > 1  
$$
\end{thm}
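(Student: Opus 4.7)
The plan is simply to concatenate the three propositions from the preceding sections, since by design the filtration spectral sequence for $d_1$ computes the $x^1$-torsion generators stage by stage. The $x^1$-torsion generators of $ER(2)^*(\Smash^n \cp)$ are, by definition, the image of $d_1$ in the Bockstein spectral sequence, and our auxiliary spectral sequence decomposes this image according to which $d_{1,j}$ detects it. So the task reduces to listing all three chunks.

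First I would invoke the proposition at the end of section \ref{d10}: $d_{1,0}$ kills the $v_2^{\text{odd}}$ part of $E(2)^*(\Smash^n \cp)$ using $d_1(v_2) = 2v_2^{-2}$, contributing the generators $\Zp[\vhat_1]\{\voe \alpha_i \pntr^I \uhat^\epsilon\}$, $0 \le i < 4$, where the $\alpha_i$ come from lifting $2 v_2^{2i}$ into $ER(2)^*$. Next I would invoke the proposition at the end of section \ref{d11}: restricted to terms with $\epsilon_1 = 1$, the differential $d_{1,1}$ sends $\voe \uhat^\epsilon$ to $v_2^{-3} \voe \vhat_1 \pntr_1 \uhat^{\epsilon - \Delta_1}$ modulo higher filtration, and this map is injective, so its image represents the second family $\Zq[\vhat_1]\{\voe v_2^{0,2,4,6} \vhat_1 \pntr^I \uhat^\epsilon\}$ with $\epsilon_1 = 0$. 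Finally I would invoke Proposition \ref{propj}: for each $1 < j \le n$, $d_{1,j}$ is injective on the $\epsilon_j = 1$ part of $E_{1,j}$, and the image, rewritten via the relation $\vhat_1 \pntr = \pntr^2$ from \eqref{oldformula}, lands in the third family of generators with the constraints $\epsilon_k = 0$ for $k \le j$, $i_k = 1$ for $k < j - 1$, and $i_{j-1} > 1$.

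The key observation making the collection exhaustive is that each $d_{1,j}$ was shown to be injective on the piece of $E_{1,j}$ supported on $\epsilon_j = 1$, so the targets it produces are genuinely new $x^1$-torsion generators that were not already recorded by a $d_{1,j'}$ for $j' < j$. The complementary piece with $\epsilon_j = 0$ becomes the next $E_{1,j+1}$, and after the last stage the surviving $E_{1,n+1}$ consists of terms with all $\epsilon_k = 0$, which are permanent cycles (in the sense of this auxiliary spectral sequence) and thus contribute the $E_2$ page of the Bockstein spectral sequence, not to the $x^1$-torsion. Consequently, the three families above exhaust the $x^1$-torsion generators, and the theorem follows by direct concatenation.

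The only thing that one might flag as a potential obstacle — verifying that the filtration really refines the computation and no cross-terms between stages get missed — has already been absorbed into the proofs of the three constituent propositions, where the filtration was designed so that contributions from $\uhat_k$ with $k > j$ strictly raise filtration relative to the contribution from $\uhat_j$. Thus no additional bookkeeping is required beyond assembling the three lists.
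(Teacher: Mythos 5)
Your proposal is correct and takes the same approach as the paper: the paper's own ``proof'' is literally the sentence ``We just collect from the previous sections,'' and the theorem is a direct concatenation of the propositions recording the $x^1$-torsion detected by $d_{1,0}$, $d_{1,1}$, and $d_{1,j}$ ($1<j\le n$). Your additional remarks about injectivity of each $d_{1,j}$ on the $\ep_j=1$ part and the disjointness of targets correctly articulate why the concatenation is exhaustive, which the paper leaves implicit.
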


\section{Computing $d_3$ and $d_7$ for $\Smash^n \cp$}

We have finished our computation of $d_1$ and we get  $E_2 = E_3$. 

\begin{prop}
	Our associated graded version of the BSS $E_4$
 for computing $ER(2)^*(\Smash^n \cp)$ from $E(2)^*(\Smash^n \cp)$ is
$$
E_4 = E_5 = E_6 = E_7 = 
\Zq\{v_2^{0,4} \pntr^I \} 
\qquad i_k = 1 
$$
The $x^3$-torsion generators are represented by
$$
\Zq\{v_2^{0,4} \pntr^I \} 
\qquad i_k = 1\  \text{ for } k < n 
\qquad i_n > 1 
$$
\end{prop}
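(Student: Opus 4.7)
The plan is to leverage the $E_2=E_3$ computation already obtained in Proposition \ref{propj} — namely $\Zq[\pntr_n]\{v_2^{0,2,4,6}\pntr_1\pntr_2\cdots\pntr_n\}$ — and propagate the coefficient-level formulas from Remark \ref{coef} through it. Since the $\pntr_k$ are permanent cycles and $d_3$ commutes with $\vhat_1$ and $v_2^4$, the only new input needed beyond the coefficient calculation is how to re-express the image of $d_3$, which a priori involves $\vhat_1$, in terms of the associated graded $E_2$ generators (which do not contain $\vhat_1$).

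First I would write out $d_3$ on each class $v_2^{2k}\pntr_1\cdots\pntr_n^{i_n}$ using the derivation-style identities from the coefficient BSS: $d_3(v_2^0\cdot\bullet)=0$, $d_3(v_2^2\cdot\bullet)=\vhat_1 v_2^{-4}\cdot\bullet$, $d_3(v_2^4\cdot\bullet)=0$, and $d_3(v_2^6\cdot\bullet)=\vhat_1\cdot\bullet$. So $v_2^{0,4}\pntr_1\cdots\pntr_n^{i_n}$ lies in the kernel, while $v_2^{2,6}\pntr_1\cdots\pntr_n^{i_n}$ is sent to a $\vhat_1$-multiple.

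Next, exactly as in the proof of Proposition \ref{propj}, invoke the filtration-graded identity $\vhat_1\pntr=\pntr^2$ from Equation \ref{oldformula} to represent $\vhat_1\cdot\pntr_1\cdots\pntr_n^{i_n}$ inside the associated graded object. The crucial point is that on the $E_2$ page we have $i_k=1$ for $k<n$ and only $\pntr_n$ is free to carry higher powers, so the $\vhat_1$ must be absorbed into $\pntr_n$, yielding $\pntr_1\pntr_2\cdots\pntr_{n-1}\pntr_n^{i_n+1}$. With $i_n\geq 1$, the exponent of $\pntr_n$ in the image is $\geq 2$, matching the stated $x^3$-torsion generator description $\Zq[\pntr_n]\{v_2^{0,4}\pntr_1\pntr_2\cdots\pntr_n^2\}$ (using $v_2^{-4}=v_2^4$ in the $\Z/48$-grading).

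Passing to the quotient $E_4$: the kernel of $d_3$ is $\Zq[\pntr_n]\{v_2^{0,4}\pntr_1\cdots\pntr_n^{i_n}\}$ with $i_n\geq 1$, and dividing by the image (which hits exactly the terms with $i_n\geq 2$) leaves the $i_n=1$ slice, giving $E_4=\Zq\{v_2^{0,4}\pntr_1\pntr_2\cdots\pntr_n\}$ as claimed. Finally, since $d_4=d_5=d_6=0$ for degree parity reasons (all generators sit in even degree, as noted earlier), one has $E_4=E_5=E_6=E_7$. The main obstacle is verifying that the filtration really does force the $\vhat_1$ factor onto $\pntr_n$ rather than some other $\pntr_k$ — but this follows from the ordering in Definition \ref{order} together with the constraint $i_k=1$ for $k<n$ in $E_2$, so no new technique is required beyond what was used for $d_{1,j}$.
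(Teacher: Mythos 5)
Your proposal is correct and follows essentially the same route as the paper's proof: identify the coefficient action $d_3(v_2^2)=\vhat_1 v_2^{-4}$, use that the $\pntr_k$ are permanent cycles, and then absorb the resulting $\vhat_1$ via the relation $\vhat_1\pntr_n=\pntr_n^2$, noting that the constraint $i_k=1$ for $k<n$ on the $E_2$ page rules out applying the relation at any other index. One small imprecision: you attribute the vanishing of $d_4,d_5,d_6$ jointly to ``degree parity,'' but $d_5$ has even degree ($17\cdot 5+1=86$), so parity alone does not kill it; it vanishes by a finer degree check modulo $48$, which is what the paper's ``for degree reasons'' is actually covering.
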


\begin{remark}
	By definition, all $x^i$-torsion generators inject into $E(2)^*(-)$.
In particular, the $x^1$-torsion generators (all are of even degree) inject.
The $x^3$-torsion generators are all in degrees $8*$.
	The degree of $x$ is -1 mod (8) so for $x^3$-torsion, we only have
	$x$ and $x^2$ times elements in degree $8*$.  
Consequently, all of the elements in degrees $4*$ that
	we have studied so far inject.   Lots of elements have
$x^2$ times them non-zero, so there are 
many elements in degrees $ -2$ mod (8)
that don't inject.
\end{remark}

\begin{proof}
For degree reasons there is no more to $d_1$ and there is no $d_2$.
All of the $\pntr_k$ are permanent cycles (\cite[Proposition 5.1]{Vitaly}) and $d_3$ commutes with
$v_2^4$,
so the computation
of $d_3$ is based on:
$$
d_3(v_2^2) = \vhat_1 v_2^{-4}
$$
from the action on the coefficients.
We have the relation:
$$
\vhat_1 \pntr_n = \pntr_n^2
$$
Because $i_k = 1$ for $k < n$, we have
$$
d_3(v_2^2 \pntr^I)
=
v_2^{-4} \vhat_1 \pntr^I
=
v_2^{-4} \pntr^{I+\Delta_n}
$$
From this we get our $E_4 = E_5 = E_6 = E_7$ (for degree reasons). 
\end{proof}

Starting with our $E_7$ and recalling
from the coefficients that
$$
d_7(v_2^4) = 1,
$$
we get:

\begin{prop}
	For the BSS 
 for computing $ER(2)^*(\Smash^n \cp)$ from $E(2)^*(\Smash^n \cp)$, we have 
	 $E_8 = 0$.
	The $x^7$-torsion generator is 
	$$
	\Zq\{\pntr_1 \pntr_2 \ldots \pntr_n \} 
	$$
This generator is in degree $16*$ ($-32n=16n$).
\end{prop}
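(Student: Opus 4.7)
The plan is to simply run $d_7$ on the two-generator $\Z/(2)$-module $E_7$ that we already have in hand. By the previous proposition, our associated graded $E_7$ is
$$
\Zq\{v_2^{0,4}\, \pntr_1 \pntr_2 \cdots \pntr_n\},
$$
so it is free on precisely the two classes $\pntr_1 \pntr_2 \cdots \pntr_n$ and $v_2^4 \pntr_1 \pntr_2 \cdots \pntr_n$. It therefore suffices to show that $d_7$ carries the second onto the first; then nothing survives to $E_8$, and by definition the image of $d_7$ is the $x^7$-torsion generator.

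First I would invoke that each $\pntr_k$ is a permanent cycle in the BSS, citing \cite[Proposition 5.1]{Vitaly}. Because permanent cycles commute with every differential (this is the usual Leibniz/module argument for the Bockstein spectral sequence, where $d_r$ is $ER(2)^*$-linear up to the filtration on the source), the calculation reduces to knowing $d_7(v_2^4)$. For this I would quote Remark \ref{coef} on the coefficient BSS, where $d_7(v_2^4) = \vhat_2 v_2^{-8} = 1$ (using the normalization $\vhat_2 = 1$ that makes everything $\Z/(48)$-graded). Pulling the $\pntr_1 \pntr_2 \cdots \pntr_n$ through then gives
$$
d_7\bigl(v_2^4\, \pntr_1 \pntr_2 \cdots \pntr_n\bigr) = \pntr_1 \pntr_2 \cdots \pntr_n,
$$
which kills both $E_7$-generators at once. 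Hence $E_8 = 0$ and the class $\pntr_1 \pntr_2 \cdots \pntr_n$ represents the unique $x^7$-torsion generator.

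The degree check is a short arithmetic sanity check: each $\pntr_k$ lives in $ER(2)^{-32}$, so $\pntr_1 \pntr_2 \cdots \pntr_n$ lies in degree $-32n$, and since we are working modulo $48$ we have $-32n \equiv 16n \pmod{48}$, putting the generator in degrees $16*$ as claimed.

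The only real subtlety — and the point I would write carefully rather than gloss over — is the compatibility of $d_7$ with the permanent cycles $\pntr_k$ after having passed to the associated graded object of our auxiliary filtration used to compute $d_1$. However, this filtration was only needed for computing $d_1$; by the time we reach $E_4 = \cdots = E_7$ the spectral sequence has collapsed to a genuine $\Z/(2)$-module on honest classes $v_2^{0,4}\,\pntr^I$ where every $\pntr_k$ is a permanent cycle in the full BSS, so the Leibniz argument applies without further bookkeeping. With this verification in place, nothing else stands between us and the claimed conclusion.
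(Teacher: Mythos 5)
Your argument matches the paper's own proof: both reduce $d_7$ to the coefficient computation $d_7(v_2^4)=1$ from Remark \ref{coef}, use that the $\pntr_k$ are permanent cycles, and conclude $E_8=0$ with $\pntr_1\pntr_2\cdots\pntr_n$ as the sole $x^7$-torsion generator. Your added remark about the auxiliary filtration being irrelevant once one is past $d_1$ is a correct and worthwhile clarification, but it is not a departure from the paper's reasoning.
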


\begin{remark}
	The only element divisible by $x$ in degree 4 mod (8) is
	$x^4 \pntr_1 \pntr_2 \ldots \pntr_n$.
	Consequently, it is the only element in the kernel of the map
	in degrees $4*$.  
Similarly, 	
	$x^6 \pntr_1 \pntr_2 \ldots \pntr_n$
	is the only element in degree $8*-6$ divisible by $x$.
	This concludes the proof of theorem \ref{smash}
	and part of theorem \ref{inj}, and the remark that follows.
	If you want particularly clean statements, stick with elements
	in degree $16*$.  
	In all our statements, just require $s(\ep)$ to be even and ignore
	the $v_2^{2,4,6}$.
	Historically, those are the only elements
	that have mattered to us, but it takes so little effort to 
	get the injection for $8*$, it seems obligatory.
	Here we still require $s(\ep)$ to be even, but we only ignore $v_2^{2,6}$,
	leaving $v_2^{0,4}$.
\end{remark}

\section{Preliminaries for $BU(n)$}
\label{startbu}

Because of
the stable splitting, $BU(n) = MU(n)\vee BU(n-1)$, 
\cite{MitPrid},
we can compute
$ER(2)^*(MU(n))$   
instead of $ER(2)^*(BU(n))$. 

So, rather than study the map
$$
{\sf X}^n \cp \lra BU(n)
$$
we will mainly look at:
$$
\Smash^n \cp \lra MU(n)
$$

Because $E(2)$ is a complex orientable theory, we have
the usual
$$
E(2)^*(BU(n)) \iso E(2)^*[[c_1,c_2,\ldots,c_n]]
$$
where the $c_k$ are the generalized Conner-Floyd Chern classes.
To see $E(2)^*(MU(n))$, we just look at the ideal generated
by $c_n$.  So, we have:
$$
E(2)^*(MU(n)) \iso E(2)^*[[c_1,c_2,\ldots,c_n]]\{c_n\}
$$
We need to `hat' these Chern classes just as we did with
$\uhat$ for $\cp$.  Define (keeping in mind that $v_2$ is a unit):
$$
\chat_k = v_2^{3k} c_k.
$$
This puts $\chat_k$ in degree $2k - 18k = -16k$.

We need to use the well-known fact that for complex oriented
theories, $G^*(BU(n))$ injects into 
$G^*( {\sf X}^n \cp)$.
Each $c_k$, or, respectively, $\chat_k$, goes to the $k$-th symmetric
function on the $u_i$, respectively, $\uhat_i$.
Similarly for the the map of the smash product to $MU(n)$.  Here, we
have $\chat_n$ goes to $\uhat_1 \uhat_2 \cdots \uhat_n$.

For $J = (j_1,j_2,\ldots,j_n)$, let
$$
\chat^J = \chat_1^{\,j_1} \chat_2^{\,j_2} \cdots \chat_n^{\,j_n}.
$$
We can write
$$
E(2)^*(MU(n)) \iso E(2)^*\{\chat^J \}
\qquad j_n > 0.
$$
We can view 
$$
E(2)^*(MU(n)) \subset E(2)^*(\Smash^n \cp)
$$
and we know how to write elements of $E(2)^*(\Smash^n \cp)$ as
$$
\pntr^I \uhat^\epsilon =
\pntr_1^{i_1} \uhat_1^{\epsilon_1}
\pntr_2^{i_2} \uhat_2^{\epsilon_2}
\cdots
\pntr_n^{i_n} \uhat_n^{\epsilon_n}
$$
Every element $z \in E(2)^*(\Smash^n \cp)$ can be written as a sum of such elements
(with coefficients).  These elements are ordered using the order on $(I,\ep)$
from \ref{order}.
\begin{defn}
	The {\bf lead term} of $z \in E(2)^*(\Smash^n \cp)$ is the term of lowest order.
\end{defn}
The lead term of any symmetric function must be of the form
$\pntr^I \uhat^\epsilon$ 
with
$$
2i_1 + \ep_1 \ge \cdots \ge
2i_k + \ep_k \ge 2i_{k+1} + \ep_{k+1} \ge \ldots \ge 2i_n + \ep_n > 0
$$
\begin{defn}
We call this {\bf property  A} and use it constantly from here on, but
without having to repeat the above often.
\end{defn}
Although many symmetric functions could have the same lead term, given
a 
$\pntr^I \uhat^\epsilon$ 
with \A, we can construct a unique symmetric function, $w_{I,\ep}$,
with this as its lead term.
Our $\ww$ is just the sum of all distinct permutations of our
$\pntr^I \uhat^\epsilon$, keeping in mind that the $\pntr_i$ and the $\uhat_i$ move together.
These symmetric functions $\ww$ generate $E(2)^*(MU(n)) \subset E(2)^*(\Smash^n \cp)$.
Our computations will take place entirely in this image.

Consider
$$
E(2)^*(MU(n)) \iso  \Zp[\vhat_1]
\{
	v_2^{0-7} w_{I,\ep} 
\}
\subset E(2)^*(\Smash^n \cp).
$$
For our filtration,
we just use the order, \ref{order}, on the $(I,\ep)$,
which all have \A.  This is
the same as using it on the lead term.

Recall that $d_1$ commutes with $\pntr_i$, and $v_2^2$.

Similar to the computation in section \ref{d10}, we can compute
$d_{1,0}$ (modulo higher filtration) on every term of $w_{I,\ep}$ to get
\[
	\begin{split}
		d_{1,0}(w_{I,\ep}) & = 2 v_2^{-3} w_{I,\ep}
 \quad  s(\ep) \text{ odd } \\
 d_{1,0}(v_2 w_{I,\ep})  & = 2 v_2^{-2} w_{I,\ep}
 \quad  s(\ep) \text{ even }
 \end{split}
 \]

\begin{prop}
	With \A, after computing $d_{1,0}$ for $MU(n)$, we have:
$$
E_{1,1} \iso 
\Zq[\vhat_1 ]\{\voe v_2^{0,2,4,6} w_{I,\ep} \}
$$
The $x^1$-torsion generators detected by $d_{1,0}$ are represented by:
$$
\Zp[\vhat_1]\{2 \voe  v_2^{0,2,4,6} w_{I,\ep} \} = 
\Zp[\vhat_1]\{ \voe  \alpha_i w_{I,\ep} \}
$$
\end{prop}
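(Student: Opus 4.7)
The plan is to mirror section \ref{d10}, replacing the monomial basis $\{\pntr^I \uhat^\ep\}$ with the symmetric-function basis $\{w_{I,\ep}\}$. Concretely, I will verify that on the associated graded object complex conjugation acts on $w_{I,\ep}$ by the sign $(-1)^{s(\ep)}$, apply $d_1 = v_2^{-3}(1-c)$ together with $c(v_2)=-v_2$ to obtain the two displayed identities for $d_{1,0}$ that sit just above the proposition, and then read off $E_{1,1}$ and the $x^1$-torsion image exactly as in the coefficient calculation of Remark \ref{coef}.

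The first substantive step is to check that $c(w_{I,\ep}) \equiv (-1)^{s(\ep)} w_{I,\ep}$ modulo higher filtration. Lemma \ref{lemma2} gives $c(\uhat_k) = -\uhat_k$ modulo $(\pntr_k \uhat_k)$, while $c$ fixes both $\vhat_1$ and $\pntr_k$ (the latter since $\pntr_k$ lifts to $ER(2)^*$). Applied termwise to a monomial $\pntr^I \uhat^\ep$, this yields $(-1)^{s(\ep)} \pntr^I \uhat^\ep$ plus error terms in which at least one $\uhat_k$ (with $\ep_k=1$) has been replaced by $\vhat_1 \pntr_k$ or higher. Each replacement strictly increases the total length $\ell(I,\ep) = 2s(I) + s(\ep)$, and hence strictly raises the filtration of Definition \ref{order}. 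Since $\ell$ is permutation invariant, summing over the permutations that define $w_{I,\ep}$ preserves this inequality term by term and yields the claim.

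Combining this with $c(v_2) = -v_2$ and $d_1 = v_2^{-3}(1-c)$ directly produces the two displayed identities $d_{1,0}(w_{I,\ep}) = 2v_2^{-3} w_{I,\ep}$ for $s(\ep)$ odd and $d_{1,0}(v_2 w_{I,\ep}) = 2v_2^{-2} w_{I,\ep}$ for $s(\ep)$ even. Because $d_{1,0}$ commutes with $v_2^2$, $\vhat_1$, and (to leading order) the $w_{I,\ep}$, the differential restricted to each summand $\Zp[\vhat_1]\{v_2^{0-7} w_{I,\ep}\}$ reproduces the coefficient calculation of Remark \ref{coef}, shifted by $\voe$ according to the parity of $s(\ep)$. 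Multiplication by $2$ is injective on these free $\Zp[\vhat_1]$-modules, so the image of $d_{1,0}$ is the free $\Zp[\vhat_1]$-module $\Zp[\vhat_1]\{2 \voe v_2^{0,2,4,6} w_{I,\ep}\} = \Zp[\vhat_1]\{\voe \alpha_i w_{I,\ep}\}$, and the cokernel is $\Zq[\vhat_1]\{\voe v_2^{0,2,4,6} w_{I,\ep}\}$, precisely as claimed.

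The main obstacle is the bookkeeping in the second paragraph: one must verify that no error term coming from the expansion of $c(\uhat_k)$ contributes at equal or lower filtration once symmetrization is applied. This reduces to the combinatorial observation that each such replacement strictly raises $\ell$, a quantity insensitive to the lexicographic tiebreaker in Definition \ref{order}; everything else is a direct transfer of the coefficient-level argument already carried out in section \ref{d10} for $\Smash^n \cp$.
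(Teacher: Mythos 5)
Your proposal is correct and follows essentially the same approach the paper takes: apply $d_{1,0}$ termwise to the monomials of the symmetric function $w_{I,\ep}$, observe that the error terms from $c(\uhat_k)$ strictly raise the permutation-invariant length $\ell(I,\ep)$ and hence the filtration, and then transfer the coefficient-level Bockstein computation. One small slip: Lemma \ref{lemma2} gives $c(\uhat_k) = -\uhat_k - \vhat_1 \pntr_k$ modulo $(\pntr_k \uhat_k)$, so the statement ``$c(\uhat_k) = -\uhat_k$ modulo $(\pntr_k \uhat_k)$'' should read modulo $(\pntr_k)$ --- but you correctly identify the leading error term as $\vhat_1 \pntr_k$ in the following sentence, so the argument is unaffected.
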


\section{Different descriptions of $E(2)^*(MU(n))$}
\label{transition}

We find it easiest to make our computations with the $\ww \in E(2)^*(\Smash^n \cp)$,
but it would be more traditional to think in terms of Chern classes in $E(2)^*(MU(n))$.
So, we now show how to
relate the $\chat^J$ to the $w_{I,\ep}$.

In the product, the image of $\chat_k$ is the $k$-th symmetric function on the
$\uhat_i$.  The lead term in the sum that makes up the
symmetric function is:
$$
\uhat(k) = \uhat_1 \uhat_2 \cdots \uhat_k.
$$

Modulo higher terms in the filtration, we have $\uhat^2 = - \uhat\, (- \uhat)
= - \uhat \, c(\uhat) = - \pntr$, so,
in the smash product, 
the lead term of the image of $\chat_k \chat_n$ 
is (modulo higher terms):
$$
\uhat(k)\uhat(n)=
\uhat_1^2 \uhat_2^2 \cdots \uhat_k^2 \uhat_{k+1} \cdots \uhat_n =
(-1)^k \pntr_1 \pntr_2 \cdots \pntr_k \uhat_{k+1} \uhat_{k+2} \cdots \uhat_n
$$

We have the lead term of the image of $c^J$ 
$$
\uhat(1)^{j_1}
\uhat(2)^{j_2}
\cdots
\uhat(n)^{j_n}
\lra
\uhat_1^{\sum_{i=1}^n  j_i}
\uhat_2^{\sum_{i=2}^n  j_i}
\cdots
\uhat_k^{\sum_{i=k}^n  j_i}
\cdots
\uhat_n^{ j_n}
$$

We prefer to replace all the $\chat_k^2$ with the 
Pontryagin classes $\Pntr_k$ from the introduction.
Recall, they map to $\chat_k \, c(\chat_k) \in E(2)^{-32k}(BU(n))$. 
Complex conjugation, $c$, is defined on $E(2)$.  It can be computed
for $E(2)^*(MU(n))$ by naturality since we know it on 
$E(2)^*(\cp)$.

Note for later purposes that $d_1(\Pntr_k) = 0$ because $d_1$ is determined by
the conjugation, $c$.
Also note that the lead term of the image of $\Pntr_k$ is 
$(-1)^k \uhat(k)^2 = \pntr_1 \pntr_2 \ldots \pntr_k$.

We rewrite
$$
E(2)^*(MU(n)) \iso
E(2)^*\{
\Pntr_1^{k_1} \chat_1^{\,r_1}
\Pntr_2^{k_2} \chat_2^{\,r_2}
\ldots
\Pntr_n^{k_n} \chat_n^{\,r_n}
\}
\quad 
0 < 
k_n + r_n 
\quad 
r_k \le 1
$$
or, simply as:
$$
E(2)^*(MU(n)) \iso
E(2)^*\{\Pntr^K \chat^{\,r}\}
\quad 
0 < 
k_n + r_n 
$$

Define $s_i$, $e_i$, $g_i$ and $\ep_i$ as follows: 
$$
s_i = k_i + k_{i+1} + \cdots + k_n
 \;
\text{ and }
 \;
e_i = r_i + r_{i+1} + \cdots + r_n 
=
2g_i + \ep_i
\quad
\ep_i \le 1.
$$
Let $i_j = s_j + g_j$, then,
using our injection, we have, modulo higher filtration: 
$$
\Pntr^K \chat^{\,r} \; \text{maps to} \; \pm w_{I,\ep}
$$
where $w_{I,\ep}$ is the symmetric function, with lead term
$$
\pntr_1^{s_1+g_1} \uhat_1^{\ep_1}
\pntr_2^{s_2+g_2} \uhat_2^{\ep_2}
\cdots
\pntr_n^{s_n+g_n} \uhat_n^{\ep_n}
=
\pntr^I \uhat^\ep.
$$
Note that, by construction, this satisfies \A.

Reversing the process to go from $\ww$ to 
$\Pntr^K \chat^{\,r}$ is unpleasant.  It is trivial to go back to  
$\pntr^I \uhat^\ep$, but that is still inside $E(2)^*(\Smash^n \cp)$.
It is best to see $\ww$ in terms of Chern classes.  Modulo higher
filtration, we have, 
\begin{equation}
\label{tochern}
\ww = \chat_1^{2i_1 + \ep_1 -2i_2 - \ep_2}
 \chat_2^{2i_2 + \ep_2 -2i_3 - \ep_3}
 \cdots
 \chat_n^{2i_n + \ep_n }.
\end{equation}
Note that when $\ep_1 = \ep_2 = 0$, we get $\chat_1^{2(i_1 - i_2)} = \pm \Pntr_1^{(i_1 - i_2)}$ mod
higher filtration.  This comes in handy later.

Although we don't need to be able to completely reverse
the process to go from $\ww$ to $\Pntr^K \chat^{\,r}$,
we do need to keep track of the parity of $s(\ep)$.

\begin{lemma}
\label{parity}
If $\ww$ is the image of $\chat^J = \chat_1^{j_1} \chat_2^{j_2}
\ldots \chat_n^{j_n}$, then the parity of $s(\ep)$ is the
same as the parity of $j_1 + j_3 + j_5 + \cdots$, or, equivalently,
$r_1 + r_3 + r_5 + \cdots$ from above.
\end{lemma}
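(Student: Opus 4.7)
The plan is to read off $s(\ep) \bmod 2$ directly from the lead term of the image of $\chat^J$ in $E(2)^*(\Smash^n \cp)$, so that the parity statement reduces to a short bookkeeping exercise.

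First, recall from the discussion preceding \eqref{tochern} that $\chat_k$ maps to the $k$-th elementary symmetric function in $\uhat_1,\ldots,\uhat_n$, whose lead term under our order is $\uhat(k) = \uhat_1\uhat_2\cdots\uhat_k$. Consequently, modulo higher filtration, the lead term of the image of $\chat^J$ is
$$
\uhat(1)^{j_1}\uhat(2)^{j_2}\cdots\uhat(n)^{j_n} \;=\; \uhat_1^{a_1}\uhat_2^{a_2}\cdots\uhat_n^{a_n},
\qquad a_k \;=\; j_k+j_{k+1}+\cdots+j_n.
$$
Using $\uhat_k^{\,2} = -\pntr_k$ modulo higher filtration (Lemma \ref{lemma}), we convert each $\uhat_k^{a_k}$ to a monomial $\pntr_k^{i_k}\uhat_k^{\ep_k}$ in which $\ep_k$ is simply the parity of $a_k$. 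Thus $\ep_k \equiv a_k \pmod 2$ for every $k$.

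Next I would sum the parities:
$$
s(\ep) \;\equiv\; \sum_{k=1}^n a_k \;=\; \sum_{k=1}^n\sum_{m\ge k} j_m \;=\; \sum_{m=1}^n m\, j_m \;\equiv\; j_1+j_3+j_5+\cdots \pmod 2,
$$
which is the first asserted equality. For the second, observe that under the identification $\Pntr^K\chat^{\,r} \leftrightarrow \chat^J$, each factor $\Pntr_i$ contributes $\chat_i c(\chat_i)$, which modulo higher filtration is an even power of $\chat_i$. Therefore $j_i = 2k_i + r_i$, so $j_i \equiv r_i \pmod 2$ for each $i$, and hence $j_1+j_3+j_5+\cdots \equiv r_1+r_3+r_5+\cdots \pmod 2$.

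There is no real obstacle here beyond ensuring that the reduction $\uhat_k^{\,2} \equiv -\pntr_k$ and the passage between $\chat^J$ and $\Pntr^K\chat^{\,r}$ are both valid in the associated graded of our filtration, which they are by construction. The content of the lemma is the arithmetic identity $\sum_k a_k \equiv \sum_m m\, j_m \pmod 2$, and everything else is definition-tracking.
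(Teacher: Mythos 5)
Your proof is correct and is essentially the paper's argument run in the opposite direction: the paper reads off $j_k \equiv \ep_k + \ep_{k+1} \pmod 2$ from equation \eqref{tochern} and then sums the odd-indexed $j$'s, whereas you push $\chat^J$ forward to its lead term $\uhat_1^{a_1}\cdots\uhat_n^{a_n}$, read off $\ep_k \equiv a_k = \sum_{m\ge k} j_m$, and sum. Both reduce to the identity $\sum_k a_k = \sum_m m\,j_m \equiv j_1+j_3+\cdots \pmod 2$, so the content is the same; the paper's version is marginally shorter because \eqref{tochern} is already available.
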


\begin{proof}
The proof is easy.  From equation \ref{tochern}, we have $j_1 + j_3
+ j_5 + \cdots$ is, mod (2), just $\ep_1 - \ep_2 + \ep_3 - \ep_4
+\cdots$ and this has the same parity as $s(\ep)$.  Using the $r$,
we have, mod 2, $s(\ep) = r_1 + 2r_2 + 3r_3 + \cdots + nr_n$. 
Deleting all the even terms gives the same result.
\end{proof}

\section{Lemmas for our $MU(n)$ $d_1$ computations}

We are going to compute $d_1$ in the Bockstein spectral sequence using a spectral
sequence.  Our computations will be done on the image of $E(2)^*(MU(n))$ in
$E(2)^*(\Smash^n \cp)$.  This is generated by the symmetric functions 
$\ww$ where the lead term is $\pntr^I \uhat^\ep$ with
\A.  The spectral sequence we use to compute $d_1$ is based on the filtration
we have given using the ordering on the $(I,\ep)$.  Since $d_1(\ww)$ is also
a symmetric function, to compute the spectral sequence we need to know its
lead term in the associated graded object,  
i.e. the lowest filtration term of $d_1(\ww)$.
In principle, to do this, we have to compute $d_1$ on every one of the
distinct permutations that make up $\ww$.

We reduce that onerous task significantly in this section by a series of
simplifications.  First, we explain that we are working mod (2)
in a very strong sense now that we 
have computed $d_{1,0}$. 
In our actual computations, it turns out that we never need to consider
raising our filtration so much that the length of $(I,\ep)$, $\ell(I,\ep),$ is
raised by more than 3.  We don't prove that here, that just comes out
of the computations.  What we do here is show how to compute when you keep
the increase in length to less than or equal to 3.

Our differentials only act on the $\uhat$ part of $\ww$.  We show that
if $d_1$ acts on more than one $\uhat$ at a time, it increases the length
by more than 3.  The consequence of this is that we only have to take $d_1$
of one $\uhat_k$ at a time in each term of $\ww$.  That's still a lot to do,
but is already a significant simplification.

A lead term of $d_1(\ww)$ must come from $d_1$ acting on some distinct
permutation, $\pntr^J \uhat^r$,  of the lead term $\pntr^I \uhat^\ep$, and
we need only consider $d_1$ on one of the $\uhat$ in $\uhat^r$ at a time.
To be a distinct permutation other than the lead term, it cannot have \A.
For $d_1$ of it to be a lead term, $d_1$ of it must have a term with \A.  
If it doesn't have such a term with \A, then
we don't have to concern ourselves with it as it cannot be the lead term
of $d_1(\ww)$.

There are many possible distinct permutations.  Taking $d_1$ of
all of them, even using only one $\uhat$ at a time, results in a large
number of terms.  Using the considerations just discussed, 
we will
be able to eliminate from consideration almost all of them.
We reduce the relevant permutations and 
computations to a very few special cases.

That is the goal of this section.

\begin{remark}
It is important to note that we are working mod (2)
in a very strong sense.  Normally, in a spectral sequence, after
our computation of $d_{1,0}$, this would mean that
2 times an element in the associated graded object is really just an element
represented by some higher filtration term. 
However,
because $2x = 0$, we do not have such extension problems.  Two times any element is
definitely killed by $x$ and so is actually zero in the spectral sequence.
For reference, we state this as a lemma.
\end{remark}

\begin{lemma}[{\bf Two is zero}]
\label{mod2}
Two times an element in $E_{1,1}$ for $MU(n)$ is zero.  It is not represented in
the associated graded object by a non-zero element in a higher filtration.
\end{lemma}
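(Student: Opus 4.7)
The content of the lemma is to preclude an extension. The preceding computation of $d_{1,0}$ gives $d_{1,0}(v_2 \ww) = 2 v_2^{-2} \ww$ modulo higher filtration (with the analogous formula when $s(\ep)$ is odd), so $2y = 0$ certainly holds in the associated graded. The worry is that, lifting $y$ to $\tilde y \in E_1 = E(2)^*(MU(n))$, one might only get $2\tilde y = d_{1,0}(z) + w$ for some nonzero $w$ of strictly higher filtration, so that $2y$ resurfaces nontrivially at a higher graded level.

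The plan is to rule this out using the coefficient relation $2x = 0$ in $ER(2)^*$. Lift $\tilde y$ further to $\bar y \in ER(2)^*(MU(n))$; then $x(2\bar y) = 2x\bar y = 0$, so $2\bar y$ is $x$-torsion, i.e.\ an $x^1$-torsion element. All $x^1$-torsion in the BSS is detected by $d_1$, and in our filtration spectral sequence computing $d_1$ every $x^1$-torsion class of the shape $2 v_2^{0,2,4,6} \ww$ has already been exhausted by $d_{1,0}$ itself, since the identity $d_1(v_2) = 2 v_2^{-2}$ is literal and not merely an associated-graded relation. Consequently $w$ must itself be of this form modulo yet higher filtration, and the same reasoning applies to absorb $w$ up to the next level.

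The main obstacle is making this iterative step terminate. It does: in any fixed total degree the filtration by length of $(I,\ep)$ has only finitely many nonzero levels, so the induction bottoms out. Because $2x=0$ is an honest relation in the coefficient ring (not merely modulo some filtration), the same argument kills any putative residue at every filtration stage, giving $2y = 0$ literally in $E_{1,1}$ with no nonzero higher-filtration representative.
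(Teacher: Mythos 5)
The paper gives no standalone proof of this lemma: the justification is entirely the remark that precedes it, which says, informally, that because $2x=0$ in $ER(2)^*$, two times any element is $x$-torsion, is therefore captured by $d_1$ of the Bockstein spectral sequence, and hence is honestly zero rather than a nonzero higher-filtration residue. Your proposal pursues the same idea and tries to make it precise, which is the right instinct, but the step ``Lift $\tilde y$ further to $\bar y \in ER(2)^*(MU(n))$'' is a genuine gap. The map $ER(2)^*(MU(n)) \to E(2)^*(MU(n))$ is not surjective; its image is the kernel of the connecting map in the long exact sequence of the fibration $\Sigma^{17}ER(2) \to ER(2) \to E(2)$. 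At the stage $E_{1,1}$ you only know that $\tilde y$ is a $d_{1,0}$-cycle, i.e.\ $\tilde y \equiv c(\tilde y)$ modulo higher filtration, which is strictly weaker than $\tilde y$ being a cycle for the full $d_1$ (and weaker still than lying in the image of $ER(2)^*$). So $\bar y$ need not exist and your step 3 does not go through as written.

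The argument can be repaired without any lift to $ER(2)^*$, by working directly in $E(2)^*(MU(n))$ with the explicit formula $d_1 = v_2^{-3}(1-c)$ and $c(v_2) = -v_2$. One has, literally and for every $\tilde y \in E(2)^*(MU(n))$,
\begin{equation*}
2\tilde y \;=\; d_1(v_2^3\tilde y) \;+\; v_2^3\, d_1(\tilde y),
\end{equation*}
since $d_1(v_2^3 \tilde y) = \tilde y + c(\tilde y)$ and $v_2^3 d_1(\tilde y) = \tilde y - c(\tilde y)$. This exhibits $2\tilde y$ as a $d_1$-boundary plus $v_2^3 d_1(\tilde y)$; the latter is exactly what the later $d_{1,j}$ absorb, and vanishes outright once $\tilde y$ survives to a $d_1$-cycle. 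This replaces your appeal to a lift and your appeal to ``$d_1(v_2)=2v_2^{-2}$ is literal'' (true, but insufficient on its own because $d_1$ is not multiplicative). With that substitution your termination argument --- finitely many filtration levels in a fixed degree --- is sound, and the conclusion of the lemma follows. In short: same core idea as the paper, but your step 3 must be reworked as above to avoid the nonexistent lift.
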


We recall from lemma \ref{lemma2} (our long version of $d_1$):
$$
d_1(\uhat) =   v_2^{-3}( \vhat_1 \pntr + \vhat_1^3 \pntr^2 + \pntr^2) 
\quad \mod (2, \pntr^2 \uhat) 
$$
This is our main source of information for computing $d_1$ because these
are all the terms of $d_1$ we need.  

\begin{remark}[{\bf Powers of $v_2$}]
We have already introduced the notation
$\voe$.  If we apply our above $d_1$ to a $\uhat_k$, we decrease the number
of $\uhat$ in $\uhat^\ep$ by one, thus changing the parity of $s(\ep)$.  On the other hand,
the $v_2^{-3}$ changes the parity for $\voe$, so the formula $\voe \ww$
stays aligned as we do differentials.  In fact, we can generally ignore
the powers of $v_2$ when working with $d_1$ because they take care of themselves.
\end{remark}

\begin{convention}
	\label{voeconv}
Now that we have established that the $\voe$ that depends on $s(\ep)$ takes
care of itself, for the part of this section 
before our important lemmas,
we will ignore
the powers of $v_2$. 
They will be re-introduced when we get to our lemmas.
\end{convention}

\begin{defn}[{\bf Short version of $d_1$}] 
Following convention \ref{voeconv}, the {\em short version} of $d_1$ is:
\label{shortver}
$$
d_1(\uhat) =    \vhat_1 \pntr 
\quad \mod (2, \pntr \uhat) 
$$
\end{defn}
This is much of what we need, but it does run into problems that require the
long version of the formula.  When we apply this to just one $\uhat_k$ and one term of
the symmetric function, we get
$$
d_1(\pntr^J \uhat^r) = \vhat_1 \pntr^{J+\Delta_k} \uhat^{r-\Delta_k}
$$
If this element exists
and is of lowest filtration for our choice of $k$, 
we usually don't have to go further.  If there is no
$\vhat_1$ on such an element, it doesn't mean it is zero as is the case with 2.
Instead, it means that the element can be represented in a higher filtration.
Since all of our elements start off with a $\vhat_1$, if it isn't there, it means
that the short version of $d_1$ has already come along to hit it.
That doesn't make it zero, but since the image of $d_1$ is zero, it means we
have: 
$$
  \vhat_1 \pntr = \vhat_1^3 \pntr^2 + \pntr^2 
\quad \mod (2, \pntr^2 \uhat) 
$$
Always in such cases, the $\vhat_1^3\pntr^2$ isn't there as well and so belongs
in a higher filtration giving us the relation.  
\begin{relation}
\label{oldformula2}
$$
  \vhat_1 \pntr =  \pntr^2 
\quad \mod (2, \pntr^2 \uhat) 
$$
This gives a proof of equation \ref{oldformula} that we have already used 
in similar situations.
\end{relation}

This increase in filtration is significant as it involves
an increase in the length of $(I,\ep)$, $\ell(I,\ep)$.  Note that the short version
of $d_1$ increases length by one and the relation above by another 2.
We are fortunate that we never have to go beyond an increase of length 3.  Note that in the long
version of $d_1$ above, we only raise length by 3 if we need to use the $\pntr^2$
term as well.
When this happens, it is always because the terms with $\vhat_1$ have proven useless.
In these cases we can move on to:
\begin{defn}[{\bf Long version of $d_1$}]
Following convention \ref{voeconv} when the $\vhat_1$
	term proves useless, the {\em long version} of $d_1$ is:
\label{longver}
$$
d_1(\uhat) =     \pntr^2 
\quad \mod (2, \pntr^2 \uhat) 
$$
\end{defn}

Our $d_1$  acts only on the $\uhat_k$ because $d_1$ commutes
with the $\pntr_i$ and $v_2^2$, but we show now that if we act on more
than one $\uhat_k$ at a time, the result is in a high enough length we don't need
to worry about it.

If we apply $d_1$ to two of our $\uhat$ at the same time, we get
$$
d_1(\pntr^J \uhat^r) = \vhat_1^2 \pntr^{J+\Delta_i + \Delta_j} 
\uhat^{r - \Delta_i -\Delta_j}
$$
This raises length by 2.  In our situations, if $\vhat_1$ is around, 
it would be unnecessary to use 2 different $\uhat$.
We could just use one of $\uhat_i$ or $\uhat_j$, choosing the lower
of $i$ and $j$ to get the lowest filtration element.

We need to consider the case where there is no $\vhat_1$ in the associated
graded object
on
$$
 \pntr^{J+\Delta_i + \Delta_j} \uhat^{r - \Delta_i -\Delta_j}.
$$
To get rid of a $\vhat_1$ using the formula \ref{oldformula2}, 
we have to add two more to the length, and, again, we are
out of bounds for our work, having increased the length by 4.

\begin{remark}[{\bf Only one $\uhat_k$ at a time}]
	We will never need to apply $d_1$ to more than one $\uhat_k$
at a time in each of the distinct permutations.
	This simplifies things dramatically.
\end{remark}

We need to identify the lead term of $d_1(\ww)$ in our spectral sequence for $d_1$.
We will do this inductively by computing the map $d_{1,j}$, which is just our $d_1$
in our spectral sequence, restricted to $\ww$ with $\ep_1 = \ep_2 = \ldots = \ep_{j-1} = 0$
and $\ep_j = 1$,  that is, our $W_j$ of section \ref{sketch}.

Since $d_1(\ww)$ is a symmetric function, the 
lead term must be a term of $d_1(\pntr^J \uhat^r)$,
where $\pntr^J \uhat^r$ is a distinct permutation of the 
lead term for $\ww$, i.e. $\pntr^I 
\uhat^\ep$.  
If $\pntr^J \uhat^r$ is anything other than the lead term, 
it cannot have {\bf property A}
in order to be a distinct permutation.
However, if it is going to create a lead term for $d_1(\ww)$, 
a term of 
$d_1(\pntr^J \uhat^r)$  must have {\bf property A}.

There can be many distinct permutations on our lead term to make up a $\ww$.
The two properties listed above restrict the 
permutations we need to be concerned with.

Only a few things can happen with our $d_1$.  The first thing that always
happens
is to take a $\pntr_k^{i_k} \uhat_k$ to $\vhat_1 \pntr_k^{i_k +1}$.
Sometimes this is enough because our associated graded object is
free over $\Zq[\vhat_1]$ and our choice of $k$ gives the lowest
filtration.  
Often it is not enough because the term
with $\vhat_1$ is not there in the associated graded object and
we need to apply the relation $\vhat_1 \pntr_h^{i_h} = \pntr_h^{i_h +1 }$
for some $h$ and get
\begin{equation}
\label{twostep}
\pntr^{J+\Delta_h + \Delta_k} \uhat^{r - \Delta_k}
\end{equation}
In special cases we have to go straight to the long
form of $d_1$ and take $\pntr_k^{i_k}\uhat_k$ directly to $\pntr_k^{i_k +2}$.

Unfortunately, we cannot write down a general formula that works
in all of our cases.  Our computations depend too much on the
state of the associated graded object at the time of the computation. 
There are, however, some recurring standard computations that we
can discuss.  Before we look at these general cases, it is illuminating
to look at some small special cases.  

We begin with
$
w_{(1,0),(0,1)} = \pntr_1 \uhat_2 + \uhat_1 \pntr_2,
$
which has lead term $\pntr^{(1,0)} \uhat^{(0,1)} = \pntr_1 \uhat_2$.
If we take $d_1$ of this using the short version of $d_1$,
we get
$$
d_1(\pntr_1 \uhat_2 + \uhat_1 \pntr_2) 
= \vhat_1 (\pntr_1 \pntr_2 + \pntr_1 \pntr_2)
= 2 \vhat_1 \pntr_1 \pntr_2 = 0.
$$
In cases (and there are many) like this, we call on the 
long form of $d_1$ where
we have established that we can ignore the $\vhat_1$'s.
Now we get
$$
d_1(\pntr_1 \uhat_2 + \uhat_1 \pntr_2) 
= \pntr_1 \pntr_2^2 + \pntr_1^2 \pntr_2
= w_{(2,1),(0,0)}.
$$
Our lead term for this is $\pntr_1^2 \pntr_2$, 
and this is a case where the lead
term of $d_1(\ww)$ does not come from $d_1$ on the lead term of $\ww$, 
something that would make our lives
much easier.

Stepping up to the similar situation for $n=3$, consider
$$
w_{(1,1,0),(0,0,1)}
=
\pntr_1
\pntr_2
\uhat_3
+
\pntr_1
\uhat_2
\pntr_3
+
\uhat_1
\pntr_2
\pntr_3
$$
This time, applying the short version of $d_1$ gives us
$$
\vhat_1
\pntr_1
\pntr_2
\pntr_3
+
\vhat_1
\pntr_1
\pntr_2
\pntr_3
+
\vhat_1
\pntr_1
\pntr_2
\pntr_3
= 
3\vhat_1
\pntr_1
\pntr_2
\pntr_3
$$
We have two possibilities at this point.
If the associated graded object is free over $\Zq[\vhat_1]$,
we are done.
If $\vhat_1$ is zero on the associated graded object, we
could, in principle, get $\ww$ with lead term
$\pntr_1^2 \pntr_2 \pntr_3$.  In fact, in the $n=3$ case
this doesn't happen but it still illustrates a point
because related things like this do happen when $n > 3$.
The same is true about the next example as well.

Consider
$$
w_{(2,2,0),(0,0,1)}
=
\pntr_1^2
\pntr_2^2
\uhat_3
+
\pntr_1^2
\uhat_2
\pntr_3^2
+
\uhat_1
\pntr_2^2
\pntr_3^2
$$
Start by using the short version of $d_1$ to get
$$
\vhat_1
\pntr_1^2
\pntr_2^2
\pntr_3
+
\vhat_1
\pntr_1^2
\pntr_2
\pntr_3^2
+
\vhat_1
\pntr_1
\pntr_2^2
\pntr_3^2
= 
\vhat_1 w_{(2,2,1),(0,0,0)}
$$
If this is an element, we are done.  If $\vhat_1 =0$ here, we
have to apply relation \ref{oldformula2}.  The obvious choice
gives us
$
w_{(3,2,1),(0,0,0)},
$
but if this is not an element in our associated graded object,
we would have to apply relation \ref{oldformula2} to the $i_3 = 1$
term giving us
$3w_{(2,2,2),(0,0,0)}$.

It is worth keeping these simple examples in mind as
we try to look at some general cases.

We are now going to prove some highly technical lemmas
that will help us get through our rough computations later.
Each of our $E_{1,j}$ comes in two parts, a $\Zq[\vhat_1]$
free part and a part where $\vhat_1$ is zero on the associated
graded object.  Dealing with the $\Zq[\vhat_1]$ free part
is  fairly easy, so we start with it.
We don't have to know much right now about $E_{1,j}$, except
that the elements $\ww$ all have $\ep_k = 0$ for $k < j$ and
we are only interested in computing $d_{1,j}$ on the elements
with $\ep_j = 1$.

As we will use the following lemmas in our main computation,
we abandon the use of the convention \ref{voeconv}.

\begin{lemma}[{\bf The $\vhat_1$ free part}]
\label{free}
Given $\ww \in E_{1,j}$ with $\ep_j = 1$ 
in the $\Zq[\vhat_1]$ 
free part of $E_{1,j}$ for $MU(n)$ such that either 
$$
i_{j-s} = i_{j-s+1} = \cdots = i_{j-2} = i_{j-1} = i_j + 1
$$
with $s$ maximal and even or $i_{j-1} > i_j +1$ (the equivalent
of $s=0$).  Then
$$
d_{1,j}(\voe \ww) = v_2^{-3} \voe \vhat_1 w_{I+\Delta_j,\ep -\Delta_j}
$$
\end{lemma}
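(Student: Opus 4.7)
The plan is to compute $d_{1,j}(\voe\ww)$ by applying the short form $d_1(\uhat_k)\equiv v_2^{-3}\vhat_1\pntr_k$ modulo higher filtration to a single $\uhat_k$ in each distinct permutation $\pntr^J\uhat^r$ comprising $\ww$; the preceding discussion justifies both simplifications for any contribution that raises the length by one, which is the regime relevant here. The task is then to isolate the lowest-order monomial of the resulting sum and to recognize its symmetric-function closure as $v_2^{-3}\voe\vhat_1 w_{I+\Delta_j,\ep-\Delta_j}$.

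First I would verify that $(I+\Delta_j,\ep-\Delta_j)$ satisfies \A. The only boundary where it could fail is between positions $j-1$ and $j$, where the weight $2i_j+1$ is replaced by $2i_j+2$. Since $\ep_{j-1}=0$ in $E_{1,j}$, both hypothesized alternatives ($i_{j-1}=i_j+1$ for even $s\ge 2$, or $i_{j-1}>i_j+1$ for $s=0$) give $2i_{j-1}\ge 2i_j+2$, while the $j$-to-$j{+}1$ inequality is inherited from the original \A\ for $(I,\ep)$.

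Next I would count contributions to the monomial $\pntr^{I+\Delta_j}\uhat^{\ep-\Delta_j}$. Matching pairs $((J,r),k)$ with $r_k=1$ and $(J+\Delta_k,r-\Delta_k)=(I+\Delta_j,\ep-\Delta_j)$ forces $(J_k,r_k)=(I_j,1)$, so the permutation is either the identity with $k=j$, or the transposition of positions $j$ and $k$ for some $k$ with $(I_k,\ep_k)=(I_j+1,0)$. Under the hypothesis these positions are precisely $\{j-s,\ldots,j-1\}$, giving $1+s$ contributions, each with coefficient $\vhat_1$. Because $s$ is even, $1+s$ is odd, so by Lemma~\ref{mod2} the summed coefficient is a non-zero $\vhat_1$, and the $\Zq[\vhat_1]$-freeness of the part containing $\ww$ ensures that $\vhat_1 w_{I+\Delta_j,\ep-\Delta_j}$ survives in the associated graded.

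Finally I would rule out any strictly smaller-order monomial. Every $d_1$-image has weight multiset obtained from $\{a_1,\ldots,a_n\}$ (with $a_i=2i_i+\ep_i$) by replacing one odd $a_i$ with $a_i+1$. If that entry is $a_j$ itself, the unique \A-sorted tuple realizing the new multiset is $(I+\Delta_j,\ep-\Delta_j)$, because under $\ep\le 1$ the only pair of weight $2i_j+2$ is $(i_j+1,0)$. If instead we replace some odd $a_l$ with $l>j$, then the pair $(i_j,1)$ of weight $a_j$ still appears and the new even weight $a_l+1$ must settle somewhere at or above position $l$; comparing weights from position $n$ downward against $(I+\Delta_j,\ep-\Delta_j)$, the first disagreement occurs at a position $\ge j+1$ whose Case~B weight is strictly larger, pushing this sorted tuple higher in the order. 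The main obstacle is precisely this last step: ties among the $a_i$ force careful tracking of where the bumped weight $a_l+1$ actually lands after re-sorting, and one must argue in every such scenario that some position $>j$ receives a weight exceeding that of $(I+\Delta_j,\ep-\Delta_j)$. Once this is done, the $v_2$-bookkeeping on the right-hand side follows automatically from the $v_2^{-3}$ in the definition of $d_1$ together with the convention that $\voe$ tracks the parity of the $\uhat$-degree of the factor it multiplies, completing the proof.
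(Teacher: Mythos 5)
Your overall structure matches the paper's: verify {\bf property A} for $(I+\Delta_j,\ep-\Delta_j)$, count the exact monomial contributions (identity plus the $s$ transpositions, $s+1$ of them, odd, hence a surviving $\vhat_1$), and then rule out lower-filtration monomials. The first two steps are essentially the paper's argument. The third step, however, is where you depart from the paper. The paper argues permutation-by-permutation: it observes that any $(J,r)$ giving an {\bf A}-sorted output after $d_1$ must be a cyclic shift moving a single entry $\pntr_h^{i_h}\uhat_h^{\ep_h}$ to position $k$, and then it checks directly that a rightward shift, or a leftward shift of some $\uhat_k$ with $k>j$, raises filtration. Ties never arise because a shift within a block of equal weights is not a \emph{distinct} permutation. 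You instead argue via weight multisets, bounding every monomial below by the {\bf A}-sorted representative of its multiset and then comparing multisets. This is a reasonable alternative, but it loses the ``distinct permutation'' information, and that is exactly where the gap appears.

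The specific problem: your stated goal for the tie case, namely to show that ``some position $>j$ receives a weight exceeding that of $(I+\Delta_j,\ep-\Delta_j)$,'' is false. Take $n=4$, $j=3$, $(I,\ep)=((1,1,0,0),(0,0,1,1))$, which lies in the $\Zq[\vhat_1]$-free part of $E_{1,3}$ and satisfies the hypothesis with $s=2$. Here $a_3=a_4=1$ are both odd. Incrementing $a_4$ and re-sorting gives weights $(2,2,2,1)$ with pairs $((1,1,1,0),(0,0,0,1))$ --- literally the target $(I+\Delta_3,\ep-\Delta_3)$. There is no position with a strictly larger weight; the {\bf A}-sorted tuples coincide. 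More generally, whenever $a_j=a_{j+1}=\cdots=a_l$ for some $l>j$ (all odd, hence $\ep_{j+1}=\cdots=\ep_l=1$, $i_{j+1}=\cdots=i_l=i_j$, consistent with the $\Zq[\vhat_1]$-free hypothesis), the multisets of pairs after incrementing $a_j$ and after incrementing $a_l$ are identical, so the {\bf A}-sorted tuples are equal, not strictly comparable. As written, your plan for closing the argument therefore cannot succeed.

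The repair is small but it changes what you are trying to prove. You only need the non-strict inequality: the {\bf A}-sorted representative of any admissible multiset is $\ge$ the target in the filtration order. Combined with your step two --- which shows the \emph{exact} monomial $\pntr^{I+\Delta_j}\uhat^{\ep-\Delta_j}$ has coefficient $s+1\ne 0$, and that equality of multiset with non-{\bf A}-sorted arrangement forces strictly higher filtration --- this already identifies the lead term. (The ``tie'' contributions are not among your $s+1$ transpositions precisely because a transposition within a tie block is the identity, so they cannot add to or cancel the lead coefficient.) Alternatively, you can avoid the multiset reformulation entirely and follow the paper's slide argument, which sidesteps ties by appealing to distinctness.
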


\begin{proof}
First note that $(I+\Delta_j,\ep-\Delta_j)$ has \A\ because all
we changed was $i_j$ and it was raised by 1 to be less than or equal to
$i_{j-1}$.

Second, we want to show how to get such a term, and then we
will show that no other term with \A\ has a lower filtration.

We start with the $i_{j-1} = i_j + 1$ option.
We can consider all of the permutations where all we have done
is moved $\pntr_j^{i_j} \uhat_j$ to the left in the place
of $\pntr_{j-k}^{i_{j-k}}$ for $k$ from $1$ to $s$ (there is no 
$\uhat_{j-k}$ by \A\ and the description of $E_{1,j}$).  When we apply
our short $d_1$ to each of these terms, with our $\uhat_j$
in the $j-k$ place, we have $(s+1)$ terms all the same
as our desired result.  Since $s$ is even, we have our
required term.

If $i_{j-1} > i_j + 1$, we can just apply the short $d_1$
to $\uhat_j$ to get the required term.
Note here that if we try to shift the $\uhat_j$ term
to the left, we get a term without \A, such that
when we apply the short $d_1$ to it, it still does not have
\A.
This is really just the $s=0$ version of the lemma.

Now we have to show that we cannot achieve a lower filtration
element using any other $\uhat_k$ and/or permutation.

We pick a 
$\pntr_k^{j_k} \uhat_k$ 
in some permutation,
$\pntr^J \uhat^r$ of $\pntr^I \uhat^\ep$
to apply our short $d_1$ to.  If we remove 
$\pntr_k^{j_k} \uhat_k$ 
from $\pntr^J \uhat^r$, we must have \A.
If not, we cannot get \A\ when we apply $d_1$ to $\uhat_k$.
And so, what remains, must be a subsequence of $\pntr^I \uhat^\ep$
with just one term missing, 
$\pntr_h^{i_h} \uhat_h$.  
The
permutation is to just move 
$\pntr_h^{i_h} \uhat_h$
to
$\pntr_k^{j_k} \uhat_k$
leaving all other terms fixed.
By this we mean that $i_h = j_k$.
If $h < k$, we have moved
$\pntr_h^{i_h} \uhat_h$
to the right.  For this to be a distinct
permutation, we must have $2i_h + 1 > 2i_k + \ep_k$.
It is because of this term that this distinct permutation 
has a higher filtration than the lead term.
Since we are going to then replace $\uhat_k$ with $\vhat_1 \pntr_k$,
we are going to increase the filtration even further.
Since this situation can only happen when $j \le h < k$, 
($\ep_h =0$, $h < j$),
this is
of a higher filtration than the element we have already
discussed.

We have shown that, in this case, the only relevant 
permutation consist of sliding some 
$\pntr_k^{j_k} \uhat_k$
to the left, because we have shown that going to the
right results in higher filtration elements.

Our first computation involves $\uhat_j$ and permutations
that involve sliding it to the left, so all we have to
do now is eliminate sliding $\uhat_k$ to the left when
$j < k$.  To get a distinct permutation, we must
have $2i_{k-1} +\ep_{k-1} > 2i_k + 1 (= \ep_k)$.
We must
slide the term in the $k$-th place passed the one
in the $(k-1)$-st place and then
apply the $d_1$ to
the moved $\uhat_k$.
That gives us the same length, but the increase in the
$k$-th place by this permutation gives it a higher filtration
than the term we have already obtained.
\end{proof}

\begin{remark}[{\bf Limits on permutations}]
\label{limits}
The above lemma took care of all of the $\Zq[\vhat_1]$
issues we will come up against.  The differential $d_{1,j}$ on the part of 
$E_{1,j}$ with $\ep_j = 1$ and $\vhat_1$ equal to zero on it always raises
the length of $(I,\ep)$ by 3 either because we use the long
version of $d_1$ or the short version followed by the
relation \ref{oldformula2}.  To compare filtrations, we have
to use the criteria for the order on the $(I,\ep)$ other
than the length.  

We want to limit the types of permutations we need to consider.
We only look at the two step process where we use the short
$d_1$ and then the relation.  The proof of the case using the long $d_1$ is
similar to the previous lemma.

The first assumption we make is that we can 
find a non-zero $w_{I+\Delta_h+\Delta_j,\ep - \Delta_j}$ term 
in $d_1(\ww)$ with $h < j$ with \A.  We will have to do this with
computations in our lemmas, but we just assume it here.

Now we want to eliminate all but a few permutations from
our consideration.                     

Consider some permutation, $\pntr^J \uhat^r$, of our lead term,
$\pntr^I \uhat^\ep$.  We plan on applying the short $d_1$
to some $\uhat_k$ and then using relation \ref{oldformula2}
on some $\pntr_h^{j_h}$.  If we remove these two terms from
$\pntr^J \uhat^r$, what remains must have \A, and so is a
subsequence of $\pntr^I \uhat^\ep$.  
Consequently, we can describe our permutation of $\pntr^I \uhat^\ep$
to $\pntr^J \uhat^r$ as just moving two terms around, namely
some $\pntr_{k'}^{i_{k'}} \uhat_{k'}$ moving to 
$\pntr_{k}^{j_{k}} \uhat_{k}$ with $i_{k'} = j_k$ and some
$\pntr_{h'}^{i_{h'}} \uhat_{h'}^{\ep_{h'}}$ moving to 
$\pntr_{h}^{j_{h}} \uhat_{h}^{r_h}$ with $i_{h'} = j_h$ and 
$\ep_{h'} = r_h$.  All our permutation does is slide these
two terms around, either to the left or right in $\pntr^I \uhat^\ep$.

Because we have assumed the existence of a certain type of element
in $d_1(\ww)$, we can see immediately that any 
change to the right of the $\uhat_j$ place, either due to $d_1$
or the permutation, will result in a higher filtration term, much
as in the previous lemma.

Since we can't mess with things to the right of $\uhat_j$, 
we must have $k' = j$.
The only
permutation that $\pntr_j^{i_j} \uhat_j$ can be involved with is
a shift to the left.  Likewise, the 
$\pntr_{h'}^{i_{h'}} \uhat_{h'}^{\ep_{h'}}$ term above cannot
be to the right of the $\uhat_j$ term, but must be to the left.
That means that $\ep_{h'} = r_h = 0$.  If we try to shift
our 
$\pntr_{h'}^{i_{h'}}$
to the right, we automatically end up with something of higher
filtration again, so this term too must shift only to the left
if at all. 

There are limitations when shifting to the left as well.  
If we try to shift 
$\pntr_j^{i_j} \uhat_j$ 
to the left,
we can only go passed terms with $i_k = i_j+1$.  Otherwise,
when we change 
$\pntr_j^{i_j} \uhat_j$ 
to
$\pntr_j^{i_j+1} $ 
we would not have \A.
Similarly, if we try to shift $\pntr_h^{i_h}$ to the left, it
can only go passed terms with $i_{h'} = i_h +1$ or we will
not have \A\ when we apply relation \ref{oldformula2}.
\end{remark}

\begin{lemma}
\label{nonfree1}
Given $\ww \in E_{1,j}$ with $\ep_j = 1$ 
in the 
part of $E_{1,j}$ of $MU(n)$ that has $\vhat_1 =0$ on it
such that 
$$
i_{j-s} = i_{j-s+1} = \cdots = i_{j-2} = i_{j-1} = i_j + 1
$$
with $s$ maximal and odd and  
$$
i_{j-s-t} = i_{j-s-t+1} = \cdots = i_{j-s-2} = i_{j-s-1} = i_{j-s} + 1
$$
with $t$ maximal and even.
Then
$$
d_{1,j}(\voe \ww) = v_2^{-3} \voe  w_{I+\Delta_{j-s} +\Delta_j,\ep -\Delta_j}
$$
\end{lemma}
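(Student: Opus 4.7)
The plan mirrors the proof of lemma \ref{free}, but with $s$ odd the parity has shifted: the short-version pathway now cancels mod 2, so the lead term of $d_1(\ww)$ must come from the long version of $d_1$ in definition \ref{longver}. First, sliding $\pntr_j^{i_j}\uhat_j$ leftward through the $s+1$ positions $j-s, j-s+1, \ldots, j$ and applying the short $d_1$ exactly as in lemma \ref{free} produces $(s+1)\vhat_1 w_{I+\Delta_j,\, \ep - \Delta_j}$, which vanishes mod 2. Any attempt to absorb the surviving $\vhat_1$ via relation \ref{oldformula2} merely propagates this zero coefficient, so the short mechanism contributes nothing in any filtration.

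Turning to the long version of $d_1$, which sends $\uhat_k \mapsto \pntr_k^2$ on the unique $\uhat$-slot of each permutation of $\pntr^I \uhat^\ep$: every resulting monomial shares the multiset obtained from that of $\pntr^I \uhat^\ep$ by replacing $(i_j, 1)$ with $(i_j + 2, 0)$, namely $t+1$ copies of $i_j+2$ and $s$ copies of $i_j+1$, together with the values outside the affected range. This coincides with the multiset of $\pntr^{I+\Delta_{j-s}+\Delta_j}$ after \A-sorting, since the promoted value lands at slot $j-s$, immediately past the block of $t$ leading $i_j+2$'s. Each monomial in the target orbit has exactly $t+1$ preimages under long $d_1$, one for each slot carrying the value $i_j+2$ (the slot from which the original $(i_j,1)$ was lifted), so the long-version contribution to $d_1(\ww)$ is
\[
(t+1)\, w_{I+\Delta_{j-s}+\Delta_j,\, \ep-\Delta_j} \equiv w_{I+\Delta_{j-s}+\Delta_j,\, \ep-\Delta_j} \pmod{2}
\]
since $t$ is even. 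Including the $v_2^{-3}\voe$ factor from the $d_1$ formula yields the claimed identity.

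The subtlest checks, and what I expect to be the main obstacle, are the filtration comparisons: one must verify, via the reasoning of remark \ref{limits}, that no other permutation, choice of $\uhat$-slot, or combination of short $d_1$ with relation \ref{oldformula2} produces a symmetric function strictly below $w_{I+\Delta_{j-s}+\Delta_j,\, \ep-\Delta_j}$ in filtration. This reduces to the observation that all long-$d_1$ outputs share the single multiset described above, so no $h \ne j-s$ can yield a valid \A-sorted target of equal length, and that any further relation-absorption places the result at strictly higher filtration.
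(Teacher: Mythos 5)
Your approach is genuinely different from the paper's, and I believe it is correct — in fact arguably cleaner. The paper splits the count as $1$ (from the long $d_1$ applied to the $k=s$ sliding permutation) plus $t$ (from short $d_1$ on the $c$-shifted transposition permutations, followed by absorbing the $\vhat_1$ via relation \ref{oldformula2} at slot $j-s-c$), giving $1+t$. You instead get $t+1$ directly by observing that the target monomial $\pntr^{I+\Delta_{j-s}+\Delta_j}\uhat^{\ep-\Delta_j}$ has exactly one long-$d_1$ preimage for each slot carrying $i_j+2$; each such preimage really is a distinct permutation of the lead term (removing one $(i_j+2,0)$ and replacing it with $(i_j,1)$ recovers the original multiset), so the count is exact. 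This dispenses with the paper's somewhat delicate bookkeeping of sliding $\uhat_j$ past $s-1$ positions after the $c$-transposition and then invoking the relation, and it makes the role of the parity of $t$ transparent.

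Two remarks on completeness. First, your assertion that the short mechanism ``contributes nothing in any filtration'' is a stronger claim than the paper makes — the paper's step for $t>0$ explicitly uses short $d_1$ plus relation \ref{oldformula2} to generate $t$ of its $t+1$ terms. Your version is the one that actually tracks the coefficient in $E(2)^*(MU(n))$ mod $2$: every short-$d_1$ contribution from $\uhat_j$ has the same multiset as $w_{I+\Delta_j,\ep-\Delta_j}$, and the coefficient of that orbit is $s+1\equiv 0$ outright, so nothing survives to be absorbed. This is a real simplification, but since it silently disagrees with the paper's narrative it deserves an explicit sentence of justification rather than being stated in passing. Second, you flag but do not carry out the filtration comparison ruling out a lower-filtration target. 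The paper handles this through Remark \ref{limits} and the accompanying analysis of which slides and transpositions are admissible; that step is genuinely where the proof earns its keep and should not be left as an expectation. You should also, as the paper does at the outset, confirm that $(I+\Delta_{j-s}+\Delta_j,\ep-\Delta_j)$ has property A — that $i_{j-s-1}>i_{j-s}$ (from the maximality of $s$) and $i_{j-1}\ge i_j+1$ (since $s\ge 1$) guarantee this is a one-line check, but it must be said.
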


\begin{proof}
First note that $(I+\Delta_{j-s} +\Delta_j,\ep-\Delta_j)$ 
has \A. 
The $\Delta_j$ part is for the same reason as in the
previous lemma.  We also know that $i_{j-s-1} > i_{j-s}$
by definition, so adding the $\Delta_{j-s}$ preserves \A.

Second, we want to show how to get such a term, and then we
will show that no other term of $d_{1,j}(\ww)$ with \A\ has 
a lower filtration.

We can consider all of the permutations where all we have done
is moved $\pntr_j^{i_j} \uhat_j$ to the left in the place
of $\pntr_{j-k}^{i_{j-k}}$ for $k$ from $1$ to $s$ (there is no 
$\uhat_{j-k}$).  When we apply
our short $d_1$ to each of these terms, with our $\uhat_j$
in the $j-k$ place, we have $(s+1)$ terms all the same,
but this time, we have an even number of them and so this is
zero.
So, the $\vhat_1$ part of $d_1$ has proven useless on these
terms.  Moving on to the long form of $d_1$, we replace
the $\uhat_j$ with $\pntr_{j-k}$ in each $(j-k)$ place
of the various permutations.  
These terms are all now in different filtrations.
The lowest filtration version
gives the answer we are looking for.

The above covers the $t=0$ case, i.e. where 
$i_{j-s-1} > i_{j-s} + 1$
and deals with the first few possible permutations
of the $ t > 0$ case, i.e. where
$i_{j-s-1} = i_{j-s} + 1$.
In this case though, there are other possible permutations.
We cannot do anything with $i_b$ where $b < j-s-t$ because
we have already used the long $d_1$ and there is nothing
else to do.  However, we can shift $i_{j-s}$ to the left
from 1 to $t$ times.  Then our permutation on the $(I,\ep)$
of $\pntr^I \uhat^\ep$ looks like
$$
(I-\Delta_{j-s -c} + \Delta_{j-s},\ep)
$$
for each $c$ from 1 to $t$.  For each such $c$, we can consider
the permutations that just slides $\pntr_j^{i_j} \uhat_j$ to
the left, but we can now only do this $(s-1)$ times, giving
us a total of $s$ equal terms.  Since $s$ is odd, this gives
us 
$$
v_2^{-3} \voe \vhat_1 \pntr^{I-\Delta_{j-s -c} + \Delta_{j-s} + \Delta_j} 
\uhat^{\ep - \Delta_j}
$$
To make this have \A, we have to apply relation \ref{oldformula2}
to $\vhat_1 \pntr_{j-s-c}$.  Together with the first case
that left $\pntr_{j-s}^{i_{j-s}}$ where it was, we have $(t+1)$
of these, but since $t$ is even, our final result is the
desired
$$
v_2^{-3} \voe  \pntr^{I+ \Delta_{j-s} + \Delta_j} 
\uhat^{\ep - \Delta_j}.
$$

Now we have to show that we cannot achieve a lower filtration
element 
in this situation
using any other $\uhat_k$ and/or permutation.

Remark \ref{limits} restricted the permutations we needed to
deal with.  It forced us to start with $\uhat_j$  for $d_1$ and then
deal with $\pntr_h$ with $h < j$ with the relation \ref{oldformula2}
if need be.
This is indeed, exactly what we did, so we see that this is
the only possibility.
\end{proof}

\begin{lemma}
\label{nonfree2}
We start with $\ww \in E_{1,j}$ with $\ep_j = 1$ 
in the 
part of $E_{1,j}$ for $MU(n)$ that has $\vhat_1 =0$ on it.
We assume that
$$
i_{j-s} = i_{j-s+1} = \cdots = i_{j-2} = i_{j-1} = i_j + 1
$$
with $s$ maximal and even.  
We also assume that, for some $k < j-s$, we have
$$
i_{k-t} = i_{k-t+1} = \cdots = i_{k-2} = i_{k-1} = i_k + 1
$$
with $t$ maximal and even.  
We further assume that $k$ is the smallest number such
that $w_{I+\Delta_{k} + \Delta_j,\ep - \Delta_j}$ is
in $E_{1,j}$.
Then
$$
d_{1,j}(\voe \ww) = v_2^{-3} \voe  w_{I+\Delta_{k} +\Delta_j,\ep -\Delta_j}
$$
\end{lemma}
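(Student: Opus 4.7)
The plan is to adapt the strategy of Lemma \ref{nonfree1} with the roles of the short and long versions of $d_1$ (Definitions \ref{shortver} and \ref{longver}) interchanged, since here $s$ is even rather than odd. So first I would produce the desired term, then enumerate the competing permutations to verify the coefficient is $1$ mod $2$, then rule out lower filtration contributions.

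First I would construct the claimed term: consider the $(s+1)$ permutations of the lead monomial $\pntr^I \uhat^\ep$ obtained by sliding the pair $(\pntr_j^{i_j}, \uhat_j)$ leftward into each position of the uniform $s$-block $[j-s,j]$. Applying the short $d_1$ (Definition \ref{shortver}) to the unique $\uhat$ in each of these permutations produces $(s+1)$ identical terms of the form $v_2^{-3}\voe\,\vhat_1\,\pntr^{I+\Delta_j}\uhat^{\ep-\Delta_j}$. Since $s$ is even, the count is odd, so by Lemma \ref{mod2} the sum survives. Because we are in the $\vhat_1=0$ part of the associated graded, we then apply Relation \ref{oldformula2} at position $k$, replacing $\vhat_1 \pntr_k^{i_k}$ by $\pntr_k^{i_k+1}$. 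By hypothesis $w_{I+\Delta_k+\Delta_j,\,\ep-\Delta_j}$ lies in $E_{1,j}$, so this identification is legitimate and produces the claimed target.

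Next I would handle the additional permutations by invoking the pattern from Lemma \ref{nonfree1}. For each $c\in\{1,\dots,t\}$ one also has permutations shifting $\pntr_k^{i_k}$ to position $k-c$ within the $t$-block $[k-t,k]$; for each such $c$ a reduced number ($s$ rather than $s+1$) of slide-positions remain available for the pair $(\pntr_j^{i_j},\uhat_j)$ within the $s$-block, and short $d_1$ followed by Relation \ref{oldformula2} produces copies of the same target. Summing these $t\cdot s$ auxiliary contributions against the original $s+1$ and using that both $s$ and $t$ are even (so the auxiliary count is even and the original count is odd), the total coefficient is $1\bmod 2$. The main obstacle, and the step requiring the most care, is this combinatorial bookkeeping: one must check that permutations which mix a shift in the $t$-block with a shift in the $s$-block produce exactly the claimed target and no other competing term with \A\ at the same filtration.

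Finally I would show no term with \A\ of strictly lower filtration can occur. Remark \ref{limits} already restricts attention to the two-step procedure of short $d_1$ on $\uhat_j$ followed by Relation \ref{oldformula2} at some $\pntr_h$, together with leftward shifts of the two affected factors. Any rightward shift of the pair containing $\uhat_j$ (or of the $\pntr_{h'}$ targeted by the relation) strictly raises the filtration, exactly as in the proof of Lemma \ref{nonfree1}. The minimality hypothesis on $k$ then says precisely that $h=k$ is the smallest index for which the two-step output satisfies \A, so no smaller $h$ can contribute a term with \A. This forces the lead term of $d_{1,j}(\voe\,\ww)$ to be $v_2^{-3}\voe\,w_{I+\Delta_k+\Delta_j,\,\ep-\Delta_j}$, completing the proof.
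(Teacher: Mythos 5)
Your overall structure matches the paper's proof of Lemma \ref{nonfree2} (produce the target term, count contributions mod $2$, rule out lower filtration), but there are two concrete problems in the middle and end.

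\textbf{The combinatorial count is wrong.} In your step for the $c\in\{1,\dots,t\}$ shifts you assert that ``a reduced number ($s$ rather than $s+1$) of slide-positions remain available'' for $(\pntr_j^{i_j},\uhat_j)$. That is the geometry of Lemma \ref{nonfree1}, where the relation is applied at position $j-s$ (the left edge of the $s$-block) and the $t$-block is the adjacent block $[j-s-t,\,j-s-1]$: there, sliding $\pntr_{j-s}$ out of the $s$-block really does shrink the block. But in Lemma \ref{nonfree2} the hypothesis is $k<j-s$, so the $t$-block $[k-t,k]$ is disjoint from the $s$-block $[j-s,j]$; sliding $\pntr_k^{i_k}$ inside $[k-t,k]$ does not touch $[j-s,j]$, and all $s+1$ slide positions for $\uhat_j$ remain available for every choice of $c$. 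The correct count is therefore $(s+1)(t+1)$, which is odd because each factor is odd. Your total $(s+1)+ts$ differs from this by $t$, an even number, so the final parity happens to agree, but the derivation is based on a mis-imported picture. The paper's proof reflects the correct picture: first the $s+1$ slides of $\uhat_j$ produce an odd multiple of $\vhat_1 w_{I+\Delta_j,\,\ep-\Delta_j}$, and then the $t+1$ slides of $\pntr_k$ together with Relation \ref{oldformula2} produce an odd multiple of $w_{I+\Delta_k+\Delta_j,\,\ep-\Delta_j}$.

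\textbf{The ``glitch'' is not ruled out.} The paper spends a paragraph disposing of the possibility that, when $i_{j-s-1}=i_{j-s}+1$, one shifts $\pntr_{j-s}^{i_{j-s}}$ further left (as in Lemma \ref{nonfree1}); since $s$ is even, the short $d_1$ then vanishes and one is forced to the long version of $d_1$, and the paper checks that this produces a strictly higher filtration than $w_{I+\Delta_k+\Delta_j,\,\ep-\Delta_j}$. Your appeal to Remark \ref{limits} does not cover this, because that remark explicitly restricts its detailed analysis to the two-step (short $d_1$ followed by the relation) case and defers the long-$d_1$ case; the glitch is precisely a long-$d_1$ case. So this potential competing lead term is a genuine hole in your argument. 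A smaller point: the minimality hypothesis on $k$ is phrased in terms of $w_{I+\Delta_k+\Delta_j,\,\ep-\Delta_j}$ being nonzero in $E_{1,j}$, not merely in terms of \A; for $h<k$ the monomial may well have \A\ but already be zero in the associated graded object, and it is the former condition that actually blocks those contributions.
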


\begin{remark}
\label{ts}
This seems highly technical, but it covers a lot of
territory for us.
It even covers more than is obvious.  If $s = 0$, that
is the same a $i_{j-1} > i_j + 1$ and if $t=0$, that
is the same as $i_{k-1} > i_k + 1$.
\end{remark}

\begin{proof}
It is easy to see that our term has \A.  We just need to
see that we can obtain it, but by now, that is straight
forward.  With $s$ even, we know the permutations from
lemma \ref{free} that give us the short $d_1$ on our
lead term along with these permutations.  
Note that as in remark \ref{ts}, this is even easier if $s=0$
as there are no relevant permutations.
We get
$$
v_2^{-3} \voe \vhat_1 w_{I+\Delta_j,\ep -\Delta_j}
$$
Now, using similar permutations and $t$ even, we can
apply relation \ref{oldformula2} to the $(t+1)$ permutations
to get the same term, namely the desired
$$
v_2^{-3} \voe  w_{I+\Delta_{k} +\Delta_j,\ep -\Delta_j}.
$$

We have to rule out one possible glitch.  If $i_{j-s}+1 = i_{j-s-1}$,
we could try to shift the term in the $(j-s)$ place to the
$(j-s-1)$ place or lower, we could have something like what happened
in the previous lemma, but we don't.  If we do this, the
possible shifts on the term in the $j$-th coordinate are to
move it to the left from 1 to $(s-1)$ times.  This would give
$s$ identical terms when we applied the short $d_1$, but $s$
is even, so we would have to go to the long $d_1$.  Using the
same argument as the previous lemma, that would raise 
$i_{j-s+1}$ by one, and this would make it automatically have
a higher filtration than the term we have already found.
\end{proof}

\section{Computing $d_{1,j}$, low $j$, for $MU(n)$}

We recall the definition of \A.
$$
2i_1 + \ep_1 \ge \cdots \ge
2i_k + \ep_k \ge 2i_{k+1} + \ep_{k+1} \ge \ldots \ge 2i_n + \ep_n > 0
$$

We start the computation of $d_1$ on $E_{1,1}$  
only using the $\ww$ with $\ep_1 = 1$.
We call this map $d_{1,1}$ and
the result of this computation, $E_{1,2}$. 
This is all very similar to the work in section \ref{d11} but we have to 
contend with the symmetric function now in our computation. 

\begin{prop}
	\label{d11a}
	With  $\ep_1 = 0$ and {\bf property A},
	$E_{1,2}  $ for $MU(n)$ is:
$$
\Zq[\vhat_1 ]\{\voe v_2^{0,2,4,6} w_{I,\ep} \}
\quad i_1 = i_2
$$
and
$$
\Zq\{\voe v_2^{0,2,4,6} \ww \}
\quad i_1 > i_2
$$
The $x^1$-torsion generators detected by $d_{1,1}$ are represented by:
$$
\Zp[\vhat_1]\{\vhat_1 \voe  v_2^{0,2,4,6} \ww \}
\quad i_1 > i_2
$$
\end{prop}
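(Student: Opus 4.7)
The plan is to invoke lemma \ref{free} with $j = 1$ applied to every $\ww \in E_{1,1}$ with $\ep_1 = 1$. Since there is no index to the left of position $1$, the hypothesis that ``$s$ is maximal, even, with $i_{j-s} = \cdots = i_{j-1} = i_j + 1$'' holds trivially with $s = 0$ (the empty run), and $E_{1,1}$ is entirely $\Zq[\vhat_1]$-free, so the free branch of the lemma applies without modification. This reads off
\[
d_{1,1}(\voe\, \ww) \;=\; v_2^{-3}\,\voe\,\vhat_1\, w_{I+\Delta_1,\,\ep-\Delta_1}
\]
for every $(I,\ep)$ satisfying \A\ with $\ep_1 = 1$.

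Next I would verify the target is a legal basis element and identify the image. The shift moves $(i_1,\ep_1) \mapsto (i_1+1,\,0)$ and leaves the remaining coordinates alone. \A\ at position $1$ in the source reads $2i_1 + 1 \ge 2i_2 + \ep_2$, which forces $i_1 \ge i_2$, so after the shift $i_1' = i_1+1 > i_2 = i_2'$; in particular \A\ is preserved strictly and the image lands in the region $\{\ep_1' = 0,\ i_1' > i_2'\}$. The assignment $(I,\ep) \mapsto (I+\Delta_1,\ep-\Delta_1)$ is evidently an injection, and its inverse $(I',\ep') \mapsto (I'-\Delta_1,\ep'+\Delta_1)$ lands back in the $\ep_1 = 1$ domain while preserving \A\ precisely because $i_1' > i_2'$, so this gives a bijection onto the $i_1' > i_2'$ region. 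Combined with the fact that $E_{1,1}$ is $\Zq[\vhat_1]$-free on the $\voe v_2^{0,2,4,6}\ww$, the displayed formula shows that $d_{1,1}$ is injective on the $\ep_1 = 1$ submodule and that its image is exactly the $\vhat_1$-multiples of the basis elements with $\ep_1' = 0$, $i_1' > i_2'$.

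From here the proposition falls out by bookkeeping: elements with $\ep_1 = 1$ are all killed (they are picked up as images of $d_{1,1}$ in the inner spectral sequence for $d_1$); at indices with $\ep_1 = 0,\ i_1 = i_2$ no image lands, so the summand $\Zq[\vhat_1]\{\voe v_2^{0,2,4,6}\ww\}$ survives intact; at indices with $\ep_1 = 0,\ i_1 > i_2$ the entire $\vhat_1$-torsion is hit, collapsing the summand to $\Zq\{\voe v_2^{0,2,4,6}\ww\}$. By definition, the $x^1$-torsion generators detected at this stage are the image of $d_{1,1}$, yielding $\Zp[\vhat_1]\{\vhat_1 \voe v_2^{0,2,4,6}\ww\}$ with $\ep_1 = 0,\ i_1 > i_2$. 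The only genuine obstacle is the index bookkeeping --- in particular the bijection between the $\ep_1 = 1$ source and the $(\ep_1 = 0, i_1 > i_2)$ target, which hinges on the strict inequality $i_1' > i_2'$ extracted from \A; once lemma \ref{free} is taken as given, no further calculation with the formal group law is needed here.
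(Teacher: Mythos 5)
Your proof is correct and takes the same route as the paper: apply Lemma \ref{free} with $j=1$, $s=0$ (vacuous run to the left), then do the index bookkeeping. You spell out the bijection $(I,\ep)\mapsto(I+\Delta_1,\ep-\Delta_1)$ and verify property A in both directions, which the paper leaves implicit; otherwise the argument, including the observation that the $\ep_1=0,\, i_1=i_2$ locus cannot be hit, matches the paper's proof.
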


\begin{proof}
Recall that we are now working mod (2) and that $d_1$ 
commutes with $\pntr_i$ and $v_2^2$, so we
can concentrate on $\voe \ww$ from $E_{1,1}$ with $\ep_1 = 1$.

All we have to do is apply lemma \ref{free} with $s=0$,
giving us:
$$
d_{1,1}(\voe \ww) = v_2^{-3} \voe \vhat_1 w_{I+\Delta_1,\ep - \Delta_1}.
$$

Note that the first part of $E_{1,2}$ is there because $i_1 = i_2$ with $\ep_1 = 0$ (and
therefore $\ep_2 = 0$), cannot be the target of our differential.
The result follows.
\end{proof}

\begin{remark}
If $n=1$,
the above
is consistent with the results for $ER(2)^*(\cp)$ from
\cite[Theorems 3.1 and 4.1]{Kitch-Lor-Wil-CPn}, i.e. the $n=1$ case,
even if, at first glance, they don't look the same.
Here, the only $\ww$ we have left for $E_2$ are the $\pntr_1^i$, which is the
same as $\Pntr_1^i$.
\end{remark}



Our proofs can generously be called tedious.  More detail would
not make them more user friendly.  The die-hard reader who really
cares about the details will have to put in serious effort.
To begin the induction, it isn't necessary to compute all
of the $E_{1,3-5}$, but, speaking from experience, they are
invaluable guides to the general inductive case and so
we have left them in.

\begin{prop}
	\label{d12a}
	 With  $\ep_1 = \ep_2 = 0$ and {\bf property A},
	 $E_{1,3} $ for $MU(n)$ is:
$$
\Zq[\vhat_1 ]\{\voe v_2^{0,2,4,6} w_{I,\ep} \}
\quad i_1 = i_2
$$
and
$$
\Zq\{\voe v_2^{0,2,4,6} \ww \}
\quad i_1 > i_2 = i_3
$$
The $x^1$-torsion generators detected by $d_{1,2}$ are represented by:
$$
\Zp\{\voe  v_2^{0,2,4,6} \ww \}
\quad i_1 > i_2 > i_3.
$$
\end{prop}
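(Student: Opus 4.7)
The plan is to follow the same inductive strategy laid out in the sketch of section \ref{sketch} and executed in Proposition \ref{d11a}: compute $d_{1,2}$ only on those $\ww \in E_{1,2}$ with $\ep_2 = 1$ (the earlier case $\ep_1 = 1$ is already gone), identify the image, and read off $E_{1,3}$ as what is left. The key technical inputs will be lemmas \ref{free}, \ref{nonfree1} and \ref{nonfree2}, specialized to $j = 2$.

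First I would observe that on an element $\ww$ with $\ep_1 = 0$ and $\ep_2 = 1$, property \textbf{A} forces $2i_1 \ge 2i_2 + 1$, hence $i_1 > i_2$. Therefore every source of $d_{1,2}$ lives in the $\vhat_1 = 0$ summand $\Zq\{\voe v_2^{0,2,4,6} \ww\}$ of $E_{1,2}$; the $\vhat_1$-free summand $\Zq[\vhat_1]\{\voe v_2^{0,2,4,6}\ww\}$ with $i_1 = i_2$ is untouched by $d_{1,2}$ for degree/parity reasons (it has $\ep_2 = 0$ and cannot be a target either, as I verify below), so it passes unchanged to $E_{1,3}$.

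Next I split the source into two subcases in terms of the integer $s$ of lemmas \ref{nonfree1} and \ref{nonfree2}. If $i_1 > i_2 + 1$, then $s = 0$ is even and lemma \ref{nonfree2} applies with $k = 1$ and $t = 0$, giving
\[
d_{1,2}(\voe \ww) = v_2^{-3} \voe\, w_{I + \Delta_1 + \Delta_2,\ \ep - \Delta_2}.
\]
If instead $i_1 = i_2 + 1$, then $s = 1$ is odd and $j - s = 1$ has no predecessor so $t = 0$; lemma \ref{nonfree1} applies and yields exactly the same formula. So in both subcases the differential is given by the single clean rule $\ww \mapsto w_{I + \Delta_1 + \Delta_2,\ \ep - \Delta_2}$ (up to the $v_2^{-3}\voe$ prefactor, which, as in Convention \ref{voeconv} and the remark following it, tracks the parity change of $s(\ep)$ automatically).

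Finally I would check that the assignment $(I,\ep)\mapsto (I + \Delta_1 + \Delta_2,\ \ep - \Delta_2)$ is a bijection from lead terms with $\ep_1 = 0$, $\ep_2 = 1$, $i_1 > i_2$ and \textbf{A}, onto lead terms with $\ep_1 = \ep_2 = 0$ and $i_1 > i_2 > i_3$ and \textbf{A}. This is the main bookkeeping step and the only place where the statement could go wrong: given a target $(I', \ep')$ with $\ep'_1 = \ep'_2 = 0$ and $i'_1 > i'_2 > i'_3$, one recovers a source by setting $i_1 = i'_1 - 1$, $i_2 = i'_2 - 1$, $\ep_2 = 1$ with all other entries unchanged, and then one uses $i'_1 > i'_2$ and $i'_2 > i'_3$ to check \textbf{A} on the source; conversely every source lands in the claimed set because \textbf{A} on the source forces $i_3 \le i_2 < i_2 + 1$. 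This bijection identifies the $x^1$-torsion generators killed by $d_{1,2}$ with $\Zp\{\voe v_2^{0,2,4,6}\ww\}$ for $i_1 > i_2 > i_3$, and it leaves in $E_{1,3}$ exactly the elements with $i_1 > i_2$ that are \emph{not} targets, namely $i_2 = i_3$, giving the second summand of $E_{1,3}$ in the proposition.
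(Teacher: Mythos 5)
Your proof is correct and follows essentially the same route as the paper: identify that the sources (with $\ep_2 = 1$, hence $i_1 > i_2$ by property A) live in the $\vhat_1 = 0$ summand of $E_{1,2}$, apply Lemma \ref{nonfree2} with $s=t=0$, $k=1$ when $i_1 > i_2 + 1$ and Lemma \ref{nonfree1} with $s=1$ when $i_1 = i_2 + 1$ to get $d_{1,2}(\voe\ww) = v_2^{-3}\voe\, w_{I+\Delta_1+\Delta_2,\ep-\Delta_2}$ in both cases, and read off that only the $i_2 = i_3$ terms survive. The one place you go beyond the paper is in spelling out the bijection between sources and targets, which the paper merely asserts with ``This eliminates\dots missing only the $i_2 = i_3$ terms''; your explicit verification (that $(I,\ep)\mapsto(I+\Delta_1+\Delta_2,\ep-\Delta_2)$ carries the source set bijectively onto lead terms with $i_1 > i_2 > i_3$ and \textbf{property A}) is a worthwhile addition, since that bookkeeping is exactly where such an argument could silently fail.
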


\begin{proof}
Because $\ep_2 = 0$ already on the first part of $E_{1,2}$, we have no $d_{1,2}$
on this part.

For the second part, with $i_1 > i_2$, our proof comes in two stages.  
First we assume that $i_1 > i_2 + 1$.
In this case we just apply lemma \ref{nonfree2} 
with $s=t=0$ and $k=1$
to get
$$
v_2^{-3} \voe w_{I+\Delta_1 + \Delta_2,\ep - \Delta_2}.
$$
If $i_1 = i_2 +1$, we use lemma \ref{nonfree1} 
with $s = 1$
to get
the same result.
This eliminates the $i_1 > i_2$ terms with $\ep_2 = 1$ as
sources and the $i_1 > i_2$ terms with $\ep_2 = 0$
as targets, missing only the $i_2 = i_3$ terms.  This concludes
the proof.
\end{proof}

\begin{remark}
If $n=2$, we would be done computing an associated graded version of $E_2$ for
the Bockstein spectral sequence.  There appear to be two parts to the answer,
but there are no $\ww$ with $i_2 = i_3$ because there is no $i_3$.  Consequently,
the answer is entirely in the first part, namely
$$
\Zq[\vhat_1]\{v_2^{0,2,4,6} \ww\} \quad i_1 = i_2 \quad \ep_1 = \ep_2 = 0.
$$
These $\ww$ are no more than just $\pntr_1^{i} \pntr_2^i \in E(2)^*(\Smash^2 \cp)$,
which is the image of $\Pntr_2^i \in E(2)^*(MU(2))$.
\end{remark}

\begin{prop}
	\label{d13a}
	 With  $\ep_1 = \ep_2 = \ep_3 = 0$ and {\bf property A},
	 $E_{1,4} $ for $MU(n)$ is:
$$
\Zq[\vhat_1 ]\{\voe v_2^{0,2,4,6} w_{I,\ep} \}
\quad i_1 = i_2
\quad i_3 = i_4
$$
and
$$
\Zq\{\voe v_2^{0,2,4,6} \ww \}
\quad i_1 > i_2 = i_3
$$
$$
\Zq\{\voe v_2^{0,2,4,6} \ww \}
\quad i_1 = i_2  
\quad i_3 > i_4  
$$
The $x^1$-torsion generators detected by $d_{1,3}$ are represented by:
$$
\Zp[\vhat_1]\{\vhat_1 \voe  v_2^{0,2,4,6} \ww \}
\quad i_1 = i_2 
\quad i_3 > i_4  
$$
\end{prop}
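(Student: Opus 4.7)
The plan is to start from $E_{1,3}$ as computed in proposition \ref{d12a} and determine $d_{1,3}$ on its sources, namely the $\ww \in E_{1,3}$ with $\ep_3 = 1$. Because the header of $E_{1,3}$ already forces $\ep_1 = \ep_2 = 0$, \A\ imposes $2i_2 \ge 2i_3 + 1$, hence $i_2 \ge i_3 + 1$ on every source. This rules out the second summand of $E_{1,3}$ (where $i_2 = i_3$), so every source lives in the first summand with $i_1 = i_2$, which is precisely the $\Zq[\vhat_1]$-free part. Symmetrically, the second summand cannot be a target either, because the target produced by lemma \ref{free} carries a $\vhat_1$ factor that would not lie in the $\Zq$-only piece; so the second summand passes through to $E_{1,4}$ untouched.

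For a source $\voe \ww$ with $i_1 = i_2$ and $\ep_3 = 1$, I would split on how $i_2$ compares to $i_3$ and invoke lemma \ref{free} at $j = 3$. If $i_2 > i_3 + 1$, then $s = 0$ is maximal and even. If instead $i_2 = i_3 + 1$, the hypothesis $i_1 = i_2$ forces $i_1 = i_2 = i_3 + 1$ so that $s = 2$ is maximal (there is no $i_0$) and again even. Either case yields
\[
d_{1,3}(\voe \ww) = v_2^{-3}\, \voe\, \vhat_1\, w_{I + \Delta_3,\, \ep - \Delta_3}.
\]
A quick check shows $(I + \Delta_3, \ep - \Delta_3)$ has \A, and the inequality $i_3 + 1 > i_4$ in the target follows from the source's \A\ inequality $2i_3 + 1 \ge 2i_4 + \ep_4$ (either parity of $\ep_4$).

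The shift $(I, \ep) \mapsto (I + \Delta_3, \ep - \Delta_3)$ is reversible on the relevant index set, so $d_{1,3}$ is injective on its sources and its image is exactly $\Zp[\vhat_1]\{\vhat_1 \voe v_2^{0,2,4,6} \ww\}$ with $i_1 = i_2$ and $i_3 > i_4$, matching the claimed $x^1$-torsion generators. Removing the sources (with $\ep_3 = 1$) and the $\vhat_1$-multiples of the targets (with $\ep_3 = 0$, $i_1 = i_2$, $i_3 > i_4$) from the first summand of $E_{1,3}$ splits what remains into a $\Zq[\vhat_1]$-free piece with $i_1 = i_2$ and $i_3 = i_4$, plus a $\Zq$-only piece (the orphaned non-$\vhat_1$ factors) with $i_1 = i_2$ and $i_3 > i_4$. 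Adjoining the untouched second summand of $E_{1,3}$ recovers the three pieces of $E_{1,4}$.

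The main obstacle is administering the case split between $i_2 > i_3 + 1$ and $i_2 = i_3 + 1$ cleanly enough that lemma \ref{free} covers both under its even-$s$ hypothesis; the substantive issue that some alternative permutation or other $\uhat_k$ might contribute a lower-filtration term is already settled inside that lemma, so no fresh spectral-sequence bookkeeping is needed.
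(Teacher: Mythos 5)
Your proof is correct and follows essentially the same route as the paper: identify that the second summand of $E_{1,3}$ carries no sources (since $i_2 = i_3$ forces $\ep_3 = 0$) and no targets (wrong $i_1$ vs.\ $i_2$ relation and wrong module type), then apply lemma~\ref{free} at $j=3$ to the $\Zq[\vhat_1]$-free part and read off what survives. The one thing you do that the paper elides is actually verifying the even-$s$ hypothesis of lemma~\ref{free} by splitting into $i_2 > i_3+1$ (giving $s=0$) versus $i_2 = i_3+1$ (where $i_1 = i_2$ forces $s=2$, not $s=1$); that check is genuinely needed for the lemma to apply and is a worthwhile addition.
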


\begin{proof}
This one is fairly easy.  For the second part of $E_{1,3}$ we have $i_2 = i_3$,
but we also have $\ep_2 = 0$, so we must also have $\ep_3 = 0$.  Therefore there is
no $d_{1,3}$ on this second part.

As for the first part, because we want to consider $\ep_3 = 1$ with $\ep_1 = \ep_2 = 0$,
we must have $2i_2 \ge 2i_3 +1 (= \ep_3)$, so 
$i_2 > i_3$.  
Applying $d_{1,3}$ using lemma \ref{free}, we get
$$
v_2^{-3} \voe
\vhat_1 w_{I+\Delta_3,\ep - \Delta_3}
$$
This leaves our conditions $i_1 = i_2$ and $i_3 = i_4$ on the first part (because
they are missed), and
the quotient of $d_{1,3}$ on the first part gives us the $i_1 = i_2$, $i_3 > i_4$
of the second part.
\end{proof}

\begin{remark}
If $n=3$, we are done.  Because in the first part, $i_3 = i_4$ 
and there is no $i_4$, there is no 
contribution to the answer from this first part.

For the second part, we can always write our answer in terms of:
$$
\chat_1^{2(i_1 - i_2)} \chat_2^{2(i_2-i_3)} \chat_3^{2i_3}
\quad i_3 > 0.
$$
We have conditions on $i_j$.  
In the first case with $i_1 > i_2 = i_3$, we get
$$
\Pntr_1^i \Pntr_3^j \quad i, j > 0
$$
In the second case we have $i_1 = i_2 \ge i_3$.
This gives us
$$
\Pntr_2^{i} \Pntr_3^{j}  
\quad  i \ge  0
\quad  j >  0
$$
\end{remark}

This last example can be used to ground our induction.

\begin{prop}
	\label{d14a}
	 With  $\ep_1 = \ep_2 = \ep_3 = \ep_4 = 0$ and {\bf property A},
	 $E_{1,5} $ for $MU(n)$ is:
$$
\Zq[\vhat_1 ]\{\voe v_2^{0,2,4,6} w_{I,\ep} \}
\quad i_1 = i_2
\quad i_3 = i_4
$$
and
$$
\Zq\{\voe v_2^{0,2,4,6} \ww \}
\quad i_1 > i_2 = i_3
\quad i_4 = i_5
$$
$$
\Zq\{\voe v_2^{0,2,4,6} \ww \}
\quad i_1 = i_2 
\quad i_3 > i_4 = i_5 
$$
The $x^1$-torsion generators detected by $d_{1,4}$ are represented by:
$$
\Zq\{\voe v_2^{0,2,4,6} \ww \}
\quad i_1 > i_2 = i_3
\quad i_4 > i_5
$$
$$
\Zq\{\voe v_2^{0,2,4,6} \ww \}
\quad i_1 = i_2 
\quad i_3 > i_4 > i_5 
$$
\end{prop}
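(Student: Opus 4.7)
The approach mirrors the proofs of Propositions \ref{d12a} and \ref{d13a}: we examine the three summands of $E_{1,4}$ described in Proposition \ref{d13a} separately, and compute $d_{1,4}$ on each $\ww$ with $\ep_1 = \ep_2 = \ep_3 = 0$ and $\ep_4 = 1$ by invoking Lemma \ref{free}, \ref{nonfree1}, or \ref{nonfree2} as appropriate. The first summand (the $\Zq[\vhat_1]$-free part with $i_1 = i_2$ and $i_3 = i_4$) admits no sources: with $\ep_3 = 0$ and $i_3 = i_4$, \A\ forces $\ep_4 = 0$. So the first summand passes unchanged into $E_{1,5}$.

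For the second summand ($\vhat_1 = 0$ with $i_1 > i_2 = i_3$), a source with $\ep_4 = 1$ requires $i_3 > i_4$. Either $i_3 > i_4 + 1$, giving Lemma \ref{nonfree2} with $s = 0$, or $i_3 = i_4 + 1$, which forces $i_2 = i_3 = i_4 + 1$ and hence $s = 2$. In both cases the constraint $k < j - s$ combined with \A\ forces $k = 1$, and the lemma yields
$$
d_{1,4}(\voe \ww) = v_2^{-3} \voe\, w_{(i_1+1,\, i_2,\, i_3,\, i_4+1,\, i_5, \ldots),\, (0,0,0,0,\ep_5, \ldots)}.
$$
This image lies in the second summand and automatically satisfies $i_4' > i_5'$ (since \A\ of the source gives $i_4 \ge i_5$), yielding the first family of $x^1$-torsion generators in the statement.

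For the third summand ($\vhat_1 = 0$ with $i_1 = i_2$, $i_3 > i_4$), the analysis splits on whether $i_2 > i_3$ or $i_2 = i_3$. If $i_2 > i_3$, then $s \in \{0,1\}$ according as $i_3 > i_4 + 1$ or $i_3 = i_4 + 1$, and the smallest $k$ for which $(I + \Delta_k + \Delta_4, \ep - \Delta_4)$ lies in $E_{1,4}$ is $k = 3$ (the choices $k = 1, 2$ either break \A\ or escape $E_{1,4}$). The image $v_2^{-3} \voe\, w_{(i_1, i_2, i_3+1, i_4+1, \ldots)}$ then lands in the third summand with $i_3' > i_4' > i_5'$, producing the second listed family. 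If instead $i_2 = i_3$ (so $i_1 = i_2 = i_3$), then $s$ grows and the constraint $k < j - s$ forces $k = 1$; the image $v_2^{-3} \voe\, w_{(i_1+1, i_2, i_3, i_4+1, \ldots)}$ now lands in the second summand and contributes additional generators to the first family.

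Removing sources ($\ep_4 = 1$) and images (those with $i_4 > i_5$) from $E_{1,4}$ leaves exactly the claimed $E_{1,5}$: the first summand unchanged, and in the second and third summands only the $\ww$ with $\ep_4 = 0$ and $i_4 = i_5$ survive (since \A\ with $\ep_4 = 0$ forces $i_4 \ge i_5$, and $i_4 > i_5$ elements are precisely the images of $d_{1,4}$). The main obstacle is the bookkeeping of sub-cases for the third summand: one must verify for each equality pattern among $(i_1, i_2, i_3, i_4)$ that the image produced by Lemma \ref{nonfree1} or \ref{nonfree2} lies in one of the three summands of $E_{1,4}$ and matches one of the two $x^1$-torsion families, a check that relies delicately on the maximality conditions for $s$ and $t$ in those lemmas.
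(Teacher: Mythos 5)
Your proposal is correct and follows the same case-by-case strategy as the paper: the first ($\Zq[\vhat_1]$-free) summand of $E_{1,4}$ admits no sources and passes through, while the second and third summands are handled by Lemmas \ref{nonfree1} and \ref{nonfree2} with the third summand split on $i_2 > i_3$ versus $i_2 = i_3$, exactly as the paper does. One small imprecision to flag: in the sub-case $i_1 = i_2 = i_3$ with $i_3 > i_4 + 1$, one has $s = 0$ (so ``$s$ grows'' is not quite right, and the bound $k < j - s$ does not by itself force $k=1$); what forces $k=1$ is the smallest-$k$ hypothesis of Lemma \ref{nonfree2} together with \A\ (the choices $k=2,3$ break \A\ or fall outside $E_{1,4}$). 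Similarly, whenever $s$ turns out odd (e.g.\ $s=1$ or $s=3$) it is Lemma \ref{nonfree1}, not \ref{nonfree2}, that applies, with the index $j-s$ determined rather than chosen; the image is the same, but the mechanism differs, and the write-up should distinguish them.
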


\begin{proof}
The easy part is the first part, we must have $\ep_4 = 0$, so
there is no differential.
On the rest, there are many cases to consider.  Note that after we apply
$d_1$ to $\uhat_4$, we can never hit $i_4 = i_5$ (because of \A), so we will have that
condition in the end.

We first look at the $i_1 > i_2 = i_3$ part of $E_{1,4}$.
By \A, we also have $i_3 > i_4$.
If $i_4 + 1 = i_3$ we use lemma \ref{nonfree2} with
$s=2$, $t=0$, and $k=1$, to get
$$
v_2^{-3} \voe
w_{I+\Delta_1 + \Delta_4,\ep-\Delta_4}.
$$
If $i_4 + 1 < i_3$, we use lemma \ref{nonfree2} with $s=t=0$ and
$k=1$ to get the same result.  It wasn't really necessary to
break this into two pieces since lemma \ref{nonfree2} handled
both.

This gives us everything 
in the first part of our non-$\vhat_1$ part of $E_{1,5}$
except when $i_1 = i_2+1$.  
We already had $i_1 > i_2 $ and we added $1$ to $i_1$.
We can fix this by looking at the second part when we have $i_1 = i_2 = i_3$.
We know $i_4 < i_3$.  If $i_4 + 1 = i_3$, we use lemma \ref{nonfree1}
with $s=3$ and $t=0$.  If $i_4 + 1 < i_3$, we use lemma \ref{nonfree2} with
$s=t=0$ and $k=1$.
This now gives us our $i_1 = i_2 + 1$ case.

It is time to take stock of where we are. 
We have acquired
all of the first part of our answer and used 
up the $i_1 = i_2 = i_3 > i_4$
part of the second part of $E_{1,4}$ as sources. 

We still need to hit, as targets, 
all of the $\ww$ with $i_1 = i_2 \ge i_3 > i_4 > i_5$
when $\ep_4 = 0$.  
The $i_4 > i_5$ always takes care of itself.

For sources, we need to use the $i_1 = i_2 > i_3 > i_4$ with 
$\ep_4 = 1$.
It will complete the proof if we can show that for these source
$(I,\ep)$, we have:
$$
d_{1,4}(\voe \ww) = v_2^{-3} \voe w_{I+\Delta_3 + \Delta_4,\ep - \Delta_4}.
$$
We cannot replace the $\Delta_3$ with $\Delta_1$ because our
element would not be in $E_{1,4}$.  If we try to replace it with
$\Delta_2$, the term does not have \A.
If $i_3 > i_4 + 1 $, we just apply lemma \ref{nonfree2}
with $k=3$, $s=0$ and $t=0$ unless $i_2 = i_3 + 1$, in which case
we use $t=2$. 
If
$i_3 = i_4 + 1$, we use lemma \ref{nonfree1} with $s=1$
and $t=0$ unless $i_2 = i_3+1$, in which case we use $t=2$.
\end{proof}

\begin{remark}
If $n=4$, we are done.  Because in the second part, $i_4 = i_5$ 
and there is no $i_5$, there is no 
contribution to the answer from this second part.

For the first part, 
our lead term for $\ww$ is just
$\pntr_1^i \pntr_2^i \pntr_3^j \pntr_4^j$ with $i \ge j > 0$.
This is the image of $\Pntr_2^{(i-j)} \Pntr_4^j$.
\end{remark}

\section{Computing $E_{1,j+1}$ for $MU(n)$}
\label{dobu}

We recall the definition of \A.
$$
2i_1 + \ep_1 \ge \cdots \ge
2i_k + \ep_k \ge 2i_{k+1} + \ep_{k+1} \ge \ldots \ge 2i_n + \ep_n > 0
$$

We are using an auxiliary spectral sequence that comes from the
filtration defined by the ordering on the $(I,\ep)$ 
to compute the $d_1$ for the
Bockstein spectral sequence.
Following our description of the process in section \ref{sketch},
we compute our spectral sequence for $d_1$ by induction on $j$ using the
$\ww$
with $\ep_k = 0$ for $k < j$ and $\ep_j = 1$, i.e., the $W_j$ of
section \ref{sketch}.  
We call this
map $d_{1,j}$ and it is defined on $E_{1,j}$ and the result gives us
$E_{1,j+1}$.
As in section \ref{sketch}, the map $d_{1,j}$ is injective on $W_j$ so we are left
with $\ep_j = 0$ in $E_{1,j+1}$.
When we have done $d_{1,n}$ and computed
$E_{1,n+1}$ (as a quotient of $W_{n+1}$), 
we will be done, giving an associated graded version of the $E_2$ of
the Bockstein spectral sequence.
Since at this stage all $\ep_k = 0$, $s(\ep)=0$ and is even, making $\voe = 1$.

\begin{thm}
\label{bigthm}
	For the spectral sequence
	for the calculation of $E_2$ for
	the Bockstein spectral sequence from $E(2)^*(MU(n))$ to $ER(2)^*(MU(n))$, 
	we always have {\bf property  A}.
For	$E_{1,j+1}$, $1 \le j \le n$, we have
$
\ep_1 = \ep_2 = \cdots = \ep_{j} = 0.
$
There are two parts to $E_{1,j+1}$.  First:
$$
\Zq[\vhat_1]\{\voe v_2^{0,2,4,6} w_{I,\ep}\}
\quad \text{with} \quad
i_{2b-1} = i_{2b}  \quad 0 < 2b \le j+1
$$
Second,
for $b$ with $0 < 2b+2 \le j+1$, let :
$$
i_{2c-1} = i_{2c} \quad 0 < 2c \le 2b,
\quad
i_{2b+1} > i_{2b+2},
\quad
i_{2a} = i_{2a+1} \quad 2b < 2a < j+1
$$
Then we have:
$$
\Zq\{\voe v_2^{0,2,4,6} w_{I,\ep}\}
$$
When $j=2q+1$, the $x^1$-torsion detected by $d_{1,j}$ is represented by:
$$
\Zq[\vhat_1]\{\vhat_1 \voe  v_2^{0,2,4,6} \ww \}
\quad 
i_{2b-1} = i_{2b}  \quad 0 < b \le q
\quad 
i_j > i_{j+1}
$$
When $j=2q$, the $x^1$-torsion detected by $d_{1,j}$ is the same as
the second part of $E_{1,j+1}$ but with $i_j > i_{j+1}$.
\end{thm}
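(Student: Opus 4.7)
The plan is induction on $j$, with the base cases $1 \le j \le 4$ handled in Propositions \ref{d11a}--\ref{d14a}; these already expose the inductive pattern. For the inductive step, I would compute $d_{1,j}$ on the sub-module of $E_{1,j}$ generated by $\voe w_{I,\epsilon}$ with $\epsilon_j = 1$, show this restriction is injective modulo higher filtration, and identify $E_{1,j+1}$ as the sub-quotient on $\epsilon_j = 0$ elements modulo the image.

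A first observation is that the parity of $j$ controls which part of $E_{1,j}$ contains the sources of $d_{1,j}$. When $j = 2q+1$ is odd, the inductive description pairs $i_{j-1}$ with $i_{j-2}$ in the first part and leaves $i_j$ unpaired, consistent with the \A\ requirement $i_{j-1} > i_j$ imposed by $\epsilon_j = 1$; meanwhile in the second part the pairings $i_{2a} = i_{2a+1}$ extend up to $i_{j-1} = i_j$, which rules out $\epsilon_j = 1$. When $j = 2q$ is even, the analogous analysis places all sources in the second part. This parity dichotomy matches the two separate statements for the torsion generators in the theorem.

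In the odd case, I would apply Lemma \ref{free}. The alternating pairing structure forces any run of entries equal to $i_j + 1$ immediately preceding position $j$ to have even length $s$, so the lemma yields $d_{1,j}(\voe w_{I,\epsilon}) = v_2^{-3}\voe\vhat_1 w_{I+\Delta_j,\epsilon-\Delta_j}$, which is exactly the torsion generator form $\vhat_1\voe\ww$ with $i'_j > i'_{j+1}$ predicted by the theorem. In the resulting quotient, this relation kills the $\vhat_1$-multiples of those $\Zq[\vhat_1]$-free generators of $E_{1,j}$ with $i_j > i_{j+1}$, so these reappear in $E_{1,j+1}$ as the new second-part stratum indexed by $b = q$, while those with $i_j = i_{j+1}$ remain in the first part as the newly added pairing. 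In the even case, I would apply Lemma \ref{nonfree1} or Lemma \ref{nonfree2} according as the relevant consecutive block has odd or even length, with the smallest admissible $k$ pinned down by the distinguished index $2b+1$ in the second-part description; relation \ref{oldformula2} converts the intermediate $\vhat_1$-terms to pure $\pntr$-terms of the form $w_{I + \Delta_k + \Delta_j,\epsilon-\Delta_j}$, matching the second torsion-generator formula.

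The main obstacle will be the careful bookkeeping: verifying, case by case, that the image of $d_{1,j}$ effects exactly the structural change from $E_{1,j}$ to $E_{1,j+1}$, and that $d_{1,j}$ is injective on $W_j \cap E_{1,j}$. Injectivity follows from the fact that each lead term uniquely identifies its source via the positions of the added $\Delta_j$ and the possible extra $\Delta_k$, whose location is determined by the pairing pattern. Matching the pairings requires tracking how the new constraint $i_j = i_{j+1}$ enters the first part when $j$ is odd, how the admissible range for $b$ in the second part extends when $j$ is even, and how the strict inequality $i_j > i_{j+1}$ in the torsion generator formulas aligns with the equality $i_j = i_{j+1}$ in the surviving first-part elements. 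Once these correspondences are verified in each parity and each position of $b$, the induction closes.
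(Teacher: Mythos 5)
Your overall framework (induction on $j$, base cases from Propositions \ref{d11a}--\ref{d14a}, the parity split determining which part of $E_{1,j}$ hosts the sources, Lemma \ref{free} for $j$ odd) correctly mirrors the paper. But in the even case $j=2q$ there is a genuine gap: you treat the computation as if each source with parameter $b$ maps straightforwardly to a target with the same $b$, with ``the smallest admissible $k$ pinned down by the distinguished index $2b+1$.'' That is not what happens. As the paper's worked $b=0$ and $b=1$ analyses show, sources of parameter $b'>0$ routinely map to targets with smaller $b$ (e.g.\ an $(I,\ep)$ with $i_1=\cdots=i_{2b'+1}>i_{2b'+2}$ and $\ep_j=1$ is the \emph{source} whose differential produces the $b=0$ target with $i_1=i_2+1$), and which lemma applies (and with what $s,t,k$) depends on whether $i_{2q-1}=i_{2q}+1$ and similar marginal conditions. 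The source/target matching thus slides across strata indexed by $b$, and simply ``verifying case by case'' for every $b$ does not obviously terminate.

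The paper resolves this with a structural reduction you have not supplied: after explicitly handling $b=0$ and $b=1$ and recording what remains as sources and targets (Summaries \ref{b1}, \ref{b2}), it observes that no admissible differential from a $b\ge 2$ source can involve $i_1$ or $i_2$, so one may delete those two coordinates. The residual problem for $(n,\,j=2q,\,b\ge 2)$ is then literally identical to the already-solved problem for $(n-2,\,j-2,\,b\ge 1)$, and the induction closes by appeal to that smaller instance. Without this ``strip off $i_1,i_2$'' translation, your inner bookkeeping has no mechanism to terminate, and your injectivity argument --- ``each lead term uniquely identifies its source via the positions of the added $\Delta_j$ and $\Delta_k$'' --- begs the question, since identifying \emph{which} $k$ appears and from \emph{which} $b'$-stratum the source came is precisely the content that needs to be established.
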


\begin{remark}
It is easy enough to read off the terms in the theorem that
are in degrees $8*$.  It requires $s(\ep)$ to be even, forcing
$\voe = 1$.  Then just eliminate the $v_2^{2,6}$ as well.
To get just terms in degrees $16*$, also eliminate $v_2^4$.
All $x^i$-torsion generators inject to $E(2)^*(-)$, so we
see that the $x^1$-torsion generators of degree $8*$ inject,
giving part of theorem \ref{inj}.
\end{remark}

\begin{remark}
When $j= n = 2q+1$, the condition on the 
$\Zq[\vhat_1]$ free
part has $i_n = i_{n+1}$, but since there is no $i_{n+1}$, this
condition is never met and there is no 
$\Zq[\vhat_1]$ free part.
When $j = n = 2q$, the condition on the 
part with $\vhat_1 = 0$
has $i_n = i_{n+1}$, but since there is no $i_{n+1}$, this
condition is never met and there is no 
part with $\vhat_1 = 0$.
\end{remark}

\begin{proof}
Our proof is by induction.  We assume we have computed
$d_{1,j'}$ for $j' < j$.  We need to compute $d_{1,j}$
on $E_{1,j}$ and show our result gives $E_{1,j+1}$.
We have computed $E_{1,2}$ through $E_{1,5}$ to begin our induction.
In fact, we need the $d_{1,4}$ to ground our induction.

There are some, but not enough, easy parts to this.  First, if
$j=2q$, $d_{1,j} = 0$ on the first part because we have
$i_{j-1} = i_j$ and so $\ep_j = 0$.  Likewise, if
$j=2q+1$, 
$d_{1,j} = 0$ on the second part because we have
$i_{j-1} = i_j$ and so $\ep_j = 0$.  

When 
$j=2q+1$, computing $d_{1,j}$ on the first part is just
lemma \ref{free}.
This misses the usual $i_j = i_{j+1}$, but, because the $\vhat_1$ is
there, this creates the $b=q$ part of $E_{1,j+1}$ in the second part, the only piece of
the second part that wasn't there already in $E_{1,j}$.  
The rest of $E_{1,j}$ remains unchanged and carries over to $E_{1,j+1}$.

What remains now is to deal with $j=2q$.  The $\Zq[\vhat_1]$ free
 part of $E_{1,j}$ is
uninvolved and carries over to be exactly the same for the first part of
$E_{1,j+1}$.

In the second part of $E_{1,j}$, the range of $b$ does not change between
$E_{1,j}$ and $E_{1,j+1}$.  However, the change does allow for $a$ to
be $q$, giving $i_j = i_{j+1}$.  
We expect this and can now forget about it.
To compute $d_1$ on $\uhat_j$, we can never end up with $i_j = i_{j+1}$, which
explains how this condition comes about.
Otherwise, the descriptions of $E_{1,j}$ and $E_{1,j+1}$ are the same 
except, of course,
we end up with $\ep_j = 0$.

Let's take a look at what we have to accomplish yet.
We have to compute $d_{1,j}$ in such a way that all the $\uhat_j$
go away.
Our map $d_{1,j}$ has to take the second part of $E_{1,j}$ with
$\ep_j = 1$ and $i_{2q} \ge i_{2q+1}$ (sources) and 
put it in 1-1 correspondence with the second part of
$E_{1,j}$, with $i_{2q} > i_{2q+1}$ 
(targets) and $\ep_j = 0$.  Recall that our $2q = j$.

First let us work with the
$b = 0$ case. 
We want all $b=0$ terms with $\ep_j = 0$ and $i_j > i_{j+1}$ to
be hit as targets.  We need to find the sources to do this with.
Our sources must have $\ep_j = \ep_{2q} = 1$, so we
have $i_{2q-1} > i_{2q}$ by \A\  and the fact that $\ep_{2q-1}=0$.
We first restrict our attention to source terms with $b=0$.

We use lemma \ref{nonfree2} to get
$$
d_{1,j}(\ww) = w_{I+\Delta_1 + \Delta_j , \ep - \Delta_j}.
$$
In this application, the $t$ of lemma \ref{nonfree2} is zero
and $k=1$,
but the $s$ could range from $0$ to $j-2 = 2q-2$ (by twos)
depending
on $I$.  This hits all elements 
in $E_{1,j}$
we need to have as targets
with $b=0$ 
and $i_1 > i_2 + 1$. 

As targets, we have not yet hit the $b=0$ terms with
$i_1 = i_2 +1 $, i.e. 
$(I,\ep)$ with $\ep_j = 0$,
$i_j > i_{j+1}$ and 
$i_1 = i_2 +1 $.
The source that works 
here is $(J,r) = (I-\Delta_1 -\Delta_j,\ep+\Delta_j)$.
To see this,
recall that for $b=0$, we have $i_{2a} = i_{2a+1}$ for $0 < 2a < 2q$.
Find the $q > b'> 0$ such that 
$$
i_1 - 1 = i_2 = \cdots = i_{2b'+1} > i_{2b'+2}
$$
In almost all cases, we can apply lemma \ref{nonfree2} to $(J,r)$
to get the desired result  
using $k=1$, 
$t=0$, and $s$ can go from $0$ to
$2q -2b' -2$ by twos, depending on $I$.

There is
one place where lemma \ref{nonfree2} does not apply and
we must use lemma \ref{nonfree1}.  That is when $b' = q-1$
and
$i_{2q-1} = i_{2q} +1$. 
Here $s=2q-1$ and $t=0$.

Note that this turns a term associated with $b' > 0$ into
one with $b=0$.

For targets, we have hit all of our $b=0$, $\ep_j = 0$, $i_j > i_{j+1}$.
For sources, we have used all of $b$ with
$i_1 = \cdots = i_{2b+1}> i_{2b+2}$ and $\ep_j = 1$, $i_j \ge i_{j+1}$
for $b = 0$ to $q-1$.
Note that this includes all of the $b=0$, $\ep_j = 1$, $i_j \ge i_{j+1}$
terms.

\begin{summary}
\label{b1}
The unused terms we need as sources are
all of the $q > b \ge 1$, $\ep_j = 1$, with
$i_j \ge i_{j+1}$, excluding terms with
$$
i_1 = i_2 = \cdots = i_{2b} = i_{2b+1} > i_{2b+2}
$$
The unused terms we need as targets are $b \ge 1$,
$\ep_j = 0$, with $i_j > i_{j+1}$.
\end{summary}

We must now do $b > 0$.

Moving on, we want to find all of the $b=1$ terms as targets.
We do much
that is similar to the $b=0$ case.
We begin with source terms that also have $b=1$.
When $b=1$, we have $i_3 > i_4$, and since we have
excluded $i_1 = i_2 = i_3 > i_4$, we always have
$i_1 = i_2 > i_3 > i_4$.
Clarity is often thwarted by the necessity to handle
special cases.
We want to apply our lemmas to get
$$
d_{1,j}(\ww) = w_{I+\Delta_3 + \Delta_j , \ep - \Delta_j}.
$$
We see that this has \A\ because $i_2 > i_3$ and $i_{j-1}>i_j$.
We cannot replace $\Delta_3$ with $\Delta_1$ because that
term does not exist in $E_{1,j}$.  We cannot replace it
with $\Delta_2$ because that term does not have \A.

Generally, we can do this using lemma \ref{nonfree2} when
we are not dealing with the special cases.  In our use
we have $t=0$ or $t=2$ (if $i_2 = i_3 +1$), 
$k=3$, and $s$ can be anywhere from $0$
to $2q-4$ (by twos).  

In the special case of source with
$j=4$ and $i_1 = i_2 > i_3 = i_4+1$ and $\ep_4 = 1$,
we have to use lemma \ref{nonfree1} with $s = 1$, $k=3$,
and $t = 0$ unless $i_2 = i_3 +1$, in which case $t = 2$.

We had $i_3 > i_4$ and we added $\Delta_3$
so we missed the cases where $i_3 = i_4 +1$.
We are left with the need to hit these cases. 
Again, this is just like the $b=0$ case.
As targets, we have not yet hit the $b=1$ terms 
$(I,\ep)$ with $\ep_j = 0$,
$i_j > i_{j+1}$ and 
$i_3 = i_4 +1 $.
The source that works 
here is $(J,r) = (I-\Delta_3 -\Delta_j,\ep+\Delta_j)$.
To see this,
recall that for $b=1$, we have $i_{2a} = i_{2a+1}$ for $2 < 2a < 2q$.
Find the $q > b'> 0$ such that 
$$
i_3 - 1 = i_4 = \cdots = i_{2b'+1} > i_{2b'+2}
$$
In almost all cases, we can apply lemma \ref{nonfree2}
to get the desired result.  
using $k=3$, 
$t=0$ or $t=2$ (if $i_2 = i_3 +1$), and $s$ can go from $0$ to
$2q -2b' -2$ by twos, depending on $I$.

Of course, if $2b'  + 1 = 2q -1$ AND $i_{2q-1} = i_{2q} + 1$,
then we have to use lemma \ref{nonfree1}. 
Here we have $s = 2q -3$, $t = 0$ or $t=2$ (if $i_2 = i_3 + 1$).

We need to identify all of the targets hit so far and
all of the sources used so far.

We have hit all elements as targets with $b=0$ or $b=1$, $\ep_j = 0$
and $i_j > i_{j+1}$.

We have used all terms as sources with $b = 0$ and $b=1$ with $\ep_j = 1$
and $i_j \ge i_{j+1}$. In addition, we have used
all terms with 
$i_1 = \cdots = i_{2b'+1} > i_{2b'+2}$ for $b' > 0$
and all terms with 
$i_1 = i_2 > i_3 = \cdots = i_{2b'+1} > i_{2b'+2}$ for $b' > 1$.
Combined, that is $i_1 =i_2 \ge i_3 = \cdots = i_{2b'+1}> i_{2b'+1}$.

\begin{summary}
\label{b2}
The unused terms we need as sources are
all of the $q > b \ge 2$, $\ep_j = 1$,
$i_j \ge i_{j+1}$, excluding terms with
$$
i_1 = i_2 \ge i_3 = i_4 =   \cdots = i_{2b} = i_{2b+1} > i_{2b+2}
$$
The unused terms we need as targets are $b \ge 2$,
$\ep_j = 0$, with $i_j > i_{j+1}$.
\end{summary}

We are getting close to our induction statement
where we will set things up to do
 $d_{1,j}$ for $b \ge 2$ using the induction.

Our $d_{1,j}$ on what is left cannot involve $i_1$ or $i_2$ because
$(I+\Delta_1 + \Delta_j,\ep-\Delta_j)$ does not give
a term in $E_{1,j}$ and 
$(I+\Delta_2 + \Delta_j,\ep-\Delta_j)$ does not
have \A.

Thus, we can ignore $i_1$ and $i_2$.  What is left of $(I,\ep)$
if we remove them is an $I'$ of length $n -2$.  More
importantly, $i_j = i_{2q}$ moves down to the
new $i'_{2q-2}$ and the $b \ge 2$ condition moves
to a $b' \ge 1$ condition.

This translates our $b \ge 2$, $n$, $j=2q$ problem, summary \ref{b2}, to
our $b' \ge 1$, $n-2$, $j-2 = 2q-2$ 
problem, summary \ref{b1}.  They are identical,
so, by induction, having already solved the later problem,
we solve the present problem.

Because of the idiosyncrasies of the $b=0$ case, we couldn't
just go from $b\ge 1$ to $b' \ge 0$, but had to do the
induction from $b \ge 2$ to $b' \ge 1$.

Because we must use $b=2$ and we have $2b+2 \le j+1$
and we must have $j=2q$, 
our lowest computation here is for $E_{1,7}$, so, to
use induction, we needed to have computed our $E_{1,5}$,
which we did in the previous section.
\end{proof}

\begin{remark}
	Rather than the downward induction we have done, we could equally
	well have done an induction on $b$.
	All that would be necessary would be to replace the $2$ in \ref{b2}
	with a $k$ and do the induction on $k$.  The statement of the
	excluded terms would be a bit more complicated and showing that the
	lower $i_t$ aren't involved would also be a bit more complicated.
	But, on the whole, the argument would be roughly equivalent.
\end{remark}

\section{All the $MU(n)$ theorems}

\begin{proof}[Proofs of theorems \ref{mu} and \ref{muodd}]
We begin with $n=2q$.  
In theorem \ref{bigthm}, for the part with $\vhat_1 = 0$,
we have $i_n = i_{n+1}$, but since there is no $i_{n+1}$,
this cannot happen and there is no contribution to the
answer from this second part.  We apply equation \ref{tochern}
to the $\Zq[\vhat_1]$ free part of theorem \ref{bigthm}.
Since $s(\ep) = 0$, we have $\voe = 1$.  We get,
modulo higher filtrations,
$$
\ww = 
\chat_1^{2i_1  -2i_2 }
 \chat_2^{2i_2  -2i_3 }
 \cdots
 \chat_n^{2i_n  }
=
\Pntr_1^{i_1  -i_2 }
 \Pntr_2^{i_2  -i_3 }
 \cdots
 \Pntr_n^{i_n  }
$$
We have
$i_{2b-1} = i_{2b}$  
for all $0 < b \le q$, so we end up with
$$
\pntr_2^{i_2}
\pntr_4^{i_4}
\cdots
\pntr_{2q}^{i_{2q}}.
$$
Of course, \A\ requires that $i_{2q} > 0$.
This gives us the $E_2$ of theorem \ref{mu}.

Moving on to $d_3$, because there is no $\uhat^\ep$
anymore and all the $\pntr_k$ are permanent cycles,
all of our $\ww$ for $E_2$ are permanent cycles.
Our entire $d_3$ is given by what happens on the
coefficient ring.
Using remark \ref{coef}, $d_3(v_2^2) = \vhat_1 v_2^{-4}$,
we get the $E_4$ term and the $x^3$-torsion generators.
The differential $d_7$ is again all on the coefficients
so we have $d_7(v_2^4) = \vhat_2 v_2^{-8} = 1$, and
we our $x^7$-torsion generators.

The proof for the $n=2q+1$ case is a bit different.
We can eliminate the $\Zq[\vhat_1]$ free part from
consideration because it requires $i_n = i_{n+1}$ and
there is no $i_{n+1}$.
We also have $\voe = 1$.
The reduction to Pontryagin classes is the same idea,
but our differential on the coefficients
$d_3(v_2^2) = \vhat_1 v_2^{-4}$
gives us a $\vhat_1$ that we don't have.  In our
$\ww$ we want to apply our usual relation \ref{oldformula2},
but if we do that, we must be sure that the resulting
$w_{I+\Delta_k,0}$ exists.  If $i_{2b} > i_{2b+1}$ we
can just use $\vhat_1 \pntr_{2b+1}^{i_{2b+1}} =
 \pntr_{2b+1}^{i_{2b+1}+1}$.  Anything lower than that
does not exist.
If, however,  
 $i_{2b} = i_{2b+1}$, we
cannot do that but we can use
$\vhat_1 \pntr_{2b'+1}^{i_{2b'+1}} =
 \pntr_{2b'+1}^{i_{2b'+1}+1}$  
where we have $b'$ is the smallest number with
$i_{2b'+1} = \cdots = i_{2b+1}$.  
This has \A\ and takes an element of type $b$
to one of type $b'$.
This allows us to hit all elements except when $b = q$ and
$i_{2q+1} = 1$.
This gives us both our $x^3$-torsion description and our
$E_4$
term of theorem \ref{muodd}.
There is no mystery now to 
$d_7$ or the
$x^7$-torsion. 
This is just computed on the coefficients as with $n = 2q$.
\end{proof}

\begin{remark}
	All the terms in the theorems that are in degrees
	$8*$ can be found just by eliminating the $v_2^{2,6}$.
	To see degrees $16*$, eliminate the $v_2^4$ as well.
	All $x^3$-torsion generators are in degrees $8*$
	and the $x^7$-torsion generators are in degrees $16*$.
	Since $x^i$-generators inject to $E(2)^*(-)$, this
	concludes the proof of theorem \ref{inj}.
\end{remark}

All that remains is to give a more $MU(n)$ associated
description of the $x^1$-torsion generators.  They are
all recoverable from theorem \ref{bigthm} where they
are written in terms of symmetric functions.  Here,
we rewrite this in terms of Pontryagin and Chern
classes to give it more the look of $MU(n)$.
Again, we rely on equation \ref{tochern}.
We can just read this off from \ref{bigthm}.

Recall from lemma \ref{parity} that when we write
our elements in terms of Chern classes, our
$\voe$ is determined by the parity of $j_1 + j_3 + j_5 + \cdots$,
for $\chat^J$.

\begin{thm}
\label{mux1}
Representatives for the $x^1$-torsion generators in
the associated graded object for $ER(2)^*(MU(n))$
start with:
$$
\Zp[\vhat_1]
[[\chat_1,\chat_2,\ldots,\chat_{n}]]
\{2 \voe v_2^{0,2,4,6} \chat_n \}
\iso
\Zp[\vhat_1]
[[\chat_1,\chat_2,\ldots,\chat_{n}]]
\{\voe \alpha_i \chat_n \}
\quad 0 \le i < 4
$$
For $ 1 \le j = 2b+1 \le n$, we have
$$
\Zq[\vhat_1][[\Pntr_2^{i_2}, \Pntr_4^{i_4},
\cdots, \Pntr_{2b}^{i_{2b}},
\Pntr_j^{i_j}, \chat_{j+1}^{i_{j+1}},
\chat_{j+2}^{i_{j+2}},
\cdots,
\chat_n^{i_n}]]\{\voe v_2^{0,2,4,6} \vhat_1 \Pntr_j \chat_n\}
$$ 
except when $j = n$, then we do not need the $\chat_n$ at
the end.
The parity that determines $\voe$ is the parity of
$j_{2b+3} + j_{2b+5} + j_{2b+7} + \cdots$.

For $0 \le 2b  < j = 2q \le n$ we get
{\small
$$
\Zq[\Pntr_2^{i_2}, \Pntr_4^{i_4},
\cdots, \Pntr_{2b}^{i_{2b}},
\Pntr_{2b+1}^{i_{2b+1}},
\Pntr_{2b+3}^{i_{2b+3}},
\cdots,
\Pntr_{j-1}^{i_{j-1}}, 
\Pntr_{j}^{i_{j}}, 
\chat_{j+1}^{i_{j+1}},
\chat_{j+2}^{i_{j+2}},
\cdots,
\chat_n^{i_n}]\{\voe v_2^{0,2,4,6} \Pntr_{2b+1} \Pntr_{j} \chat_n\}
$$}
\noindent
\hspace{-7pt}
except when $j = n$, then we do not need the $\chat_n$ at
the end.
The parity that determines $\voe$ is the parity of
$j_{2q+3} + j_{2q+3} + j_{2q+5} + \cdots$.
\end{thm}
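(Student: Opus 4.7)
The plan is to treat this result as a translation of Theorem~\ref{bigthm} from the symmetric-function language of $w_{I,\epsilon}\in E(2)^*(\Smash^n\cp)$ back into the Chern/Pontryagin-class language intrinsic to $MU(n)$. The only genuinely new ingredients we need are equation~\ref{tochern}, which rewrites each $\ww$ modulo higher filtration as a monomial in the $\chat_k$ with exponents $2i_k+\ep_k-2i_{k+1}-\ep_{k+1}$, together with Lemma~\ref{parity}, which tracks the parity of $s(\ep)$ (and hence the factor $\voe$) directly from the exponents in the Chern monomial. I would organize the proof into three blocks, one per family of generators listed in the statement.

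First I would dispose of the leading family $\Zp[\vhat_1][[\chat_1,\dots,\chat_n]]\{2\voe v_2^{0,2,4,6}\chat_n\}$. These are precisely the $x^1$-torsion generators detected by $d_{1,0}$, described at the end of Section~\ref{transition} as $\Zp[\vhat_1]\{2\voe v_2^{0,2,4,6}\ww\}$ ranging over \emph{all} $\ww$ with property~\A; since the $\ww$'s generate the image of $E(2)^*(MU(n))\subset E(2)^*(\Smash^n\cp)$ and this image is $E(2)^*[[\chat_1,\dots,\chat_n]]\{\chat_n\}$, the reformulation is immediate, with the Lemma~\ref{parity} parity rule giving $\voe$.

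Next I would translate the odd case $j=2b+1$. Here Theorem~\ref{bigthm} produces generators of the form $\vhat_1\voe v_2^{0,2,4,6}\ww$ with $i_{2c-1}=i_{2c}$ for $0<c\le q$ and $i_j>i_{j+1}$. Substituting equation~\ref{tochern} makes the exponent of every odd-indexed $\chat_{2c-1}$ with $2c-1<j$ equal to zero, while each $\chat_{2c}$ acquires an even exponent $2(i_{2c}-i_{2c+1})$, which modulo higher filtration is $\pm\Pntr_{2c}^{i_{2c}-i_{2c+1}}$; the factor coming from $\chat_j=\chat_{2b+1}$ likewise has even exponent $2(i_j-i_{j+1})\ge 2$, which we rewrite as $\Pntr_j^{i_j-i_{j+1}}$, producing the required minimum factor $\Pntr_j$. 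The remaining indices $k>j$ contribute genuine $\chat_k$-factors (odd exponents are now allowed since $\ep_k$ for $k>j$ is unconstrained), and the unrestricted ranges of $i_1,i_3,\dots,i_{2b-1}$ (which are free to equal $i_2,i_4,\dots,i_{2b}$) and of the block-differences $i_{2c}-i_{2c+1}$ match the power-series structure in $\Pntr_{2c}$. The parity of $s(\ep)$ is then controlled by Lemma~\ref{parity}: all $\Pntr_{2c}$ contribute $0$ to $j_1+j_3+j_5+\cdots$, and $\Pntr_{2b+1}$ contributes an even amount (since $\Pntr_{2b+1}\sim\chat_{2b+1}^2$ modulo higher filtration), so the parity is determined entirely by $j_{2b+3}+j_{2b+5}+\cdots$ as claimed.

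The even case $j=2q$ proceeds identically, except that the $x^1$-torsion now comes from the second (non-$\vhat_1$-free) part of $E_{1,j+1}$; the same substitution via equation~\ref{tochern} plus the identification $\chat_{2c}^{2m}\sim\pm\Pntr_{2c}^m$ converts the constraints $i_{2c-1}=i_{2c}$ for $c\le b$ and $i_{2a}=i_{2a+1}$ for $b<a<q$ into a polynomial ring on $\Pntr_2,\Pntr_4,\dots,\Pntr_{2b},\Pntr_{2b+1},\Pntr_{2b+3},\dots,\Pntr_{j-1},\Pntr_j$, with $\Pntr_{2b+1}$ appearing because $i_{2b+1}>i_{2b+2}$ and $\Pntr_j$ appearing because $i_j>i_{j+1}$; the remaining $\chat_{j+1},\dots,\chat_n$ factors and the $\voe$ parity follow exactly as before. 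The boundary cases $j=n$ (where the trailing $\chat_n$ is already covered by $\Pntr_n$ or $\chat_n$ in the ring itself) are handled by observing that the generator $\chat_n$ of the free module coincides with the only remaining top factor. The principal obstacle throughout is purely bookkeeping: checking that the inequalities from \A{} make the block-difference exponents range over exactly the non-negative integers, and that Lemma~\ref{parity} correctly reduces to the stated odd-indexed sums once the $\Pntr_k$'s have been absorbed.
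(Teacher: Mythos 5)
Your overall strategy — translate Theorem~\ref{bigthm} via equation~\ref{tochern} and Lemma~\ref{parity} — is precisely what the paper does (the paper's proof is essentially a one-line pointer to ``read this off from~\ref{bigthm}''), so the approach is the same. However, there is a concrete error in the translation that is not merely cosmetic.

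You assert that the factor coming from $\chat_j = \chat_{2b+1}$ ``likewise has even exponent $2(i_j - i_{j+1}) \ge 2$, which we rewrite as $\Pntr_j^{i_j - i_{j+1}}$.'' That is not what equation~\ref{tochern} gives. The exponent of $\chat_j$ is
$$
2i_j + \ep_j - 2i_{j+1} - \ep_{j+1} \;=\; 2(i_j - i_{j+1}) - \ep_{j+1},
$$
using $\ep_j = 0$. This is \emph{odd} whenever $\ep_{j+1} = 1$. For the indices $k < j$ your argument is sound, because there both $\ep_k$ and $\ep_{k+1}$ vanish (this is exactly the remark the paper makes immediately after equation~\ref{tochern}). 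But Theorem~\ref{bigthm} imposes $\ep_k = 0$ only for $k \le j$; it places no constraint on $\ep_{j+1}$, so $\ep_{j+1} = 1$ genuinely occurs among the $x^1$-torsion representatives detected by $d_{1,j}$. In the $\Pntr^K\chat^{\,r}$ basis, the transition formulas in Section~\ref{transition} give $r_j = (\ep_j + \ep_{j+1}) \bmod 2 = \ep_{j+1}$, so a $\chat_j$ factor with exponent~$1$ survives and cannot be absorbed into $\Pntr_j$. A concrete instance: for $n = 2$, $j = 1$, $b = 0$, the lowest-filtration $x^1$-torsion representative from $d_{1,1}$ is $\vhat_1\, w_{(1,0),(0,1)}$, which equation~\ref{tochern} converts to $\vhat_1 \chat_1 \chat_2$ — an odd $\chat_1$-power, not lying in $\Zq[\vhat_1][[\Pntr_1,\chat_2]]\{\vhat_1\Pntr_1\chat_2\}$.

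The error propagates to your parity argument. You argue $\Pntr_{2b+1}$ contributes an even amount to $j_1 + j_3 + \cdots$, hence the parity of $s(\ep)$ is governed by $j_{2b+3}+j_{2b+5}+\cdots$. Once $j_j$ can be odd, the $j_j$-term contributes the parity $\ep_{j+1}$ and the stated reduction of Lemma~\ref{parity} is no longer immediate. The even-$j$ case has the identical gap at the top index $j = 2q$. To repair the argument you would either need to show $\ep_{j+1} = 0$ holds on the torsion representatives (which Theorem~\ref{bigthm} does not assert), or carry a $\chat_j^{r_j}$ factor with $r_j \in \{0,1\}$ explicitly through the change of basis and verify that it reproduces the stated module and parity rule.
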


\begin{remark}
To get the $x^1$-torsion generators in degrees $8*$, we have
to have $\voe = 1$ and we only use $v_2^{0,4}$.  For degrees
$16*$, we must have $\voe =1$ and no powers of $v_2$.
\end{remark}


\end{document}